\newcommand{\Gal}{\operatorname{Gal}}
\newcommand{\p}{\mathfrak{p}}
\newcommand{\Fp}{\mathbf{F}_{\mathfrak{p}}}
\newcommand{\OK}{\mathcal{O}_{K}}
\newcommand{\Fl}{\mathbf{F}_{\ell}}
\newcommand{\Aut}{\operatorname{Aut}}
\newcommand{\im}{\operatorname{im}}
\newcommand{\Z}{\mathbb{Z}}
\newcommand{\F}{\mathbb{F}}
\newcommand{\FF}{\mathbb{F}}
\newcommand{\Q}{\mathbb{Q}}
\newcommand{\OO}{\operatorname{O}}
\newcommand{\SO}{\operatorname{SO}}
\newcommand{\GSp}{\operatorname{GSp}}
\newcommand{\PSL}{\operatorname{PSL}}
\newcommand{\PGL}{\operatorname{PGL}}
\newcommand{\PSU}{\operatorname{PSU}}
\newcommand{\SU}{\operatorname{SU}}
\newcommand{\Sp}{\operatorname{Sp}}
\newcommand{\GL}{\operatorname{GL}}
\newcommand{\SL}{\operatorname{SL}}
\newcommand{\Sym}{\operatorname{Sym}}
\newcommand{\C}{\mathbb{C}}
\newcommand{\GG}{\mathbf{G}}
\newcommand{\St}{\mathsf{St}}
\newcommand{\tors}{\mathsf{tors}}
\newcommand{\AGL}{\operatorname{AGL}}
\newcommand{\AGGL}{\operatorname{A\Gamma L}}
\newcommand{\ASL}{\operatorname{ASL}}
\newcommand{\bp}{\begin{proof}}
\newcommand{\enp}{\end{proof}}
\theoremstyle{plain}
\newtheorem{thm}[equation]{Theorem}
\newtheorem{lem}[equation]{Lemma}
\newtheorem{cor}[equation]{Corollary}
\newtheorem{prop}[equation]{Proposition}
\newtheorem{question}[equation]{Question}
\theoremstyle{remark}
\newtheorem{rmk}[equation]{Remark}
\def\d12{{_{12}}}
\def\acf{{algebraically closed field }}
\newcommand{\up}{^{-1} }
\newcommand{\om}{\omega }
\newcommand{\si}{\sigma }
\newcommand{\al}{\alpha }
\newcommand{\be}{\beta }
\newcommand{\med}{\medskip }
\newcommand{\lam}{\lambda}
\newcommand{\ep}{\varepsilon}
\newcommand{\ld}{,\dots,}
\newcommand{\lan}{ \langle }
\newcommand{\ran}{ \rangle }
\newcommand{\Id}{\mathop{\rm Id}\nolimits}
\def\ei{{eigenvalue }}
\def\ei{{eigenvalue }}
\def\f{{following }}
\def\ii{{if and only if }}
\def\ir{{irreducible }}
\def\irr{{irreducible representation }}
\def\itf{{It follows that }}
\def\mult{{multiplicity }}
\def\hw{{highest weight }}
\def\rep{{representation }}
\def\reps{{representations }}
\newcommand{\diag}{\mathop{\rm diag}\nolimits}
\newcommand{\bl}{\begin{lem}\label}
\newcommand{\el}{\end{lem}}
\numberwithin{equation}{section}
\begin{document}

\title{Unisingular Representations in Arithmetic and Lie Theory}

\author{John Cullinan}
\address{Department of Mathematics, Bard College, Annandale-On-Hudson, NY 12504, USA}
\email{cullinan@bard.edu}
\urladdr{\url{http://faculty.bard.edu/cullinan/}}

\author{Alexandre Zalesski}
\address{Department of Physics, Mathematics and Informatics, National Academy of Sciences of Belarus,
66 Prospekt  Nezavisimosti, Minsk, Belarus}
\email{alexandre.zalesski@gmail.com}

\begin{abstract}
Let $G$ be a subgroup of $\GL(V)$, where $V$ is finite dimensional vector space over a finite field of characteristic $p >0$.  If $\det(g-1) = 0$ for all $g \in G$ then we call $G$ a \emph{fixed-point subgroup} of $\GL(V)$. Motivated in parallel by questions in arithmetic and linear group theory, we  classify all irreducible fixed-point subgroups of $\Sp_8(2)$ and give new infinite series of irreducible fixed-point subgroups of symplectic groups $\Sp_m(2)$ for various $m$ arising from certain representations of groups of Lie type.
\end{abstract}

\subjclass[2000]{11G10, 11F80, 20C20, 20C33, 20H30} \keywords{abelian varieties, torsion points, Galois representations, finite linear groups; finite group representations, symplectic groups, eigenvalue 1, fixed points}
\maketitle

\begin{center}
\emph{Dedicated to the memory of James Humphreys} \\
\end{center}

\section{Introduction}

Let $G$ be a finite group. If
$$
\varphi: G \to \GL(V)
$$
is a representation of $G$ such that $\det(\varphi(g) - 1) =0$ for all $g \in G$, then we call $\varphi$ \emph{unisingular} and $\varphi(G)$ a \emph{fixed-point subgroup} of $\GL(V)$.  Unisingular representations are of significant interest for group \rep theory (see, for example, \cite{GT}, \cite{z09}, \cite{z18} and \cite{z20})
and also arise naturally in arithmetic in counting points on varieties over finite fields.  More precisely, we are  motivated by the following setup that we now describe in detail.

Fix positive integers $m \geq 2$ and $d \geq 1$.  Let $A$ be an abelian variety of dimension $d$ defined over a number field $K$ and let $\p \subseteq \OK$ denote a nonzero prime of good reduction for $A$.  We write $A(K)$ for the Mordell-Weil group of $A$ and, by abuse of notation, $A(\Fp)$ for the $\Fp$-points of $A$ mod $\p$, where $\Fp$ denotes the residue field $\OK/\p$.  It is well-known that $A(K)$ is a finitely-generated abelian group and that $A(\Fp)$ is a finite abelian group.  Moreover, given any $P \in A(K)$ the reduction-mod-$\p$ map $\pi_\p$ produces a point $\pi_\p(P) \in A(\Fp)$.  However, the map $\pi_\p$ is not necessarily surjective.

Since $A(K)$ is finitely-generated, we may write $A(K) = A(K)_{\tors} \times A(K)_{\textsf{free}}$ as the decomposition into its torsion and free subgroups.  Suppose additionally that $A(K)_{\tors}$ has a subgroup $S$ of order $m$.  Then for all $\p$ of residue characteristic coprime to $m$, $A(\Fp)$ will have a subgroup of order $m$ as well, since $S$ will survive modulo such primes (see \cite[Appendix]{katz} for background on reduction modulo $\p$ and proofs of these assertions). Thus, we have an easy ``global-to-local'' principle: \\

\begin{quote}\emph{If the order of $A(K)_\tors$ is divisible by $m$, then the order of $A(\Fp)$ is divisible by $m$ for all but finitely many $\p \subseteq \OK$.}
\end{quote}

The interesting question is whether there exists a converse local-to-global principle.  Interpreted literally, it is easy to come up with examples of elliptic curves $E$ over $\Q$ that have a point of order $m$ modulo all but finitely many primes $p$, but do not have a point of over $m$ defined over $\Q$.  For example, the elliptic curve \textsf{11.a1} of the \textsf{LMFDB} \cite{lmfdb} has trivial Mordell-Weil group over $\Q$ but has a point of order 5 modulo every $p \ne 11$. The more refined local-to-global question, originally posed by Lang, is the following.

\begin{question} \label{lq}
Fix a positive integer $m \geq 2$ and let $A$ be an abelian variety defined over a number field $K$.  Suppose for all but finitely many primes $\p$ of $K$ the orders of the groups $A(\Fp)$ are all divisible by $m$.  Does there exist a $K$-isogenous $A'$ such that $m$ divides the order of $A'(K)_\tors$?
\end{question}

Lang's question reflects the fact if $A/K$ is an abelian variety such that $A(\Fp)$ has a subgroup of order $m$, then so does every member $A'$ of the $K$-isogeny class $I_{K,A}$ of $A$.  In \cite{katz}, Katz showed that the answer to this Question \ref{lq} -- \textsf{Yes} or \textsf{No} -- can be studied entirely via group theory.  In the special case where $m$ is a prime number, it amounts to classifying certain unisingular representations.  When $m$ is composite, the group-theoretic reformulation is more complicated, and we do not address it in this paper, but it is still group theory nonetheless.

Before transitioning entirely to group theory for the remainder of the paper, we review what is known from the point of view of abelian varieties.  In the table below we classify our answers of \textsf{Yes} or \textsf{No} by both the dimension $d$ of the abelian variety $A$ and the modulus $m$.  An answer of \textsf{Yes} for  fixed   $d$ and   $m$ means that, for every abelian variety $A$ of dimension $d$ defined over any number field $K$ for which   $A_\p(\Fp) $ has a subgroup  of order $m$, 
the $K$-isogeny class of $A$ contains an abelian variety $A'$ (possibly $A$ itself) such that $A'$ has a $K$-rational subgroup of order $m$. An answer of \textsf{No} means that there exists a number field $K$ and a $K$-isogeny class $I$ of abelian varieties defined over $K$ such that $m$ divides the order of $A_\p(\Fp) $ for every element $A$ of $I$, but no element of $I$ has a $K$-rational subgroup of order $m$.  For more details on the arithmetic consequences of the following table, see \cite[Introduction]{cy}.

\med
\begin{center}
\begin{tabular}{|c|c|l|c|}
\hline
$\dim A$ & answer & modulus  $m$ & Reference\\
\hline
1 & \textsf{Yes}  & all $m \geq 2$ &\cite{katz} \\
\hline
2 & \textsf{Yes}  & all prime $m \geq 2$ & \cite{katz} \\
& \textsf{No}  & all prime powers $m = \ell^n \geq 4$ & \cite{cy} \\
\hline
3 & \textsf{Yes} & $m=2$ & \cite{2tors} \\
& \textsf{No} & all $m\geq 3$ & \cite{katz} \\
\hline
$\geq 4$ & \textsf{No} & all $m\geq 2$ & \cite{2tors, katz}\\
\hline
\end{tabular}
\end{center}
\med

 It is the answers of \textsf{No} that interest us, both from the point of view of number theory and group theory.   From the number theory perspective, it would be interesting to \emph{construct} the abelian varieties over small-degree number fields that violate this local-to-global principle, since they exhibit non-generic properties.  We remark that all known instances of the answer \textsf{No} are non-constructive and are existence proofs only.  See Corollary \ref{avcor} below for an instance of this.

We highlight one of these cases as the main motivating example for this paper.  In arithmetic language, the answer \textsf{No} in dimension 4 and modulus 2 means that there exists, for example, an abelian fourfold over a number field that has an even number of points modulo all but finitely many $\p$, but does not have a global point of order 2.  This is in contrast to, say, the same setup for elliptic curves (\emph{i.e.}~$\dim A = 1$) where it is an easy exercise in Galois theory to show that the answer is \textsf{Yes} for the modulus $m=2$.

We now turn to the group theoretic perspective and present the version of Katz' reformulation for prime moduli that motivates us; see \cite{katz} for the details of the reformulation.

\begin{question} \label{kq} \cite[Problem II, p.483]{katz}
Let $\ell \geq 2$ be prime number.  Let $\overline{\rho}_\ell:\Gal(\overline{K}/K) \to \Aut(T_\ell A \otimes \Fl)$ be the mod $\ell$ representation.
  Suppose $\det(g-1) =0$ for all $g \in \Gal(\overline{K}/K)$.  Does the composition series of $\overline{\rho}_\ell$ contain the trivial representation?
\end{question}

Upon choosing a basis for $T_\ell A \otimes \Fl$ and invoking the Galois-equivariance of the symplectic Weil pairing on $T_\ell A$, one obtains a purely group-theoretic version of Question \ref{kq} (see \cite[Introduction]{cy} for details).  We switch from $\ell$ to $p$ for the underlying prime modulus to coincide with more standard notation from finite group theory.  This is the now the question we take up in this paper.

\begin{question}\label{czq}
Let $p$ be a prime number and $G$ a subgroup of $\Sp_{2d}(p)$.  Suppose that  $\det(g-1) = 0$ for all $g \in G$, that is, G is fixed point.  Does the Jordan-H\"older series of $\F_pG$ contain the trivial representation?
\end{question}

We note that Question \ref{czq} is equivalent to Question \ref{lq} when $m=p$ is prime and $d=\dim A$ in the sense that one has a positive answer if and only if the other does.   We now recall what is known from the point of view of group theory.

In \cite{katz}, Katz showed that when $d=1$ and $d=2$, the answer to Question \ref{czq} is \textsf{Yes}
for every group $G$. For every $d \geq 3$ and $p$  odd  he also provided  examples of  groups $G$ with answer \textsf{No}.  
In \cite{sp6} and \cite{smallchar} the first-named author classified the groups $G \subset \Sp_6(p)$ for which the answer to Question \ref{czq} is \textsf{No};   all such groups $G$ act reducibly on the underlying symplectic space.  In \cite{2tors} he further showed that the answer to Question \ref{czq} is \textsf{Yes} when $d=3$ and $p=2$ (for every group) and \textsf{No} when $d \geq 4$ and $p=2$ for some $G$.  Therefore, the rough answer -- \textsf{Yes} or \textsf{No} -- to Question \ref{czq} is now known for every $d \geq 1$ and every prime number $p$.

The problem remains to classify the groups $G$ giving an answer of \textsf{No}.    This is an interesting problem from the point of view of group theory and representation theory (see \cite{z18,GT,z20}, for example) and from the point of view of arithmetic as well.  In terms of arithmetic, an irreducible unisingular symplectic representation of degree $2d$ is attached to a simple abelian variety of dimension $d$.  The irreducibility of the representation means the answer to Question \ref{czq} is \textsf{No} and that there is a simple abelian variety $A$ of dimension $d$ for which the answer to Question \ref{lq} is \textsf{No} for the modulus $m=p$.

To discuss the classification problem stated above we first mention the  \f trivial fact:

 \bl{tf1} Let $V_i$ $(i=1,2)$ be a symplectic spaces and $G_i\subset \Sp(V_i)$ subgroups
 with no trivial composition factor.  Let $V=V_1\oplus V_2$
 be a symplectic space such that $V_1,V_2$ are mutually orthogonal. Suppose that $G_1$ is a fixed-point subgroup of $\Sp(V_1)$. Then $G=G_1\times G_2$ is a fixed-point subgroup of $\Sp(V)$ with no trivial composition factor.
\el

As $G_2$ does not need to be a fixed-point group, Lemma \ref{tf1} shows that the problem of obtaining a full classification of fixed-point subgroups $G\subset \Sp(V)$ for arbitrary $V$ is intractable. To avoid this difficulty 
 it seems to be reasonable to turn to classification of {\it maximal}\,
fixed-point subgroups (with no trivial composition factor). The bulk of this problem is then to classify
the \ir maximal fixed-point subgroups. Note however that no reduction of the general case to irreducible groups is available.

In fact, Katz \cite[pp. $500-501$]{katz} provided his \textsf{No} example of a fixed-point abelian subgroup of $\Sp_{2d}(p)$, $p$ odd, $d\geq 3$ without a trivial factor
on the basis of Lemma \ref{tf1} without stating it explicitly. In his example $|G_1|=4$ and $|G_2|=2$.  In the case of $p=2$ we give a similar example with $|G_1|=9,|G_2|=3$ (Corollary \ref{se1}). 

Let $q$ be a prime power.  One can also ask more generally for a classification of all fixed-point groups $G \subseteq \GL_d(q)$ as $d$ and $q$ vary, not just restricting to symplectic groups or prime fields.  Toward this end, for $d=2$  such groups are always reducible, see  \cite{katz} or \cite[Ch.I, \S 1, Exercise 1]{serre}.  For $d=3$ the full classification of fixed-point subgroups of $\GL_3(q)$, where $q$ is any prime power, appears in \cite{gl3}. Note that the group $\SO_{3}(q)$
  is an irreducible fixed-point subgroup of $\GL_3(q)$ whenever $q$ is odd, but when $q$ is even, every fixed-point subgroup of $\GL_3(q)$ is reducible. More generally, if $d$ is odd then $\SO_{d}(q)$
  is a fixed point \ir subgroup of $\GL_d(q)$. (This is well known, see for instance \cite[Lemma 2.27(1)]{VZ} for a straightforward proof; more conceptually, this also follows from the fact that the natural \rep of  the simple algebraic group of type $B_n$, $n\geq 1,$ has weight zero.  This final observation is in line with the work we do in Section 6 below.)

\med
Duly motivated, we now present our main theorems.  First, we determine all  irreducible, fixed-point subgroups of $\Sp_8(2)$.

\begin{thm} \label{cmain}
Let $G$ be a maximal  irreducible fixed-point subgroup of $\Sp_8(2)$.  Then $G$ is conjugate to   
$ {\rm L}_3(2):2$ or to ${\rm AGL_2}(3)\cong \PSU_3(2):S_3$. 
Furthermore, the \ir subgroups of these groups are $ {\rm L}_3(2)$ in $ {\rm L}_3(2):2$
and $\PSU_3(2)$, $\PSU_3(2):2$, $\PSU_3(2):3$, and $\AGL_1(9)$ in $\AGL_2(3)$.\end{thm}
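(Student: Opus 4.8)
The plan is to classify the irreducible fixed-point subgroups $G\subseteq\Sp_8(2)=\Sp(V)$, $V=\F_2^8$, by analysing the generalised Fitting subgroup $F^*(G)$. Two reductions come first. Since the trivial module is the only irreducible $\F_2$-module of a $2$-group, $O_2(G)$ acts trivially on $V$, so $O_2(G)=1$ and $F^*(G)=E(G)O_{2'}(G)$ with $C_G(F^*(G))\subseteq F^*(G)$. Secondly, for $g=su$ the multiplicative Jordan decomposition ($s$ of odd order, $u$ a $2$-element), the eigenvalue-$1$ space of $g$ equals that of $s$; hence $G$ is a fixed-point subgroup if and only if every element of odd order of $G$ fixes a nonzero vector. (In particular $G\ne\Sp_8(2)$, since a Sylow-$17$ element of $\Sp_8(2)$ acts irreducibly on $V$ and so fixes no nonzero vector.) I then split into the component case $E(G)\ne1$ and the affine case $E(G)=1$.

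In the component case the components of $G$ are simple; since each needs a faithful $\overline{\F}_2$-module of dimension $\ge 2$ and no simple group has a faithful $2$-dimensional $\F_2$-module, the dimension $8=2^3$ cannot be built from two or three such factors over $\F_2$, so $E(G)=S$ is a single simple group with $V$ affording a faithful $8$-dimensional, or a sum of $G$-conjugate $4$-dimensional, $\F_2$-module(s). Running through the resulting short list of simple groups ($\Sp_8(2)$, $\OO_8^\pm(2)$, $\Sp_4(4)$, $\SU_4(2)$, $\SL_2(16)$, $\mathrm A_5,\dots,\mathrm A_{10}$, $\mathrm L_2(8)$, $\mathrm L_3(2)$, from the modular Atlas), I exhibit for each one other than $\mathrm L_3(2)$ an odd-order element fixing no nonzero vector on the relevant module: a regular semisimple element of order $q^{n/2}+1$ or $q-1$ in the classical groups; $9$-cycles on the deleted permutation modules, and suitable odd-order elements on any remaining $8$-dimensional constituents, for $\mathrm A_9,\mathrm A_{10}$; elements of order $5$ on the $4$- and $8$-dimensional modules of $\mathrm A_5,\dots,\mathrm A_8$; an element of order $9$ for $\mathrm L_2(8)$. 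The survivor is $S=\mathrm L_3(2)$ on its $8$-dimensional Steinberg module $\St$, where an element of order $3$ or $7$ fixes a $2$-dimensional subspace (read off from the Steinberg Brauer character), so $G$ is a fixed-point subgroup. As $\St$ is absolutely irreducible, $C_{\Sp_8(2)}(\mathrm L_3(2))=1$ and the unique outer automorphism of $\mathrm L_3(2)$ is realised inside $\Sp_8(2)$; hence $G$ is $\mathrm L_3(2)$ or $\mathrm L_3(2){:}2$, both fixed-point.

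In the affine case $N:=O_{2'}(G)\ne1$ is a nilpotent $2'$-group with $C_G(N)=N$, and $V|_N$ is faithful, semisimple and has no trivial constituent (otherwise the trivial isotypic part would be a proper $G$-submodule). The constituents are pairwise $G$-conjugate, hence of one common $\F_2$-dimension $e=\operatorname{ord}_2(d)$, where $d>1$ is the order of a faithful character of a cyclic section of $N$; as $e\mid 8$ one has $d\mid 2^8-1=3\cdot5\cdot17$. If $5$ or $17$ divided $d$ then $e\ge4$, so there are at most two constituents and $N$ embeds into a product of at most two cyclic groups; but then either $N$ is cyclic (and a generator is fixed-point-free) or a short count produces an element of order $5$ or $17$ lying in none of the constituent kernels, hence acting with only primitive-root eigenvalues — fixed-point-free, a contradiction. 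So $d=3$, $e=2$: $N$ is abelian of exponent $3$, $V|_N$ is a sum of four $2$-dimensional constituents, and their four kernels are hyperplanes of $N$ with trivial intersection. A covering count (four hyperplanes of $\F_3^k$ with trivial intersection never cover $\F_3^k$ when $k\ge3$) forces $N\cong3^2$. Then $G/N\hookrightarrow\Aut(N)=\GL_2(3)$ and $V$ is the nontrivial summand of the permutation module of the $2$-transitive group $N{:}(G/N)$ on the nine points of $N$; a conjugacy analysis inside $N_{\Sp_8(2)}(3^2)$ then places $G$ in a conjugate of the holomorph $3^2{:}\GL_2(3)=\AGL_2(3)$, which is irreducible ($2$-transitive on nine points) and fixed-point (the semisimple part of any odd-order element of $\AGL_2(3)$ is a translation in $3^2$, with a $2$-dimensional fixed space). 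Finally $\AGL_2(3)\cong\PSU_3(2){:}S_3$: the normal quaternion subgroup $Q_8\le\SL_2(3)\trianglelefteq\GL_2(3)$ acts fixed-point-freely on $3^2$, so $3^2{:}Q_8\cong\PSU_3(2)$ and $\GL_2(3)/Q_8\cong S_3$.

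It remains to assemble the classification. Since $\mathrm L_3(2){:}2$ is almost simple and $\AGL_2(3)$ is solvable, neither contains the other, and by the case division every irreducible fixed-point subgroup of $\Sp_8(2)$ lies in a conjugate of one of them, so these are exactly the maximal ones. For the subgroup lists: an irreducible subgroup of $\mathrm L_3(2){:}2$ on $\St$ requires a faithful $8$-dimensional $\F_2$-irreducible, but the proper subgroups of $\mathrm L_3(2)$ (the two classes of $S_4$ and $7{:}3$) have no $\F_2$-irreducible of dimension $>3$, so $\mathrm L_3(2)$ is the only proper irreducible subgroup; and one checks (going through the subgroup lattice) that the irreducible subgroups of $\AGL_2(3)$ are precisely the five $2$-transitive ones — equivalently, those $H$ whose point stabiliser is transitive on $\F_3^2\setminus\{0\}$, the transitive subgroups of $\GL_2(3)$ there being $C_8$, $Q_8$, $\mathrm{SD}_{16}$, $\SL_2(3)$, $\GL_2(3)$ — giving $\AGL_1(9)$, $\PSU_3(2)$, $\PSU_3(2){:}2$, $\PSU_3(2){:}3$ and $\AGL_2(3)$, each fixed-point as a subgroup of the fixed-point group $\AGL_2(3)$. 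The delicate part throughout is the affine case — eliminating the elementary abelian groups of rank $\ge3$ and the multiplicity and imprimitivity configurations over $3^2$, and establishing the conjugacy statement for subgroups of $N_{\Sp_8(2)}(3^2)$; this, together with the verification that the surviving modules are symplectic rather than orthogonal, is most safely confirmed against the known list of maximal subgroups of $\Sp_8(2)$ with the aid of a computer calculation.
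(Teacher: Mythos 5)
Your proof is essentially correct but follows a genuinely different route from the paper. The paper works top-down: it takes the known list of maximal irreducible subgroups of $\Sp_8(2)$ from Bray--Holt--Roney-Dougal, proves first that an irreducible fixed-point subgroup has no elements of order $5$, $17$ or $21$ and that a nontrivial maximal abelian normal subgroup forces $G\le \AGL_2(3)$, and then descends through the subgroup lattices of $\mathrm{L}_2(17)$, $\Sp_4(2)\wr S_2$, $\Sp_4(4).2$, $S_{10}$ and $\SO_8^\pm(2)$ with Atlas data and \textsf{Magma}. You work bottom-up from $F^*(G)$, splitting into the quasisimple case and the affine case. Your approach buys a more structural and self-contained argument in the affine case (the hyperplane-covering argument pinning $N\cong 3^2$ is cleaner than the paper's Lemma 4.3, which achieves the same conclusion by a more ad hoc analysis of generators and restrictions to Clifford components), at the cost of needing the completeness of a list of simple groups admitting faithful $8$-dimensional $\F_2$-modules -- an appeal to the modular Atlas that is comparable in weight to the paper's appeal to the maximal-subgroup tables. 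Both proofs ultimately delegate the same delicate verifications (conjugacy inside $\Sp_8(2)$, symplectic versus orthogonal forms) to computation.

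A few concrete points need repair. First, your list of candidate simple groups omits $\mathrm{L}_2(17)$, which does have an $8$-dimensional irreducible $\F_2$-module and is in fact a maximal irreducible subgroup of $\Sp_8(2)$; it is harmlessly eliminated by its fixed-point-free elements of order $17$, but the list as written is not complete. Second, $O_{2'}(G)$ need not be nilpotent; the self-centralizing nilpotent normal subgroup you want in the affine case is the Fitting subgroup $F(G)$ (which here equals $\prod_{p\ \mathrm{odd}}O_p(G)$ since $O_2(G)=1$), and you should also justify that $F(G)$ is abelian -- this follows because the irreducible constituents of $V|_{F(G)}$ over $\overline{\F}_2$ have odd dimension dividing $|F(G)|$ and hence dimension $1$. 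Third, the eigenvalue-$1$ space of $g=su$ is $(V^s)^u$, not $V^s$; the correct statement is that $V^g\neq 0$ iff $V^s\neq 0$, which is what you actually use. Finally, in the component case you should note explicitly that a nontrivial $F(G)$ coexisting with $E(G)\neq 1$ is impossible (since $E(G)$ would centralize $F(G)\cong 3^2$ and hence act on each $2$-dimensional homogeneous component through the solvable group $\GL_2(2)$), so that the two cases are genuinely exhaustive as you treat them. None of these affects the final answer.
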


We observe that these groups are absolutely irreducible.  This then gives us a classification of the images of the mod-2 representations on 4-dimensional absolutely simple abelian varieties for which the answer to Question \ref{lq} is \textsf{No} for the modulus 2.  We also remark that in terms of \emph{constructing} such a fourfold, there is at least the possibility that the base field can be taken to be $\Q$; for general $p$-torsion one needs the number field $K$ of definition of $A$ to contain $\Q(\zeta_p)$ in order for the image of $\overline{\rho}_p$ to lie in $\Sp_8(p)$ as opposed to $\GSp_8(p)$. This remark adds to our interest in the special case of 2-torsion.

We discover that the groups of Theorem \ref{cmain} are the first instances of  several infinite families of absolutely irreducible fixed-point subgroups of $\Sp_m(2)$ for certain $m \geq 8$.  None of these families have been previously known.  The first such family is given in the following theorem.

\begin{thm}\label{af1} Let $q$ be odd and $m=q^n-1$. Suppose that $n>1$ or $n=1$ and q is not a prime.
Then there exists an absolutely \ir fixed-point subgroup of $\Sp_{m}(2)$ isomorphic to $\AGL_n(q)$.
\end{thm}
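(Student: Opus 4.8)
The plan is to realize $\AGL_n(q)$ on the deleted permutation module of its natural doubly transitive action. Set $\Omega=\F_q^n$ (affine $n$-space), so $G:=\AGL_n(q)=\F_q^n\rtimes\GL_n(q)$ acts $2$-transitively on $\Omega$ with $\abs{\Omega}=q^n$, and the translation subgroup acts regularly. Since $q$ is odd, $q^n$ is odd, so over $\F_2$ the permutation module $M=\F_2^{\Omega}$ splits as $M=\F_2 j\oplus M_0$, where $j=\sum_\omega e_\omega$ and $M_0=\{\sum_\omega a_\omega e_\omega:\sum_\omega a_\omega=0\}$, with $\dim M_0=q^n-1=m$. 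The standard symmetric form $\langle e_\omega,e_{\omega'}\rangle=\delta_{\omega\omega'}$ is $G$-invariant, and its restriction to $M_0$ is $G$-invariant, alternating (since $\langle v,v\rangle=\sum_\omega a_\omega^2=\sum_\omega a_\omega=0$ for $v\in M_0$), and non-degenerate (its radical $M_0\cap\F_2 j$ is $0$ because $\langle j,j\rangle=q^n\ne0$). This gives $\varphi\colon G\to\Sp(M_0)\cong\Sp_m(2)$, injective because $\ker\varphi$ acts trivially on $M_0$ and on $\F_2 j$, hence on $M$, hence fixes each $e_\omega$ (using $q^n\ge3$). So $\varphi(G)\le\Sp_m(2)$ is isomorphic to $\AGL_n(q)$.

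For absolute irreducibility, note that the rank of the $2$-transitive action is $2$, so over any field $\F$ the Hecke algebra $\End_{\F G}(\F^{\Omega})$ is $2$-dimensional, spanned by the identity and the all-ones matrix $J$; over $\overline{\F}_2$, where $J^2=q^nJ$ with $q^n\ne0$, this forces $\End_{\overline{\F}_2 G}(M_0\otimes\overline{\F}_2)=\overline{\F}_2$. Combined with the classical fact that the deleted permutation module of a doubly transitive group in characteristic prime to the degree is irreducible, this shows $W:=M_0\otimes\overline{\F}_2$ is absolutely irreducible; so $\varphi$ is an absolutely irreducible representation.

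It remains to verify the fixed-point property, and this is where the hypothesis is used. For $g\in G$, the fixed space of $g$ on $M\otimes\overline{\F}_2$ has dimension equal to the number of $\langle g\rangle$-orbits on $\Omega$, so $\dim W^g$ is one less than that number; hence $\det(\varphi(g)-1)=0$ for all $g\in G$ if and only if no $g\in G$ acts transitively on $\Omega$. Suppose $\langle g\rangle$ were transitive on $\Omega$. Then $q^n\mid\ord(g)$ by the orbit–stabilizer theorem, and since $q^n=p^{fn}$ (writing $q=p^f$), $g$ is in particular a $p$-element. Write $g\colon x\mapsto Ax+v$. Transitivity forces $A$ to have eigenvalue $1$, since otherwise $g$ fixes the point $(1-A)^{-1}v$. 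Decompose $\F_q^n=V_0\oplus V_1$ with $V_0$ the generalized $1$-eigenspace of $A$ and $V_1$ its $A$-invariant complement, on which $A-1$ is invertible; conjugating $g$ by a suitable translation we may assume its translation part lies in $V_0$, so $g$ acts linearly (fixing $0$) on $V_1$, whence the $\langle g\rangle$-orbit of $0$ has zero $V_1$-component and transitivity gives $V_1=0$. Thus $A=1+N$ is unipotent, and now transitivity forces $\F_q[N]v=\F_q^n$, i.e.\ $N$ is a single Jordan block of size $n$ and $v$ a cyclic vector. Finally, if $p^b$ is the least power of $p$ with $p^b\ge n$, then a computation with Lucas' congruences shows $g^{p^b}$ is the translation by $N^{p^b-1}v$, which is nonzero precisely when $n=p^b$; in all cases $\ord(g)\le pn$. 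So $p^{fn}=q^n\le pn$, which for $p\ge3$ forces $n=1$ and $f=1$, i.e.\ $n=1$ and $q$ prime — excluded by hypothesis. Therefore no $g\in G$ acts transitively on $\Omega$, $\varphi(G)$ is a fixed-point subgroup, and the theorem follows.

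The main obstacle is the transitivity analysis of the previous paragraph: the problem amounts to showing $\AGL_n(q)$ has no cyclic regular subgroup on $\F_q^n$ under the stated hypothesis. The key points are the reduction (by conjugating with a translation and projecting off the complement of the generalized $1$-eigenspace) to a single unipotent Jordan block with a cyclic vector, and the resulting sharp estimate $\ord(g)\le pn$ via Lucas' theorem; together these single out ``$n>1$, or $n=1$ and $q$ not prime'' as precisely the right hypothesis. That hypothesis is genuinely needed: for $n=1$ and $q=p$ prime, the translation subgroup $\Z/p$ acts regularly and a generator is a single $p$-cycle on $\Omega$ with no nonzero fixed vector on $W$.
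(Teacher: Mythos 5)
Your construction, the symplectic form, and the injectivity check are all fine, and your fixed-point argument is correct and complete: the characteristic-free count $\dim W^g=\#\{\langle g\rangle\text{-orbits on }\Omega\}-1$ reduces everything to showing that no single affine transformation generates a transitive cyclic group on $\F_q^n$, and your reduction to a single unipotent Jordan block with a cyclic translation vector, with the resulting bound $\ord(g)\le pn<q^n$, settles that. This part genuinely differs from the paper, which restricts attention to odd-order elements and instead quotes the bound $q^n-1$ for the maximal element order of $\GL_n(q)$ together with an induction on point stabilizers; your version is self-contained and treats all elements at once. (The aside that $g$ ``is in particular a $p$-element'' is not justified at the point where you assert it, since $q^n\mid\ord(g)$ does not a priori exclude other prime factors, but you never use it, so no harm done.)

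The gap is in the absolute irreducibility step. The ``classical fact'' you invoke --- that the deleted permutation module of a doubly transitive group in characteristic prime to the degree is irreducible --- is false. Take $A_7\le A_8\cong\GL_4(2)$ acting $2$-transitively on the $15$ nonzero vectors of $V=\F_2^4$: the characteristic $2$ is prime to the degree $15$, yet the $14$-dimensional deleted module surjects onto the $4$-dimensional module $V$ via $e_v\mapsto v$, so it is reducible. Your Hecke-algebra computation correctly gives $\End_{\overline{\F}_2G}(W)=\overline{\F}_2$, but when the characteristic divides $|G|$ this only yields indecomposability, not irreducibility; the $A_7$ example also has a $2$-dimensional Hecke algebra (and a self-dual deleted module), so no formal consequence of rank $2$ can rescue the step. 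What actually saves the theorem is the regular normal subgroup: restricted to the translation group $A=\F_q^n$, which has odd order, $W$ is semisimple and decomposes as the direct sum of the $q^n-1$ one-dimensional eigenspaces attached to the nontrivial characters of $A$; any nonzero $G$-submodule is a sum of some of these components, and $\GL_n(q)$ permutes the nontrivial characters of $A$ transitively, so any nonzero submodule is all of $W$. This Clifford-theoretic argument is exactly the one the paper gives; with it in place of your irreducibility claim, the rest of your proof goes through.
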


Note that the group $\AGL_n(q)$ with $q=r^k$, $r$ a prime, is a subgroup of $\AGL_{nk}(r)$. We expect that the latter group is a maximal fixed-point subgroup of $\Sp_{m}(2)$ for $m=q^n-1$.

The second family is exemplified by the isomorphism ${\rm L}_3(2):2\cong \PGL_2(7)$, and is given in the following theorem.

\begin{thm}\label{l2q} Let  $G=\PGL_2(q)$, where $4|(q+1)$ and $3|(q-1)$. Then $G$ has an absolutely \ir
unisingular \rep into $\Sp_{q+1}(2)$.\end{thm}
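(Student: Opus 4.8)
The plan is to realize $\rho$ as the mod-$2$ reduction of a principal series representation of $G=\PGL_2(q)$ and to control that reduction by means of the theory of blocks with cyclic defect group. Fix a Borel subgroup $B=TU\le G$, with unipotent radical $U$ of order $q$ and split maximal torus $T$ of order $q-1$. Since $3\mid(q-1)$ there is a character $\chi\colon T\to\overline{\F}_2^{\,\times}$ of order $3$; inflate it to $B$ and set $V=\Ind_B^G\chi$, an $\overline{\F}_2 G$-module of dimension $[G:B]=q+1$. Lifting $\chi$ to a character $\widetilde\chi$ of order $3$ in characteristic $0$ and writing $\pi=\Ind_B^G\widetilde\chi$, we obtain an ordinary representation of degree $q+1$ which is irreducible because $\chi$ is regular ($\chi^2\neq1$, so $\chi\neq\chi^{-1}$); since induction commutes with reduction, $V$ is the mod-$2$ reduction of $\pi$. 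I will show in turn that $V$ is absolutely irreducible, self-dual (hence symplectic in characteristic $2$), realizable over $\F_2$, and unisingular; these together yield the asserted embedding $G\hookrightarrow\Sp_{q+1}(2)$.

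\emph{Absolute irreducibility.} This is the heart of the matter, and the only place the hypothesis $4\mid(q+1)$ is used. Writing $v_2$ for the $2$-adic valuation, $4\mid(q+1)$ forces $q\equiv3\pmod 4$, hence $v_2(q-1)=1$, so a Sylow $2$-subgroup of $T$ has order $2$. Now $\pi$ is a principal series representation associated to a character of $T$ of order $3$; the corresponding semisimple conjugacy class of $G$ (consisting of elements of order $3$, which are regular) has centralizer $T$. Consequently, by the classical determination of the $2$-blocks of $\PGL_2(q)$ (or, more generally, by the Fong--Srinivasan description of blocks of finite classical groups), $\pi$ lies in a $2$-block whose defect group is a Sylow $2$-subgroup of $T$, i.e.\ a cyclic group of order $2$. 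A $2$-block with cyclic defect group of order $2$ has inertial index $1$, so its Brauer tree is a single edge with two (non-exceptional) vertices: such a block contains exactly one irreducible Brauer character $\varphi$ and two ordinary irreducible characters, each of which reduces to $\varphi$; in particular $\deg\varphi=q+1$. As $V$ is the reduction of $\pi$, we get $V\cong\varphi$, absolutely irreducible of dimension $q+1$. (If instead $4\mid(q-1)$, then $v_2(q-1)\ge 2$, the block has defect $\ge 2$ and need not be cyclic, and $V$ need not remain irreducible, which is why the hypothesis is genuinely needed.)

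\emph{The invariant symplectic form and descent to $\F_2$.} The character of $\pi$ is rational, indeed integral: it takes the value $1$ on a unipotent element, $2$ on a split semisimple element of order prime to $3$, $\omega+\omega^{-1}=-1$ on a split semisimple element of order divisible by $3$, $0$ on an elliptic semisimple element, and $q+1$ at the identity, and every element of $\PGL_2(q)$ falls into one of these classes. In particular $\pi$ is self-dual, whence $V\cong V^{*}$. An absolutely irreducible self-dual $\overline{\F}_2 G$-module $V$ of dimension $\ge 2$ carries a nondegenerate $G$-invariant alternating form: the (unique up to scalar) invariant bilinear form $\beta$ is symmetric, because $(v,w)\mapsto\beta(w,v)$ is again invariant, hence equals $c\beta$ for a scalar $c$ with $c^2=1$, so $c=1$ in characteristic $2$; and the map $v\mapsto\beta(v,v)$ is additive with $\beta(av,av)=a^2\beta(v,v)$, so its kernel is a $G$-stable $\overline{\F}_2$-subspace of $V$, hence all of $V$ (it is not $\{0\}$, as $\dim V\ge 2$), i.e.\ $\beta$ is alternating. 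Finally, a rational-valued Brauer character is fixed by $\Frob_2$, so $V\cong V^{(2)}$ and $V$ is realizable over $\F_2$; the invariant alternating form descends, giving $\rho\colon G\to\Sp_{q+1}(\F_2)=\Sp_{q+1}(2)$ with absolutely irreducible image.

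\emph{Unisingularity.} Write $g=g_{2'}g_2$ for the product of the $2'$-part and the $2$-part of $g\in G$. Then $\rho(g_2)$ is unipotent and commutes with the semisimple $\rho(g_{2'})$, so $1$ is an eigenvalue of $\rho(g)$ if and only if $V^{g_{2'}}\neq\{0\}$; and since $g_{2'}$ is $2$-regular, $\dim V^{g_{2'}}=\dim\pi^{g_{2'}}$, which may be computed in characteristic $0$. It thus suffices to show $\pi^{s}\neq\{0\}$ for every $2$-regular $s\in G$. Identify $G/B$ with $\PP^1(\F_q)$ and apply Mackey's formula to $\Res_{\langle s\rangle}\Ind_B^G\widetilde\chi$, whose summands are indexed by the orbits of $\langle s\rangle$ on $\PP^1(\F_q)$. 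If $s=1$ the claim is trivial. If $s$ is unipotent, it fixes a point $x$ and lies in the unipotent radical of the Borel $\mathrm{Stab}(x)$, on which $\widetilde\chi$ is trivial, so the fixed point $\{x\}$ contributes a trivial summand. If $s\neq1$ is semisimple, it acts on $\PP^1(\F_q)$ with at least one free $\langle s\rangle$-orbit ($(q-1)/|s|$ of them if $s$ is split, $(q+1)/|s|$ if $s$ is elliptic --- in either case a positive number), and a free orbit contributes a copy of the regular representation of $\langle s\rangle$, which contains the trivial representation. In every case $\pi^{s}\neq\{0\}$. Therefore $\det(\rho(g)-1)=0$ for all $g\in G$, so $\rho$ is unisingular, completing the proof. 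The only real difficulty above is the block-theoretic input in the irreducibility step; one can avoid citing Fong--Srinivasan by checking instead that the relevant $2$-block consists exactly of the two principal series characters attached to $\chi$ and to $\chi\varepsilon$ (with $\varepsilon$ the order-$2$ character of $T$), both of degree $q+1$, which already forces the block to have defect $1$.
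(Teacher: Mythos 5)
Your proof is correct, and it reaches the theorem by a genuinely different route from the paper's. The paper first works inside ${\rm L}_2(q)=\PSL_2(q)$: there the Borel subgroup has order $q(q-1)/2$ with $(q-1)/2$ odd (this is exactly where $4\mid(q+1)$ enters), so the induced character of degree $q+1$ has $2$-defect zero and its irreducibility mod $2$ is immediate, with no block theory beyond the defect-zero case; the hypothesis $3\mid(q-1)$ is used just as you use it, to choose the inducing character of order $3$ so that the Brauer character is rational, and the symplectic realization over $\F_2$ is then quoted from Feit (Lemma~\ref{tt1}) rather than reproved. The passage to $\PGL_2(q)$ is done by a counting/Clifford argument: $\PGL_2(q)$ and ${\rm L}_2(q)$ have the same number of odd-order classes, hence every $2$-modular irreducible of $\PGL_2(q)$ restricts irreducibly to ${\rm L}_2(q)$, and the fixed-point property only concerns odd-order elements. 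You instead work directly in $\PGL_2(q)$, where the relevant block has defect $1$ rather than $0$, and you pay for this with the cyclic-defect/Brauer-tree analysis and the Fong--Srinivasan determination of the defect group; this is correct, but it is the one place where you lean on machinery the paper's two-step reduction avoids (your suggested fallback via the explicit block partition would also work). Two points where your write-up is more complete: the Mackey-formula orbit count on $\PP^1(\F_q)$ makes the unisingularity at $2$-regular elements fully explicit, whereas the paper leaves that verification for ${\rm L}_2(q)$ implicit in the character values; and you reprove the self-dual-implies-alternating step instead of citing it. One harmless slip: the value $-1$ of the induced character occurs on split semisimple elements outside the kernel of $\chi$, not precisely on those of order divisible by $3$ (when $9\mid q-1$ some order-$3$ elements lie in the kernel); this does not affect the integrality you need.
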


The third family arises from the Steinberg $2$-modular \rep of simple groups of Lie type, as follows.

\begin{thm} \label{zmain}\label{st1}
Let $G$ be a finite simple group of Lie type in characteristic $2$ and $\St_2$ the $2$-modular Steinberg \rep of $G$. Suppose that $G$ is not of type $A_1(q)$, ${}^2A_n(q)$ ($n$ odd), or $E_7(q)$. Then
$\St_2(G)$ is a fixed point subgroup of $\Sp_m(2)$, where $m=|G|_2$ is the $2$-part of the order of G.
\end{thm}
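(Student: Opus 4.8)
The plan is to separate the two requirements — that $\St_2(G)$ lie in $\Sp_m(2)$, and that it be unisingular — with essentially all the work in the second. For the first, the ordinary Steinberg character is rational‑valued, so $\St_2$ is self‑dual, and since it is absolutely irreducible it carries a $G$‑invariant nondegenerate bilinear form $B$, unique up to scalar. The map $v\mapsto B(v,v)$ is $\F_2$‑linear and $G$‑invariant, hence lies in $\Hom_G(\St_2,\F_2)$, which is $0$ because $\St_2$ is irreducible and nontrivial; therefore $B$ is alternating and $m=\dim\St_2$ is even, so $\St_2(G)\subseteq\Sp_m(2)$. This step uses nothing about the type of $G$, nor the hypothesis.

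For unisingularity I would first reduce to $2$‑regular elements. Given $g\in G$ with Jordan decomposition $g=g_s g_u$ (so $g_s$ is the $2'$‑part, semisimple, and $g_u$ the $2$‑part, unipotent since $\ch=2$), the subspace $W:=(\St_2)^{\langle g_s\rangle}$ is $g_u$‑stable, and if $W\neq 0$ then the $2$‑element $g_u$, acting unipotently on the nonzero $\F_2$‑space $W$, fixes a nonzero vector there, which is then fixed by $g$. Hence $\St_2$ is unisingular iff $(\St_2)^{\langle h\rangle}\neq 0$ for every semisimple $h\in G$. For such $h$ the Brauer character of $\St_2$ agrees on $2$‑regular classes with the ordinary Steinberg character $\St$, so, using the Deligne–Lusztig/Curtis value $\St(s)=\epsilon_G\,\epsilon_{C_G(s)}\,|C_G(s)|_2$ for semisimple $s$ (with $\epsilon_H=(-1)^{\sigma_H}$, $\sigma_H$ the $\F_q$‑rank, and the usual care when the simple group is not simply connected), one gets
\[
\dim(\St_2)^{\langle h\rangle}=\frac{1}{|h|}\sum_{i=0}^{|h|-1}\St(h^i)=\frac{\epsilon_G}{|h|}\sum_{d\mid |h|}\phi(d)\,\epsilon_{C_d}\,|C_d|_2,\qquad C_d:=C_G(h^{|h|/d}).
\]
Since the left side is a nonnegative integer and the $d=1$ term contributes $+m$, it suffices to show the inner sum never equals $-m$, i.e.\ that there is no total cancellation.

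To organize the verification I would induct on the Lie rank through Levi subgroups. For a proper parabolic $P=LU$ the $U$‑fixed points $(\St_2)^U$, as a module for $L=P/U$, form the $2$‑modular Steinberg module of $L$ (Harish–Chandra restriction of a Steinberg module is a Steinberg module); this is an $L$‑stable subspace of $\St_2$, so for $h$ conjugate into $L$ one has $\dim(\St_2)^{\langle h\rangle}\ge\dim\bigl((\St_2)^U\bigr)^{\langle h\rangle}=\dim(\St_2^L)^{\langle h\rangle}$. Hence, once unisingularity is known for all proper Levis, the only $h$ that can fail $\St_2$ are the \emph{elliptic} ones — those conjugate into no proper Levi, equivalently those with $Z(C_G(h))^\circ$ anisotropic, so that $\epsilon_{C_G(h)}=\pm1$ is forced by the derived group alone. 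One then runs through the types on the allowed list: parametrize the elliptic semisimple classes via $F$‑conjugacy classes in the Weyl group / maximal‑torus data, record the orders $|C_d|_2$ and the sign pattern $\epsilon_G\epsilon_{C_d}$ — using crucially that maximal tori in characteristic $2$ have \emph{odd} order, so each $|C_d|_2$ is a proper $2$‑power dividing $m$ — and check the inner sum is never $-m$. For the excluded families one exhibits an explicit bad elliptic $h$: for $A_1(q)$ a generator of the nonsplit torus of order $q+1=m+1$, along which $\epsilon_G\epsilon_{C_d}=-1$ uniformly and the sum telescopes to $m-(m+1-1)=0$, and analogously (but less transparently) for ${}^2A_n(q)$ with $n$ odd and for $E_7(q)$.

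The main obstacle is precisely this final type‑by‑type estimate: controlling the $2$‑parts of centralizer orders together with the rank parities $\sigma_G-\sigma_{C_G(h)}$ for all elliptic semisimple $h$, across the untwisted, twisted, Suzuki/Ree, and exceptional types, and in particular showing that the cancellation forced in $A_1(q)$ genuinely recurs in ${}^2A_n(q)$ ($n$ odd) and $E_7(q)$ and in no other type — a naive bound $|C_d|_2\le m/q^2$ only settles small ranks, so one needs the explicit torus and centralizer structure. Everything else — the form being alternating, the reduction to semisimple elements, the character formula, and the Levi induction — is routine.
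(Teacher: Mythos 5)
Your framing is fine as far as it goes: the self-duality/alternating-form argument for $\St_2(G)\subseteq\Sp_m(2)$ matches the paper's Lemma \ref{tt1}, the reduction to semisimple (odd-order) elements via the Jordan decomposition is standard and correct, and the identity $\dim(\St_2)^{\langle h\rangle}=\frac{1}{|h|}\sum_i\St(h^i)$ together with Curtis's formula $\St(s)=\epsilon_G\epsilon_{C_G(s)}|C_G(s)|_2$ is a valid restatement of what must be proved. But that restatement is all you have: the entire content of the theorem is the positivity of that sum for the types in question, and you explicitly concede that the "type-by-type estimate" is not carried out and that naive bounds do not suffice. A proof that ends with "parametrize the elliptic classes and check the sum is never $-m$" has not proved anything for $A_n(q)$ ($n$ odd), $C_n(q)$ ($n\equiv 1,2\pmod 4$), or $D_n^\pm(q)$ ($n\equiv 1,2\pmod 4$) --- which are exactly the cases not already covered by the second author's earlier work (Theorem \ref{z90}), i.e.\ exactly the cases this theorem is about.

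There is also a concrete flaw in the proposed Levi induction. You claim that once unisingularity is known for all proper Levi subgroups, only elliptic semisimple $h$ can fail. But every group on the allowed list has proper Levi subgroups with $A_1$-factors, and by Lemma \ref{sl2} the $2$-modular Steinberg module of $\SL_2(q)$ is \emph{not} unisingular; so the inductive hypothesis is false for those Levis. Concretely, in $G=\Sp_{2n}(q)$ take $h$ generating a cyclic torus of order $q+1$ inside an $\Sp_2(q)$-Levi-factor: $h$ is not elliptic (its centralizer contains a split torus), yet $(\St_L)^{\langle h\rangle}=0$ for that Levi, so your induction says nothing about it. Such elements must be handled directly, and indeed the paper's argument is built around them: rather than averaging characters, it extends $\St_q$ to the ambient algebraic group, uses Lemma \ref{st2} to identify the weight system with the characteristic-zero one, shows via the dominance order (Lemmas \ref{mm2}, \ref{rr1}) that specific weights such as $(q+1)\om_1$, $(q-1)\om_1$, $(q-1)\om_1+\om_2$ (for $C_n,D_n$) or suitable $W$-conjugates of $(q-1)\om_i+\om_1+\om_n$ (for $A_n$) occur, and then exhibits for each semisimple class a weight taking the value $1$ on a representative (Propositions \ref{aa1} and \ref{n16}). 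That is a genuinely different, and complete, mechanism for producing the eigenvalue $1$; your proposal stops at the point where the real work begins.
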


Note that a proof of this theorem is required only for groups of type 
$A_n(q)$ with $n$ odd, $C_n(q)$ with $n\equiv 1,2\pmod 4$ and of type $D_n^\pm(q)$, $n\equiv 1,2\pmod 4$, as for the other groups the result has been obtained in an earlier paper of the second-named author; see Theorem \ref{z90} below. The group ${\rm L}_3(2)\cong A_2(2)$ arising in Theorem \ref{cmain}  for $m=8$ is the minimal order example of those in Theorem \ref{st1}.

We do not have a result as above for unitary groups ${}^2A_n(q)\cong \PSU_{n+1}(q)$ with $n$ odd and
for $E_7(q)$. For $G=A_1(q)$ the 2-modular Steinberg \rep does not yield a fixed-point subgroup (Lemma \ref{sl2}).

In terms of the original questions of Lang and Katz, we obtain the following Corollary for abelian varieties of certain dimensions $\geq 5$.

\begin{cor}\label{avcor}
Let $G \subset \Sp_m(2)$ be the symplectic embedding of any of the groups of Theorems {\rm \ref{af1}, \ref{l2q}}, or {\rm \ref{st1}}.  Then $G$ occurs as the image of the mod $2$ representation of an absolutely simple abelian variety of dimension $m/2$, defined over some number field $K$, such that $A_\p(\Fp)$ has an even number of points for every good prime $\p \subset \OK$, but no member of the $K$-isogeny class of $A$ has a global point of order $2$.\end{cor}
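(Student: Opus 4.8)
The plan is to feed the groups produced by Theorems~\ref{af1}, \ref{l2q} and \ref{st1} into Katz's reduction of Question~\ref{lq} to Question~\ref{czq}, in the direction ``a fixed-point subgroup with no trivial composition factor gives rise to an abelian variety'' in which \cite{katz} establishes it. Fix one of those groups together with its given symplectic embedding $\rho\colon G\hookrightarrow\Sp_m(2)$ on $V=\F_2^m$. By the cited theorems $\rho(G)$ is a fixed-point subgroup, and since $\rho$ is absolutely irreducible and $m\ge 8>1$, the $\F_2 G$-module $V$ is irreducible and nontrivial, so its Jordan--H\"older series has no trivial constituent. Thus $(G,\rho)$ is exactly an input for which Question~\ref{czq} has answer \textsf{No} in dimension $d=m/2$, whence the same for Question~\ref{lq}.

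To realise this concretely I would run the construction underlying Katz's equivalence. Every finite group is a Galois group over some number field, so realise $G=\Gal(L/K_0)$ for a number field $K_0$; because $p=2$ we have $\Q(\zeta_2)=\Q$, the similitude character is irrelevant, and $\rho$ makes $V$ a symplectic $\F_2[\Gal_{K_0}]$-module. Twisting a suitable Siegel modular variety of $(m/2)$-dimensional principally polarized abelian varieties (ppav)---taken with enough level structure to carry a universal family---by this Galois module produces a geometrically irreducible $K_0$-variety $\mathsf Y$ of positive dimension whose points over an extension $K\supseteq K_0$ classify ppav $A/K$ of dimension $m/2$ equipped with a $\Gal_K$-equivariant symplectic isomorphism $A[2]\cong V$. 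Choosing a closed point of $\mathsf Y$ whose residue field $K$ is linearly disjoint from $L$ over $K_0$, the surjection $\Gal_K\twoheadrightarrow\Gal(LK/K)\cong G$ gives $\overline{\rho}_{A,2}(\Gal_K)=\rho(G)=G$; and since the locus of ppav that are not absolutely simple is a countable union of proper closed subvarieties of $\mathsf Y$, this point may be taken outside all of them, so that $A$ is absolutely simple. The outcome is an absolutely simple $A/K$ of dimension $m/2$ with mod-$2$ image $G$.

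The two arithmetic conclusions are then read off directly. For a prime $\p\subseteq\OK$ of good reduction coprime to $2$ one has $\#A_\p(\Fp)\equiv\det(\overline{\rho}_{A,2}(\Frob_\p)-1)\pmod 2$, and this determinant vanishes in $\F_2$ because $\overline{\rho}_{A,2}(\Frob_\p)\in G$ and $G$ is a fixed-point subgroup; hence $\#A_\p(\Fp)$ is even for all but the finitely many $\p$ above $2$, which is the content of the claim. For the global assertion, any $K$-isogeny $A\to A'$ induces an isomorphism $T_2A\otimes\Q_2\xrightarrow{\sim}T_2A'\otimes\Q_2$ of $\Q_2[\Gal_K]$-modules, so by Brauer--Nesbitt $A[2]$ and $A'[2]$ have the same semisimplification as $\F_2[\Gal_K]$-modules; since $A[2]$ is irreducible and nontrivial, $A'[2]$ has no trivial $\Gal_K$-constituent, and in particular $A'[2]^{\Gal_K}=0$, i.e.\ $A'$ has no $K$-rational point of order $2$. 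As every member of the $K$-isogeny class of $A$ is such an $A'$, the Corollary follows.

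The main obstacle is the construction step, not these verifications: one must extract from \cite{katz} a version that realises $G$ as the \emph{exact} mod-$2$ image over a \emph{number field} while keeping the abelian variety absolutely simple. The first requirement is what forces the linear-disjointness choice of $K$ and the second forces one to avoid the thin non-simple locus; the delicate point is that both can be met at once, which relies on the geometric irreducibility and positive dimension of the Siegel variety together with Hilbert irreducibility over the Hilbertian field $K_0$ (allowing countably many thin subsets to be avoided). An alternative that sidesteps the moduli-theoretic bookkeeping would be to check directly that each $\rho$ in Theorems~\ref{af1}, \ref{l2q} and \ref{st1} acts primitively and tensor-indecomposably; combined with absolute irreducibility this already forces $A$ to be absolutely simple, but verifying primitivity for the three infinite families is where genuine work would be required.
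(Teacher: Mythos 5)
Your verification steps (parity of $\#A_\p(\Fp)$ from the fixed-point property, and the Brauer--Nesbitt argument showing no member of the isogeny class has rational $2$-torsion) are correct and are exactly what the Corollary needs; but your construction of $A$ takes a genuinely different, and much heavier, route than the paper's. The paper simply starts with an abelian variety $A/\Q$ of dimension $m/2$ whose mod-$2$ image is all of $\Sp_m(2)$ (such varieties exist), and base-changes to $K=\Q(A[2])^{G}$, the fixed field of $G$ inside the $2$-division field; over $K$ the image of $\overline{\rho}_2$ is then exactly $G$ by Galois theory, with no need for inverse Galois realizations, twisted Siegel moduli spaces, linear-disjointness choices, or Hilbert irreducibility. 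What your approach buys is explicit control over the field of definition from the group-theoretic side (and it is closer in spirit to Katz's original equivalence); what it costs is precisely the ``moduli-theoretic bookkeeping'' you flag, including the fact that level-$2$ structure alone does not give a fine moduli space and that a closed point avoiding a \emph{countable} union of proper closed subvarieties is not automatic on a variety over a number field (every closed point lies in such a union), so the non-simple locus must instead be packaged into a single thin set, e.g.\ via openness of the adelic image off a thin set. One point in your favor: you are right that absolute simplicity does not follow from absolute irreducibility of the mod-$2$ representation alone, and the paper's one-line proof is in fact silent on this --- implicitly one must choose the initial $A/\Q$ with $\End(A_{\overline{\Q}})=\Z$ (big adelic monodromy), which survives the base change to $K$. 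Your closing ``alternative'' via primitivity and tensor-indecomposability is not needed and, as stated, would not suffice anyway, since those properties of $\rho|_{\Gal_K}$ do not control what happens after further base change.
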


Corollary  \ref{avcor}, who's proof appears at the end of the paper, immediately before Appendix \ref{appendix}, shows that symplectic, irreducible, unisingular representations occur as Galois representations attached to abelian varieties $A$ over number fields $K$.  But the degree $[K:\Q]$ of the field of definition of $A$ may not be minimal if we use the purely Galois-theoretic construction of $A$.  It would be interesting to search for examples of these abelian varieties such that $[K:\Q]$ is minimized, or even to determine necessary lower bounds on $[K:\Q]$.

For an \irr of $G\in\{\Sp_{2n}(q), \ {\rm Spin}^\pm_{2n}(q)$, $\SL_{n}(q)$ ($q$ even$) \}$, we give a rather strong sufficient condition for an \irr to be unisingular.

\begin{thm}
\label{s21}
Let $G$ be a finite simple group of Lie type $A_n(q)$, $C_n(q)$ or $D_n^\pm (q)$, q even,
$\mathbf{G}$ the respective simple algebraic group and let $\om_1\ld \om_n$ be the fundamental weights of the weight system of $\mathbf{G}$. Let $V$ be an \ir $\mathbf{G}$-module. Suppose that the \f conditions hold:

$(1)$ $\mathbf{G}$ is of type $A_n$ and there  are natural numbers $m_1,m_2$ and $i\in\{1\ld n\}$ such that $m_1(q-1)\om_i+\om_1+\om_n$ and $m_2(q-1)\om_i$ are weights of $V$.

$(2)$ $\mathbf{G}$ is of type $C_n$ $(n>1)$ or $D_n$ $(n>3)$ and there are natural numbers $m_1,m_2,m_3$ such that  
$m_1(q+1)\om_1, m_2(q-1)\om_1$ and $m_3(q-1)\om_1+\om_2$ are weights of V.

\noindent Then  every semisimple element $g\in G$ has \ei $1$ on V.\end{thm}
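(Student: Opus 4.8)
The plan is to reduce the question about eigenvalue $1$ of a semisimple element $g \in G$ to a statement about weights of $V$ under a maximal torus, and then use the hypotheses $(1)$ and $(2)$ to exhibit, for each such $g$, a weight that vanishes on $g$. Recall that an element $g \in G = \mathbf{G}(\mathbf{F}_q)$ of order prime to $p$ lies in a maximal torus $T$ of $\mathbf{G}$, and for a weight $\mu$ of $V$ the corresponding weight space contributes the ``eigenvalue'' $\mu(g)$ to the action of $g$ on $V$. So $g$ has eigenvalue $1$ on $V$ if and only if some weight $\mu$ of $V$ satisfies $\mu(g) = 1$. The subtlety is that $g$ need not be regular or lie in a fixed torus, so one must control $\mu(g)$ uniformly over all semisimple classes.

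The key device is that the relevant maximal tori of $\mathbf{G}(\mathbf{F}_q)$ are, up to conjugacy, in bijection with $F$-conjugacy classes of the Weyl group, and each such torus $T_w$ has order a product of cyclotomic-type factors; in the classical cases the eigenvalues of $w$ acting on the character lattice tell us that $\mu(g)$ lies in a cyclic group whose order divides $q^j - 1$ or $q^j + 1$ for appropriate $j$. Concretely, for type $A_n$ every semisimple element has, on the natural module, eigenvalues that are roots of unity in $\overline{\mathbf{F}}_q$ whose orders divide $q^{d_1}-1, \dots, q^{d_s}-1$ for a partition $d_1 + \cdots + d_s = n+1$ coming from the torus; the character $m(q-1)\om_i$ evaluated on such $g$ produces a product of $(q-1)$-power expressions in these eigenvalues, and one chooses $m = m_2$ large enough (a common multiple argument) so that $\big(m_2(q-1)\om_i\big)(g) = 1$ for \emph{every} semisimple $g$. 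Here the hypothesis that $m_2(q-1)\om_i$ be an actual weight of $V$ is exactly what lets us conclude $g$ has eigenvalue $1$ on $V$. The same mechanism, applied with $m_1(q+1)\om_1$, handles those semisimple classes in type $C_n$ or $D_n$ whose torus order involves $q+1$-factors (the ``minus-type'' tori), while $m_2(q-1)\om_1$ handles the ``plus-type'' ones; the extra weight $m_3(q-1)\om_1 + \om_2$ is needed to cover the mixed tori $T_w$ whose eigenvalue pattern on the lattice is not purely $\pm 1$ on a single coordinate, i.e.\ where no single $\om_1$-type character suffices and one must absorb a leftover $\om_2$-contribution.

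In more detail, for a fixed semisimple $g$ lying in $T_w$, I would diagonalize the action of $g$ over $\overline{\mathbf{F}}_q$ and split the coordinates into orbits under the relevant power of Frobenius twisted by $w$; on each orbit of size $j$ the eigenvalue is a root of unity of order dividing $q^j \mp 1$, hence killed by raising to the $(q^j \mp 1)$-th power, which in turn is divisible by $q \mp 1$ when $j$ is odd (respectively by a controlled factor in general). The arithmetic lemma underlying everything is that $(q-1) \mid (q^j - 1)$ for all $j$, and that $(q+1) \mid (q^{2k} - 1)$, so a suitable multiple of $(q-1)\om_i$ or $(q+1)\om_1$ annihilates the ambiguity; the case analysis of which multiple works for which $w$ is what forces the three separate weights in $(2)$ and the two in $(1)$. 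One then simply notes that $V$, having the listed weights, must have a weight on which $g$ acts trivially, because the product of the $g$-eigenvalue with its ``correction'' is $1$ and the correction $m_k(q\mp 1)\om_\bullet$ (or $\om_1 + \om_n$ in type $A$) is itself a weight, so by subtracting we land on another weight of $V$ — here one uses that the set of weights of $V$ is closed in the relevant combinatorial sense only within a single $W$-orbit, so the argument is that $\mu_1(g)\mu_2(g)^{-1} = 1$ for two listed weights $\mu_1,\mu_2$ whose difference $\mu_1 - \mu_2$ is the ``$+\om_1+\om_n$'' or ``$+\om_2$'' correction, giving the needed weight directly.

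The main obstacle will be the bookkeeping in step two: making the $m_1, m_2, m_3$ (resp.\ $m_1, m_2$) genuinely uniform over \emph{all} semisimple classes, which means over all $F$-conjugacy classes of $W$ of type $C_n$ or $D_n$ simultaneously. One has to verify that the three prescribed weight-shapes really do cover every torus type — in particular the ``degenerate'' tori where $w$ has eigenvalue $1$ on part of the lattice (so the corresponding coordinate of $g$ already lies in $\mathbf{F}_q^\times$, killed by the plain $(q-1)$-power) versus eigenvalue $-1$ (needing the $(q+1)$-power) versus a genuinely higher-order eigenvalue. For $D_n^\pm$ with $n \equiv 1, 2 \pmod 4$ the spinor-lattice subtlety (whether $\om_1$, $\om_{n-1}$, $\om_n$ are in the root lattice or not) interacts with parity of $n$, and checking that $\om_1$ alone — rather than a spin weight — is enough is the delicate point. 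I expect the type-$A$ part to be comparatively routine given the cleaner torus structure, and the bulk of the work to be the $C_n/D_n$ torus enumeration.
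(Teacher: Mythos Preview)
There is a genuine gap: you have the quantifiers backwards. The integers $m_1, m_2, m_3$ are \emph{given} as part of the hypothesis --- they are whatever natural numbers happen to make the listed expressions actual weights of the fixed module $V$ --- so you cannot ``choose $m = m_2$ large enough (a common multiple argument)''. The proof must work for the \emph{particular} $m_i$ that the hypothesis supplies, and your divisibility scheme (using $(q-1)\mid (q^j-1)$ and then absorbing the cofactor $(q^j-1)/(q-1)$ into the multiple) collapses once the $m_i$ are no longer free parameters.

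The paper's argument runs in the opposite direction, and the $m_i$ turn out to be irrelevant. For types $C_n$ and $D_n$ (Proposition~\ref{aa1}, resting on Lemma~\ref{spo}) one makes a trichotomy on $|g|$: if $|g|$ divides $q\mp1$ then already $((q\mp1)\omega_1)(t)=\varepsilon_1(t)^{q\mp1}=1$; if $|g|$ divides neither $q-1$ nor $q+1$, then $g$ is $\mathbf{G}$-conjugate to some $t$ with $\varepsilon_1(t)^{q}\,\varepsilon_2(t)=1$, and since $(q-1)\omega_1+\omega_2 = q\varepsilon_1+\varepsilon_2$ one gets $((q-1)\omega_1+\omega_2)(t)=1$. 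In each branch the \emph{base} weight already evaluates to $1$, so any multiple $m_j$ of it does too. For type $A_n$ (Lemma~\ref{n16}) the mechanism is that for every maximal torus $T_\pi$ of $G$, the Weyl orbit of either $(q-1)\omega_i$ or $(q-1)\omega_i+\omega_1+\omega_n$ contains a weight trivial on all of $T_\pi$; since $W(m\mu)=m\,(W\mu)$ and the weight set of $V$ is $W$-stable, this yields a weight of $V$ trivial on $g$ regardless of $m$. Your proposed step of ``subtracting'' two listed weights $\mu_1,\mu_2$ to land on a weight $\mu_1-\mu_2$ of $V$ does not work either: the set of weights of $V$ is not closed under differences. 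Stability under the Weyl group is the closure property that carries the argument, and it is precisely the ingredient missing from your proposal; with it in hand there is no ``bookkeeping over all $F$-conjugacy classes of $W$'' to do at all.
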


We show that this condition is satisfied by the $p$-modular Steinberg \rep of $G$.

Finally, a natural question would be to extend our fixed-point subgroup result in Theorem \ref{cmain} to larger $n$ ($n=10,12$, etc.).  However, this would lead to an analysis of many cases which do not depend uniformly on $n$. For this reason we think that further work on understanding the unisingularity problem in higher rank must focus on constructing  unisingular representations of uniform families of groups.
 This paper gives a certain contribution for simple groups of Lie type.

Obtaining necessary and sufficient condition of unisingularity for an \irr of arbitrary simple groups of
 Lie type in defining characteristic 2 does not seem to be a realistic task. However, this has been achieved for groups $\SL_n(2)$ and $\Sp_{2n}(2)$ in the papers \cite{z18,z20} of the second-named author.

\med
In the next two sections we review our notation and conventions.  We then focus on affine groups before the special case of $\Sp_8(2)$ where our goal is to classify all 8-dimensional symplectic irreducible fixed-point subgroups.  We conclude the paper by investigating groups of Lie type in more generality and add a brief appendix where we record the computational aspects of our classification. \\

\section{Background: Notation and conventions}

We denote by $\Z$, $\Q$, $\C$, $\F_q$ the set of integers, the field of rational and complex numbers,
and the finite field of $q$ elements, respectively. For a set $X$ we write $|X|$ for the cardinality of $X$.

\subsection{Finite Groups}

In our work below on finite groups, we use the symbols $S_n$, $A_n$, $D_{2n}$, and $C_n$ to denote the symmetric, alternating, dihedral, and cyclic groups on $n$ letters.  If $N$ and $Q$ are groups we use the notation $N.Q$ to denote any group with kernel $N$ and quotient $Q$.  If the extension is split, we use $N:Q$.  We use $N \wr_\phi Q$ to denote the wreath product $N^n \rtimes_\phi Q$, where $\phi:Q \to S_n$ is a permutation representation.  When the representation is understood from context we omit $\phi$ from the notation.

We import notation from the theory of finite and classical groups as well, such as ${\rm L}_n(q)$ for the simple group $\PSL_n(\F_q)$ over the field of $q$ elements.  In general, we follow the conventions of \cite{bhrd} for the finite classical groups.  While most of this notation is standard across the literature, we are careful to point out that the definition of the special orthogonal group tends to vary across the literature. We use notation   $\SO_8^\pm(2)$ for the special orthogonal group of degree 8,
and $\Omega^\pm_8(2)$ for its simple subgroup of index 2.

If $G$ is a finite group and $p$ is a prime number then we write $|G|$ for the order of $|G|$ and $|G|_{p}$ for the $p$-part of $|G|$, namely the greatest $p$-power dividing $|G|$.  Furthermore, if $G$ is a finite group of Lie type in defining characteristic $p$,  then there exists a unique irreducible representation of $G$ over the complex numbers $\C$ known as the \emph{Steinberg} representation of $G$.  Let $F$ be an algebraically closed field of characteristic $p$. Then there exists an irreducible $F$-representation of $G$ whose Brauer character coincides with the character of the Steinberg representation at the semisimple elements of $G$.  We use the notation $\St$ or $\St_G$ for the Steinberg representation of $G$ over $\C$, and $\St_q$ for the modular Steinberg representation over an algebraically closed field of characteristic $p|q$.

Because we will often focus on a specific prime (usually $p=2$), we follow standard conventions in groups theory and write $O_p(G)$ for the maximal normal $p$-subgroup of $G$. When $p$ is understood, we call a $p'$-element of $G$ one that has order coprime to $p$; a $p'$-group has order coprime to $p$.

If $V$ is a set and $Y \subset \Sym(V)$ then we write $V^Y$ for the set of fixed points
 of the action of $Y$ on $V$. If $V$ is a vector space and  $Y\subset \GL(V)$ then $V^Y$ is a subspace of $V$, and $Y$ is a fixed point set \ii $V^y\neq 0$ for every $y\in Y$.   If $V$ is a $KG$-module (where $K$ is a field) and $H\subset G$ a subgroup then $V|_H$ is the restriction of $V$ to $H$.

\subsection{Linear Algebraic Groups} We take this opportunity to set our conventions for the remainder of the paper and acknowledge that there are places where the standards of the finite group theory and Lie theory community overlap -- for example the symbol $A_n$ is standard notation for both the alternating group on $n$ letters and for one of the simple, simply connected algebraic groups of rank $n$.   However, it should be clear from the context what is meant.

Recall that simple simply connected algebraic groups partition to types denoted by $A_n$, $B_n$,  $C_n$, $D_n$, $E_6$, $E_7$, $E_8$, $F_4$, $G_2$ which naturally correspond to types of simple Lie algebras over $\C$. The subscript is called the rank of the group and Lie algebra. To every simple Lie algebra of rank $n$ corresponds a weight lattice $\Lambda$, which is a $\Z$-lattice of rank $n$ whose elements are called weights. Fixing a basis one can write every weight as $(z_1, \dots, z_n)$, where $z_1, \dots,  z_n \in \Z$. The weights
$\omega_i:=(0, \dots,  0,1,0, \dots, 0)$, $i=1, \dots,  n$, are called fundamental weights.  The weights $(z_1, \dots,  z_n)$ with non-negative entries are called dominant and we write $\Lambda^+$ for the set of dominant weights. The Lie algebra determines subsets of $\al_1\ld \al_n\in\Lambda$ called simple roots. The expressions of them in terms of $\om_1\ld \om_n$ is given in \cite[Planchees I-IX]{Bo}. Denote by $\mathcal{R}$  the sublattice of all $\Z$-linear combinations of simple roots and by  $\mathcal{R}^+$ the subset of $\Z$-linear combinations with non-negative coefficients.
The lattice $\Lambda$ is endowed with a partial ordering, which we denote by $\succeq$.
Specifically, $\lam\succeq\mu$ \ii $\lam-\mu\in\mathcal{R}^+$.

The same weight system is assigned to the respective simple algebraic group $\mathbf{G}$. The irreducible representations of $\mathbf{G}$ are parameterized by the dominant weights.  
We set $\si_q:=(q-1)(\om_1+\cdots+\om_n)$, where $q$ is a prime power.

\section{Preliminary and Known Results}

In this short section we review what is known and provide some additional results that will be of use in the proofs of the main theorems below.

\bl{tt1}
Let F be a field of characteristic $2$ and G a finite group. Let $\be$ be the Brauer character of a non-trivial absolutely \ir $FG$-module V. Suppose that $\be(g)$ is an integer for every odd order element $g\in G$. Then    $\rho(G)$ is equivalent to a \rep into $\Sp_d(2)$.  \el

\bp If $\be(g)\in\Z$ then $\be(g)\pmod 2\in\F_2$ and $\rho$ is  equivalent to a \rep into $\GL_d(2)$ by \cite[Ch. I, Theorem 19.3]{Fe}. In addition,
$V$ is self-dual  \cite[Ch. IV, Lemma 2.1(iv)]{Fe}, and hence $G$ preserves a non-degenerate alternating form on $V$ \cite[Ch. IV, Theorem 11.1 and Corollary 11.2]{Fe}. \enp

\bp[Proof of Theorem {\rm \ref{l2q}}] Suppose first that $G={\rm L}_2(q)$.
As $4|(q+1)$, the integer $(q-1)/2$ is odd and $q\geq 7$. Note that every ordinary \ir character of $G$ of degree $q+1$ is induced from a non-trivial one-dimensional character $\tau$ of $B$, a Borel subgroup of $G$, see \cite[\S 3]{spr} or elsewhere.  Then $\tau(b)$ for every $b\in B$ is a $(q-1)/2$-root of unity (as $|B|=q(q-1)/2$). In fact, $\tau^G$ is \ir whenever $\tau\neq 1_B$ (loc. cit.). As $|G|_2$ divides $q+1$, every character $\tau^G$ is of defect 0, and hence $\tau^G$ restricted to the odd order elements of $G$ is an \ir Brauer character. By the character table of $G$ we have $\tau^G(g)\in\Z$ for  $g\in G$ with $|g|$ odd unless $|g|$ divides $(q-1)/2$ and hence  $g$ is conjugate to an element of $B$. Let $g\in B$. Then $\tau^G(g)=\tau(g)+\tau(g)\up$. This is an integer \ii $\tau(g)^3=1$. As $\tau$ is non-trivial, this requires $3|(q-1)$, and if this holds then there is $\tau\neq 1_B$ with $\tau(g)^3=1$ for every $b\in B$. By Lemma \ref{tt1}, for this $\tau$ the Brauer character $\tau^G$  is the character of a \rep  $G\rightarrow \Sp_{q+1}(2)$.

Let $G=\PGL_2(q)$. Note that every odd order element  $g\in G$ lies in ${\rm L}_2(q)$, but $C_G(g)$ does not.
\itf that $G$ and $L_2(q)$ have the same number of conjugacy classes of odd order elements, and hence the same numbers of  2-modular \ir representations.  By Clifford's theorem, distinct \ir \reps of $G$ have no common \ir constituent under restriction to ${\rm L}_2(q)$, and hence every 2-modular \irr of $G$ is \ir on ${\rm L}_2(q)$. As the fixed point property is to be examined at the odd order elements only, the result follows from that for ${\rm L}_2(q)$.
\enp

We will make use of the following results extensively in later sections of the paper.

\begin{lem}\label{st2} {\rm \cite[\S 3.1 and \S 3.7]{Hu1}}
Let ${\mathbf G}$ be a simple simply connected algebraic group in defining characteristic $p$, let $\mathbf {G}_{\C}$ be a simple simply connected algebraic group of the same type as ${\mathbf G}$ and $L$ the Lie algebra of $\mathbf {G}_{\C}$. Let $\si_q:=(q-1,\dots,q-1)$ be an element of the weight system of ${\mathbf G}$, and let  $\Phi_q$ be an irreducible representation of ${\mathbf G}$ with highest weight $\si_q$.

 Then the weights of $\Phi_q$ are the same as the weights of an irreducible representation of the Lie algebra $L$ of the same type as ${\mathbf G}$ with highest weight $\si_q$.
\end{lem}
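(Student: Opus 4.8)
The plan is to prove the slightly stronger statement that $\Phi_q$ and the complex irreducible $V_{\C}(\si_q)$ have the \emph{same formal character}, hence in particular the same set of weights, by a dimension squeeze. Throughout, identify the weight systems of $\mathbf G$ and $\mathbf G_{\C}$ via their common type; let $N$ be the number of positive roots, set $\rho:=\om_1+\cdots+\om_n$ (the half-sum of the positive roots), so that $\si_q=(q-1)\rho$ and $\si_q+\rho=q\rho$; and write $L(\lambda)$ for the irreducible $\mathbf G$-module of highest weight $\lambda$, so $\Phi_q=L(\si_q)$.

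First I would compute the two dimensions. On the Lie-algebra side, Weyl's dimension formula gives
\[
\dim V_{\C}(\si_q)=\prod_{\alpha>0}\frac{\langle\si_q+\rho,\alpha^{\vee}\rangle}{\langle\rho,\alpha^{\vee}\rangle}=\prod_{\alpha>0}\frac{\langle q\rho,\alpha^{\vee}\rangle}{\langle\rho,\alpha^{\vee}\rangle}=q^{N}.
\]
On the algebraic-group side, $\Phi_q=L(\si_q)$ is the Steinberg module, and for $q=p^{e}$ Steinberg's tensor product theorem gives $L(\si_q)\cong L((p-1)\rho)\otimes L((p-1)\rho)^{[p]}\otimes\cdots\otimes L((p-1)\rho)^{[p^{e-1}]}$, where $L((p-1)\rho)$ equals the characteristic-$p$ Weyl module $V_p((p-1)\rho)$ (the classical irreducibility of the Steinberg module in defining characteristic), of dimension $p^{N}$ by the same Weyl formula; hence $\dim\Phi_q=p^{Ne}=q^{N}=|\mathbf G(\F_q)|_{p}$.

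Next I would use the universal comparison between modular irreducibles and Weyl modules: for every dominant $\lambda$, $L(\lambda)$ is the simple head of $V_p(\lambda)$, whose formal character is the Weyl character and hence equals $\operatorname{ch}V_{\C}(\lambda)$; therefore $\dim L(\lambda)_{\mu}\le\dim V_{\C}(\lambda)_{\mu}$ for every weight $\mu$. Taking $\lambda=\si_q$ and summing over all $\mu$,
\[
q^{N}=\sum_{\mu}\dim L(\si_q)_{\mu}\ \le\ \sum_{\mu}\dim V_{\C}(\si_q)_{\mu}=q^{N},
\]
so every term of the inequality is an equality: $\dim L(\si_q)_{\mu}=\dim V_{\C}(\si_q)_{\mu}$ for all $\mu$. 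This gives the equality of formal characters, and a fortiori the equality of weight sets claimed in the lemma, the identification of weight lattices being canonical since $\mathbf G$ and $\mathbf G_{\C}$ have the same type.

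None of these steps is genuinely hard; the inputs — Weyl's dimension and character formulas, characteristic-independence of Weyl-module characters, irreducibility of the ordinary Steinberg module in defining characteristic, and Steinberg's tensor product theorem — are standard. The one point to be careful about is that the squeeze must be run weight space by weight space rather than on total dimensions only, which is valid precisely because the termwise bound $\dim L(\si_q)_{\mu}\le\dim V_{\C}(\si_q)_{\mu}$ holds for \emph{every} $\mu$. A more combinatorial route would instead write $\operatorname{wt}(\Phi_q)=\sum_{i=0}^{e-1}p^{i}\operatorname{wt}(L((p-1)\rho))$ via the tensor product theorem and then prove that this ``base-$p$'' sum of the (saturated) weight set of the Steinberg module is again saturated with maximal element $\si_q$; the saturation bookkeeping — carrying across $\alpha$-strings — is where I would expect the real obstacle to lie, so the dimension squeeze seems preferable.
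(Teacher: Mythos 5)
Your argument is correct. Note that the paper offers no proof of this lemma at all -- it is stated as a quotation of Humphreys, \cite[\S 3.1 and \S 3.7]{Hu1}, where the underlying fact is that the Steinberg module coincides with the Weyl module of highest weight $(q-1)\rho$, so that its formal character is the characteristic-zero Weyl character. Your dimension squeeze is exactly the standard proof of that coincidence: the Weyl dimension count $\dim V_{\C}((q-1)\rho)=q^{N}$, the count $\dim L((q-1)\rho)=q^{N}$ via Steinberg's tensor product theorem together with the irreducibility of the restricted Steinberg module $L((p-1)\rho)=V_p((p-1)\rho)$, and the termwise inequality $\dim L(\lambda)_{\mu}\le\dim V_p(\lambda)_{\mu}=\dim V_{\C}(\lambda)_{\mu}$ coming from $L(\lambda)$ being a quotient of the Weyl module, forced into equality weight space by weight space. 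The only caveat is that your key input for the base case $q=p$ is itself the $q=p$ instance of the statement being proved, so your write-up is best read as a reduction of the general $q=p^{e}$ case to the classical restricted case plus the tensor product theorem; that is a perfectly standard and acceptable structure, and you are right that it is preferable to the combinatorial saturation bookkeeping you sketch at the end.
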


\begin{lem}\label{mm2}
Let $\lam,\lam',\lam-\lam'\in \Lambda^+$ and $\lam-\lam'\in \mathcal{R}.$ Then $\lam\succeq\lam'$.
\end{lem}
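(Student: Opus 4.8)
The plan is to observe that the hypothesis that $\lambda$ and $\lambda'$ are \emph{separately} dominant is irrelevant: set $\mu := \lambda-\lambda'$, so that the assertion $\lambda\succeq\lambda'$ is by definition just $\mu\in\mathcal{R}^+$, and the only hypotheses actually needed are $\mu\in\Lambda^+$ and $\mu\in\mathcal{R}$. Thus I would prove the standard fact that a dominant weight lying in the root lattice is a non-negative integral combination of simple roots.

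First I would introduce the $W$-invariant inner product $(\cdot,\cdot)$ on $\Lambda\otimes_{\Z}\R$, which is positive definite (this is the Euclidean space of the root system of $\mathbf{G}$), and record two classical facts: that dominance of $\mu$ is equivalent to $(\mu,\alpha_i)\geq 0$ for all $i$ (because $(\mu,\alpha_i)$ is a positive multiple of $\langle\mu,\alpha_i^\vee\rangle=z_i$, the $i$-th coordinate in the basis $\om_1,\dots,\om_n$ of Section 2), and that $(\alpha_i,\alpha_j)\leq 0$ whenever $i\neq j$.

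Next, since $\mu\in\mathcal{R}$, write $\mu=\sum_{i=1}^n c_i\alpha_i$ with $c_i\in\Z$, and let $S=\{\,i : c_i<0\,\}$; the goal is to show $S=\emptyset$. Suppose not, and put $\mu_-:=-\sum_{i\in S}c_i\alpha_i$, a nonzero element of $\mathcal{R}^+$, together with $\mu_+:=\sum_{i\notin S}c_i\alpha_i$, so $\mu=\mu_+-\mu_-$. On one hand $(\mu,\mu_-)=\sum_{i\in S}(-c_i)(\mu,\alpha_i)\geq 0$ by dominance of $\mu$. On the other hand $(\mu,\mu_-)=(\mu_+,\mu_-)-(\mu_-,\mu_-)$, where $(\mu_+,\mu_-)\leq 0$ since it expands as a sum of terms $c_i(-c_j)(\alpha_i,\alpha_j)$ over $i\notin S$, $j\in S$ (hence $i\neq j$) with all three factors of the correct sign, while $(\mu_-,\mu_-)>0$ by positive definiteness because $\mu_-\neq 0$. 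Hence $(\mu,\mu_-)<0$, a contradiction. Therefore every $c_i\geq 0$, i.e.\ $\mu\in\mathcal{R}^+$, which is exactly $\lambda\succeq\lambda'$.

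I do not anticipate a genuine obstacle: this is a classical lemma (it appears, for instance, in Bourbaki and in Humphreys' Lie algebras book). The only points demanding care are the translation between the coordinate description of dominance fixed in Section 2 and the inner-product inequality $(\mu,\alpha_i)\geq 0$, and the sign bookkeeping in the two evaluations of $(\mu,\mu_-)$.
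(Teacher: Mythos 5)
Your proof is correct, and it takes a genuinely different route from the paper's. The paper argues by inspecting the Bourbaki plates: every fundamental weight $\om_i$ is a non-negative \emph{rational} combination of the simple roots (equivalently, the inverse Cartan matrix has non-negative entries), so the dominant weight $\mu=\lam-\lam'$ is a non-negative rational combination $\sum b_i\al_i$; since $\mu\in\mathcal{R}$ and the simple roots are linearly independent, the $b_i$ are integers, hence non-negative integers, i.e.\ $\mu\in\mathcal{R}^+$. You instead run the classical inner-product argument (Humphreys' Lie algebras book, \S 13): writing $\mu=\mu_+-\mu_-$ according to the signs of the coefficients and pairing with $\mu_-$, dominance gives $(\mu,\mu_-)\geq 0$ while obtuseness of distinct simple roots and positive definiteness give $(\mu,\mu_-)<0$ unless $\mu_-=0$. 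Your sign bookkeeping is right, and your observation that only the dominance of $\mu$ (not of $\lam$ and $\lam'$ separately) is used matches what the paper actually exploits. The trade-off: the paper's proof is shorter but rests on a type-by-type table inspection (or the known positivity of the inverse Cartan matrix), whereas yours is uniform, case-free, and self-contained modulo the two standard facts $(\mu,\al_i)\geq 0 \Leftrightarrow$ dominance in the $i$-th coordinate and $(\al_i,\al_j)\leq 0$ for $i\neq j$. Either is perfectly adequate here.
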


\bp By inspection of the expressions of fundamental weights in terms of simple roots \cite[Planches 1-IX]{Bo} one observes that
all these expressions have non-negative rational coefficients. Let $\om_0$ denote the zero weight. By assumption, $\lam-\lam'$ is a radical dominant weight, so $\lam-\lam'=\sum b_i\al_i$, where $b_i\in{\mathbb Q}, b_i\geq 0$. On the other hand, as simple roots are linear independent,  $\lam-\lam'\in \mathcal{R}$ means that all $b_i$ are integers. Therefore, $\lam-\lam'\succ \om_0$. This is equivalent to  $\lam\succeq\lam'$.\enp

The following lemma is well known.

\begin{lem}\label{wr2} Let $\mathbf {G}$ be a simple algebraic group and V a rational $\mathbf {G}$-module. If V has zero weight  then $V^g\neq 0$ for every element $g\in \mathbf {G}$. \el

\begin{thm}\label{z1} {\rm\cite[Theorems 1 and 3]{z90}}\label{z90} Let G be a finite Chevalley group G in defining characteristic $p>0$, and let
 $\St_G$ be the Steinberg representation of G  over the complex numbers. Suppose that $p>2$ or $p=2$
 and G is of type  $G_2(q)$, $E_6(q)$, ${}^2E_6(q)$, $E_8(q)$, $F_4(q)$, ${}^2F_4(q)$, ${}^3D_4(q)$, $C_n(q)$ $(n=4k$ or $4k-1$, $k=1, 2\ldots)$, $D_n(q)$, ${}^2D_n(q)$ $(n=4k$ or
$4k+3 $, $k=1, 2\ldots )$, $A_n(q)$, ${}^2A_n(q)$ $(n$ even). Then for every torus  $T$ of $G$
the trivial representation $1_T$ of $T$  is a constituent in the restriction  of
$\St_G$ to $T$.\end{thm}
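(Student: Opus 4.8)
The plan is to prove, for each maximal torus $T$ of $G$, that the multiplicity $\langle \St_G|_T, 1_T\rangle$ is strictly positive; since this multiplicity is a non-negative integer, it suffices to show the relevant character sum is nonzero with the correct sign. Recall that the maximal tori of $G$ are parametrized by $F$-conjugacy classes of the Weyl group $W$, so ``for every torus'' means ``for every $T=T_w$,'' and the argument must cover all classes uniformly. By Frobenius reciprocity the multiplicity equals $\frac{1}{|T|}\sum_{t\in T}\St_G(t)$, and because every element of a maximal torus is semisimple I may substitute the classical value of the Steinberg character on semisimple elements, $\St_G(t)=\varepsilon_G\,\varepsilon_{C}\,|C^F|_p=\varepsilon_G\varepsilon_C\,q^{|\Phi_t^+|}$, where $C=C_G^{\circ}(t)$, $\Phi_t=\{\alpha:\alpha(t)=1\}$ is the subsystem of roots trivial on $t$, and $\varepsilon_H=(-1)^{\text{($\F_q$-rank of $H$)}}$. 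This turns the problem into the exact evaluation of a signed sum of powers of $q$ over $T$.

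First I would stratify $T$ according to the subsystem $\Phi_t$, rewriting the sum as $\varepsilon_G\sum_{\Psi}\varepsilon_{C(\Psi)}\,q^{|\Psi^+|}\,N_T(\Psi)$, where $\Psi$ ranges over the subsystems arising as some $\Phi_t$, $C(\Psi)$ is the corresponding centralizer, and $N_T(\Psi)=\#\{t\in T:\Phi_t=\Psi\}$. Each count $N_T(\Psi)$ is a polynomial in $q$, obtained by M\"obius inversion over the lattice of closed subsystems from the numbers $\#\{t\in T:\alpha(t)=1\ \forall\alpha\in\Psi\}$, which are themselves products of cyclotomic factors determined by the action of the twisted Frobenius $wF$ on the cocharacter lattice. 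Because the Steinberg representation has trivial central character, the central elements all contribute with sign $\varepsilon_G\varepsilon_G=+1$, giving a positive term $|Z\cap T|\,q^{N}$ with $N=|\Phi^+|$; for large $q$ this is the dominant contribution and already forces $\langle\St_G|_T,1_T\rangle>0$.

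The heart of the argument, and the main obstacle, is to show that the non-central strata cannot cancel this leading term for \emph{all} $q$ simultaneously — in particular for the small values of $q$ where the naive degree estimate does not by itself guarantee dominance. This requires an exact evaluation of the signed sum, whose delicacy lies entirely in the signs $\varepsilon_{C(\Psi)}=(-1)^{\text{$\F_q$-rank}(C(\Psi))}$. These cannot be read off from $T$ alone: although each $C(\Psi)$ contains the fixed maximal torus $T=T_w$, its $\F_q$-rank is the maximal split rank over \emph{all} of its maximal tori, and so depends on how $wF$ acts on $\Psi$ and on its orthogonal complement. I would compute these ranks type-by-type from the explicit root data, treating the twisted types ${}^2A_n$, ${}^2D_n$, ${}^2E_6$, ${}^2F_4$, ${}^3D_4$ through the induced diagram automorphism. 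It is precisely this sign bookkeeping that is sensitive to parity: for $p>2$ the signs align uniformly for every type, while for $p=2$ they align for the exceptional types, for $A_n$ and ${}^2A_n$ with $n$ even, and for $C_n$, $D_n^{\pm}$ with $n\equiv 0,3\pmod 4$, so that the exact sum stays positive; for the remaining residues the method is inconclusive — which is exactly why those cases are excluded here and are instead handled by the weight-theoretic arguments developed later in the paper.

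As an independent check on the sign computation I would use the realization of $\St_G$ as the top reduced homology $\widetilde{H}_{r-1}(\Delta;\C)$ of the Tits building $\Delta$ (Solomon--Tits), where $r$ is the rank. Since $|T|$ is prime to $p$ and we work over $\C$, taking $T$-invariants is exact and commutes with passage to the quotient, so $\dim\St_G^T=\dim\widetilde{H}_{r-1}(\Delta/T;\C)$ and hence $\langle\St_G|_T,1_T\rangle=(-1)^{r-1}\widetilde{\chi}(\Delta/T)$. This recasts the positivity as a reduced Euler characteristic computation for the quotient of the building by the torus action, giving a geometric cross-check on the combinatorial sign analysis above.
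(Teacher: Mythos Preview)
This theorem is not proved in the paper: it is quoted verbatim from \cite[Theorems~1 and~3]{z90} and placed in the ``Preliminary and Known Results'' section as an input to the later arguments. There is therefore no in-paper proof to compare against.

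As to your outline itself: the strategy is the standard one and is indeed the approach of the cited source --- evaluate $\langle \St_G|_T,1_T\rangle$ using the Curtis--Deligne--Lusztig formula $\St_G(t)=\varepsilon_G\varepsilon_{C_G^\circ(t)}|C_G^\circ(t)^F|_p$ and stratify $T$ by centralizer type. Your identification of the crux is correct: the whole difficulty is the sign $\varepsilon_{C(\Psi)}$, and the parity constraints on $n$ in characteristic $2$ are exactly where negative contributions can kill the leading term. What you have written, however, is a plan rather than a proof. The sentences ``I would compute these ranks type-by-type'' and ``the signs align uniformly'' are precisely the content of the theorem; nothing in your text establishes them. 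In particular, you give no mechanism that distinguishes $n\equiv 0,3\pmod 4$ from $n\equiv 1,2\pmod 4$ for $C_n$ and $D_n^\pm$, nor $n$ even from $n$ odd for $A_n$, ${}^2A_n$ --- you simply assert that the dichotomy appears. A genuine proof has to exhibit, for each $T_w$, either a closed-form evaluation of the signed sum or a weight/orbit argument forcing a trivial constituent; your Solomon--Tits reformulation is correct but only restates the same signed Euler-characteristic computation in geometric language without advancing it.
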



\begin{lem}\label{su9} Let $G = {}^2B_2(q)$. Then the
  $2$-modular Steinberg representation $\St_q$ of G is unisingular.
 Moreover, if $g\in G$ is an odd order element then $\St_q(g)$ has exactly $|g|$ distinct eigenvalues.
\end{lem}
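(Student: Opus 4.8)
The plan is to prove the ``moreover'' clause first --- it immediately yields unisingularity at elements of odd order --- and then to bootstrap to all of $G$ using projectivity of the Steinberg module. Throughout, write $q=2^{2n+1}\geq 8$ and $r=2^{n+1}$ (so $r^2=2q$), and recall that $|G|=q^2(q^2+1)(q-1)$, so $|G|_2=q^2=\dim\St_q$.

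First I would assemble the standard structural facts about $G={}^2B_2(q)$ (Suzuki's original paper, or Carter's book on finite groups of Lie type): every odd-order element of $G$ is semisimple and conjugate into one of the three cyclic maximal tori, of orders $q-1$, $q-r+1$, $q+r+1$; these three integers are odd, pairwise coprime, satisfy $(q-r+1)(q+r+1)=q^2+1$, and each is $<q^2$; and the centralizer in $G$ of a nontrivial semisimple element is precisely the (cyclic, $2'$) maximal torus containing that element, so $|C_G(s)|_2=1$ for every such $s$. The last point is the one input genuinely special to ${}^2B_2(q)$ --- this group falls outside the list covered by Theorem~\ref{z90} and so requires a separate treatment --- and it is the step I expect to carry the real content; everything else will be formal.

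Next, for $g\in G$ of odd order $d>1$ (the case $d=1$ being trivial) I would compute the eigenvalues of $\St_q(g)$. Since $\langle g\rangle$ is a $2'$-group, the Brauer character of $\St_q|_{\langle g\rangle}$ is the restriction to $\langle g\rangle$ of the ordinary Steinberg character $\St_G$. By the classical value formula for $\St_G$ at semisimple elements together with the facts above, $\St_G(1)=q^2$ while $\St_G(g^j)=\pm1$ for $1\leq j\leq d-1$, with one and the same sign $\varepsilon$ throughout: each such $g^j$ is a nontrivial element of the single maximal torus $T$ containing $g$, and $C_G(g^j)=T$. A routine character-orthogonality computation over the cyclic group $\langle g\rangle$ then shows that the eigenvalue $1$ of $\St_q(g)$ occurs with multiplicity $(q^2+\varepsilon(d-1))/d$ and each nontrivial $d$-th root of unity with multiplicity $(q^2-\varepsilon)/d$; both are positive because $d<q^2$, and being multiplicities they are positive integers. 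Since $\St_q(g)^d=1$ confines the spectrum to $d$-th roots of unity, $\St_q(g)$ has exactly $d=|g|$ distinct eigenvalues; in particular $V^g\neq 0$, where $V=\St_q$.

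Finally, to pass from odd order to an arbitrary $g\in G$ I would use that $\St_q$ is a projective $\F_2G$-module. Write $g=su=us$ with $s$ the $2'$-part and $u$ the $2$-part, both in $\langle g\rangle$, and let $e=|s|^{-1}\sum_{c\in\langle s\rangle}c$, a central idempotent of $\F_2\langle g\rangle$ with $eV=V^s$. Then $eV$ is a direct summand of the projective $\F_2\langle g\rangle$-module $V|_{\langle g\rangle}$, hence itself projective; since $\langle s\rangle$ acts trivially on $eV$, restriction to $\langle u\rangle$ makes $eV$ a free $\F_2\langle u\rangle$-module, so $\dim V^g=\dim(V^s)^u=|u|^{-1}\dim V^s$. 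By the previous step $\dim V^s>0$ ($s=1$ being trivial), hence $\dim V^g>0$, i.e.\ $\det(\St_q(g)-1)=0$. This proves $\St_q$ unisingular and completes the plan. I anticipate no essential difficulty beyond the structural facts collected at the outset; in particular the Steinberg character formula and the projectivity of $\St_q$ are entirely classical. (An alternative to the character computation would be to identify $\St_q$ with the nontrivial constituent of the $\F_2$-permutation module on the $q^2+1$ points of the Suzuki ovoid and read off the eigenvalues of $g$ from its cycle type there --- two fixed points if $g$ lies in a split torus, none otherwise --- but this still needs the torus/centralizer information of the second paragraph.)
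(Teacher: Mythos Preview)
Your proof is correct and follows essentially the same route as the paper: deduce the eigenvalue claim for odd-order elements from the values of the ordinary Steinberg character (the paper simply cites this from \cite[2.8]{z90}, whereas you carry out the computation via the formula $\St_G(s)=\pm|C_G(s)|_2$ and the fact that nontrivial semisimple centralizers in ${}^2B_2(q)$ are the tori themselves), and then pass to characteristic~$2$ using that $\St$ has $2$-defect~$0$. Your final paragraph extending unisingularity to arbitrary $g$ via projectivity of the Steinberg module is correct but heavier than necessary: in characteristic~$2$ the eigenvalues of $g=su$ coincide with those of its odd-order part $s$, so unisingularity at odd-order elements already yields $\det(\St_q(g)-1)=0$ for all $g$.
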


\begin{proof}
In \cite[2.8]{z90},  the second claim is deduced from the character table of $G$ over $\C$. This remains true in characteristic 2 as the  Steinberg representation of $G$ over $\C$ is of 2-defect 0.
\end{proof}

\begin{lem} \label{sl2}
The group $G=\SL_2(q)$ with $q$ even has no non-trivial unisingular $2$-modular irreducible representation.\end{lem}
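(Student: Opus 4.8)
The plan is to describe the $2$-modular irreducible representations of $\SL_2(q)$ explicitly via Steinberg's tensor product theorem, and then to exhibit a single semisimple element on which none of the non-trivial ones has the eigenvalue $1$. Since whether a representation is unisingular is unaffected by extension of scalars, and an irreducible $\F_2\SL_2(q)$-module becomes over $\overline{\F}_2$ a direct sum of Galois-conjugate absolutely irreducible modules (all of dimension $\ge 2$ if the original module is non-trivial), it suffices to treat an absolutely irreducible $\overline{\F}_2\SL_2(q)$-module $W$. Writing $q=2^f$ and letting $V$ be the natural $2$-dimensional module, Steinberg's theorem identifies the non-trivial such $W$ with the modules
\[
W_S:=\bigotimes_{i\in S}V^{(2^i)},\qquad \emptyset\neq S\subseteq\{0,1,\dots,f-1\},
\]
where $V^{(2^i)}$ is the $i$-th Frobenius twist of $V$; here $\dim W_S=2^{|S|}$.

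Next I would take $g$ to be a generator of a non-split maximal torus $C\cong C_{q+1}$ of $\SL_2(q)$; recall that $|\SL_2(q)|=q(q-1)(q+1)$ and that $C$ is cyclic of order $q+1$, which is odd. It is standard that over $\overline{\F}_2$ this $g$ acts on $V$ with eigenvalues $\xi,\xi^{-1}$ for some $\xi\in\overline{\F}_2^{\times}$ of multiplicative order exactly $q+1$. Hence $g$ acts on $V^{(2^i)}$ with eigenvalues $\xi^{\pm 2^i}$, and on $W_S$ with the $2^{|S|}$ eigenvalues $\xi^{e}$, where $e$ ranges over $\mathcal{E}_S:=\{\,\sum_{i\in S}\varepsilon_i 2^i : \varepsilon_i\in\{\pm1\}\,\}$.

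The proof then comes down to an elementary fact about binary expansions. Writing $e\in\mathcal{E}_S$ as $e=\sum_{i\in S^+}2^i-\sum_{i\in S^-}2^i$ for a partition $S=S^+\sqcup S^-$, the uniqueness of binary expansions forces the two sums to differ whenever $S\neq\emptyset$, so $e\neq 0$; on the other hand $|e|\le\sum_{i\in S}2^i\le 2^f-1=q-1<q+1$. Hence $q+1\nmid e$ and $\xi^{e}\neq 1$ for every $e\in\mathcal{E}_S$, so $1$ is not an eigenvalue of $g$ on $W_S$, i.e.\ $\det(g-1)\neq 0$. Therefore no non-trivial $2$-modular irreducible representation of $\SL_2(q)$ is unisingular.

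I do not expect a genuine obstacle: both ingredients — Steinberg's description of the irreducibles in defining characteristic and the existence of a cyclic non-split torus of order $q+1$ — are standard, and the arithmetical step is immediate. The one point worth stating carefully is that the \emph{same} element $g$ must defeat all the $W_S$ simultaneously, which is exactly what the subset-sum statement delivers; this is also precisely where the hypothesis that $q$ is even is used (for odd $q$ the module $\St_q$ of $\SL_2(q)$ carries the zero weight and is unisingular by Lemma~\ref{wr2}).
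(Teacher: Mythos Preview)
Your proof is correct and more self-contained than the paper's. The paper argues in two steps with two different elements: first, citing a result of Kondratiev--Osinovskaia--Suprunenko, it observes that an element $g$ of order $q-1$ has eigenvalue $1$ only on the Steinberg module $\St_q$ (highest weight $\sigma_q$), which disposes of every other irreducible; second, it notes from the character table that an element $h$ of order $q+1$ fails to have eigenvalue $1$ on $\St_q$, the passage to characteristic $2$ being justified because the ordinary Steinberg character has $2$-defect $0$. Your approach instead uses a single element of order $q+1$ and computes the eigenvalues on every $W_S$ directly via Steinberg's tensor product theorem, reducing the question to the elementary observation that a non-empty signed sum $\sum_{i\in S}\pm 2^i$ is non-zero and bounded in absolute value by $q-1<q+1$. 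This avoids the external citation and gives uniform control over all irreducibles; the paper's route is quicker to state but leans on a quoted result. Your insistence on a \emph{single} witness $g$ is also what makes the descent from $\overline{\F}_2$-irreducibles to $\F_2$-irreducibles immediate, since a direct sum of non-unisingular modules need not itself be non-unisingular unless the same element defeats each summand.
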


\begin{proof}
Let $g\in G$ be of order $q-1$. By \cite[Corollary]{KOS}, the irreducible representation  with highest weight $\si_q$ is the only irreducible representation $\phi$ such that $\phi(g)$ has eigenvalue 1.  It is well known that $\St(h)$ does not have eigenvalue 1 for $h\in G$ with $|h|=q+1$ (one can inspect the character table of $G$). This remains true in characteristic 2 as the  Steinberg representation of $G$
over $\C$ is of defect 0.\end{proof}

\bl{ag3}
The group $H = \AGL_2(3)$ has exactly four  $2$-modular \ir representations, whose degrees are $1,2,8,16$. The group $G=\ASL_2(3)$ has exactly four  $2$-modular \ir representations, whose degrees are $1,8,8,8$;  one of those of degree $8$ can be realized over $\F_2$.
\el

\bp
For $H$ and $G$ we use the information from
\begin{center}
\href{https://people.maths.bris.ac.uk/~matyd/GroupNames/432/AGL(2,3).html}{https://people.maths.bris.ac.uk/$\sim$matyd/GroupNames/432/AGL(2,3).html},

\href{https://people.maths.bris.ac.uk/~matyd/GroupNames/193/ASL(2,3).html}{https://people.maths.bris.ac.uk/$\sim$matyd/GroupNames/193/ASL(2,3).html},
\end{center}
respectively. Note that $H$ has 4 conjugacy classes of elements of odd order. As the number of
$2$-modular \ir representations of any finite group equals the number of conjugacy classes of odd order elements, the first claim in the statement follows. Furthermore, every \ir character of $H$ restricted to the odd order elements  is a sum of \ir Brauer characters of $H$. Easy computation based on this fact
reveals that the \ir characters of degree not equal to 2,8,16 are reducible modulo 2, and the characters of the same degree coincide on the odd order elements.

For $G$, we deduce that all \ir 2-modular \reps lift to characteristic 0. The second claim follows
from the fact that one of the Brauer characters of degree 8 takes rational  values.
\enp

\begin{rmk}
From Lemma \ref{ag3} we deduce that each $G\in \{ \AGGL_1(9)$, $\AGL_1(9)$,  $\PSL_3(2)\}$ has a unique $2$-modular \irr of degree $8$ and no \irr of greater degree.  Moreover, the groups
$$
\AGGL_1(9),\  \AGL_1(9),\  \PSL_3(2), \text{and} \ \ASL_2(3)
$$
are the only subgroups of $H$ with a $2$-modular \irr of degree $8$.  This follows from analyzing the maximal subgroup tree of the sites listed above.
\end{rmk}

\begin{rmk}
Note that $\AGL_2(3)\cong \Aut \PSU_3(2) \cong \PSU_3(2):S_3$.
The fact that $\PSU_3(2)$ has a unique 2-modular \irr of degree 8 is a special case
of a general result on the Steinberg \rep of an arbitrary group of Lie type. In addition,
if an \irr of a centerless group $H$ is unique, it extends to a projective \rep of $\Aut(H)$.
In our case this is equivalent to an ordinary representation.
\end{rmk}

\begin{lem}\label{au1} Let $G$ be an \ir fixed point subgroup of $\Sp_{2n}(q)$, for $q$ even. Then the following statements hold:

$(1)$ $Z(G)=1$;

$(2)$ $G$ has no non-trivial normal $2$-subgroup; moreover, if $G$ is   a subgroup of $X\subset \Sp_{2n}(q)$
then $X$ has no non-trivial normal $2$-subgroup;

$(3)$ every odd order element of $\Sp_{2n}(q)$ is conjugate to its inverse;

$(4)$ every abelian subgroup of $G$ is reducible. 
\end{lem}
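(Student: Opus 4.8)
The plan is to derive all four parts from Schur's lemma together with the defining property of a fixed-point subgroup. Write $V=\F_q^{2n}$ for the natural module, put $p=\operatorname{char}\F_q=2$, and, using that $G$ acts irreducibly, set $E=\End_{\F_q G}(V)$, which by Schur's lemma is a finite extension field of $\F_q$; thus $V$ is an $E$-vector space of dimension $2n/[E:\F_q]$. For $(1)$, an element $z\in Z(G)$ centralizes $G$, so its image in $\End_{\F_q}(V)$ lies in $E$; writing $\zeta\in E^{\times}$ for that image, $z-1$ acts on $V$ as multiplication by $\zeta-1\in E$, so $\det_{\F_q}(z-1)=N_{E/\F_q}(\zeta-1)^{2n/[E:\F_q]}$. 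Since $G$ is fixed point this determinant is $0$, and the norm of a nonzero element of a field is nonzero, so $\zeta=1$ and $z=1$. For $(2)$, if $Q$ is a non-trivial normal $2$-subgroup of $G$ then, $q$ being even, $Q$ is a non-trivial $p$-group acting in characteristic $p$, so $V^{Q}\neq 0$; as $Q\trianglelefteq G$, the subspace $V^{Q}$ is $G$-stable, hence equals $V$ by irreducibility, so $Q$ acts trivially on $V$ and $Q=1$. The same argument gives the second assertion of $(2)$: if $Q\trianglelefteq X$ with $G\le X\subseteq\Sp_{2n}(q)$, then $G$ normalizes $Q$, so $V^{Q}$ is again a non-zero $G$-submodule of $V$, hence all of $V$, forcing $Q=1$.

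Part $(4)$ is proved in the same spirit. Suppose an abelian subgroup $A\le G$ acts irreducibly on $V$. Then the image $R$ of $\F_q[A]$ in $\End_{\F_q}(V)$ is a commutative $\F_q$-algebra admitting a faithful simple module, hence is a field, with $V\cong R$ as an $R$-module; so $R\cong\F_{q^{2n}}$ and $A$ embeds into $R^{\times}$, the embedding being injective because $G$ acts faithfully. For $a\in A$ with image $\zeta_a\in R^{\times}$ one gets $\det_{\F_q}(a-1)=N_{R/\F_q}(\zeta_a-1)$, which vanishes because $A\le G$ is fixed point, so $\zeta_a=1$ and $a=1$. Hence $A=1$, which cannot act irreducibly on a space of dimension $2n\ge 2$; so every abelian subgroup of $G$ is reducible.

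Finally, $(3)$ concerns only the ambient group $\Sp_{2n}(q)$, not $G$. An odd-order element $g$ is a $p'$-element, hence semisimple, and lies in a maximal torus $T$ of the simple algebraic group $\mathbf G=\Sp_{2n}$. Since $\mathbf G$ has type $C_n$, its longest Weyl-group element $w_0$ satisfies $w_0=-1$, so $-1$ lies in the Weyl group and every element of $T$ is conjugate to its inverse by a representative of $w_0$ in $N_{\mathbf G}(T)$; in particular $g$ and $g^{-1}$ are conjugate in $\mathbf G(\overline{\F_q})$. Because $\mathbf G$ is simply connected, $C_{\mathbf G}(g)$ is connected by Steinberg's theorem, and hence, by Lang's theorem, each geometric semisimple class of $\mathbf G$ meets $\Sp_{2n}(q)$ in a single $\Sp_{2n}(q)$-conjugacy class; so $g$ is conjugate to $g^{-1}$ in $\Sp_{2n}(q)$. (Alternatively one could cite the known reality of semisimple classes of symplectic groups, or argue directly using the decomposition of the symplectic form over $\F_q[g]$.) I expect $(3)$ to be the only part requiring genuine care: parts $(1)$, $(2)$ and $(4)$ are quick consequences of Schur's lemma and the fixed-point hypothesis, whereas $(3)$ relies on the conjugacy-class structure of finite groups of Lie type.
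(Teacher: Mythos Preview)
Your proof is correct. For parts (1), (2) and (4) the paper takes a more elementary route via eigenspace stabilization: since $z$ (respectively $a$) commutes with $G$ (respectively $A$), the fixed subspace $V^{z}$ (respectively $V^{a}$) is $G$-stable (respectively $A$-stable), and the fixed-point hypothesis makes it nonzero, so irreducibility forces $V^{z}=V$ (giving $z=1$) or contradicts irreducibility of $A$. Your Schur-lemma/field-norm formulation reaches the same conclusions but is heavier than needed --- there is no need to identify $\End_{\F_qG}(V)$ or the image of $\F_q[A]$ as a field once you notice that the $1$-eigenspace is invariant under any commuting operator. For (3), the paper simply records that the result is well known and points to the self-duality of $V$ as an $\F_qH$-module for every subgroup $H\le\Sp_{2n}(q)$; your argument via $w_0=-1$ in type $C_n$, Steinberg's connectedness theorem for centralizers, and Lang's theorem is a valid and more detailed alternative route to the same fact. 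Incidentally, your treatment of the ``moreover'' clause in (2) is explicit, whereas the paper's written proof addresses only the first assertion of (2).
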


\bp Let $V$ be the underlying space of $\Sp_{2n}(q)$.
\begin{enumerate}
\item 
Suppose the contrary. Let $z\in Z(G)$ and let $V^z$ be the 1-eigenspace of $z$ on $V$. Then $G$ stabilizes $V^z$, a contradiction unless $V^z=V$, but this means that $z=\Id$.
\item Suppose the contrary. Let $S\neq 1$ be a normal  $2$-subgroup of $G$. Then $V^S\neq 0$ and
$V^S$ is $G$-stable, a contradiction.
\item This is well known. In fact, $V$ is self-dual as an $\F_q \Sp_{2n}(q)$-module, and as an $\F_q H$-module for every subgroup $H$ of $\Sp_{2n}(q)$.
\item Suppose the contrary,  let $A$ be an abelian subgroup of $\Sp_{2n}(q)$ and $1\neq a\in A$.
By assumption, $V^a\neq 0$. Then $AV^a=V^a$, a contradiction.  All subgroups of order $q^n+1$
are conjugate. 
\end{enumerate}
\enp

\begin{prop}\label{gf1} Let $G\subset \Sp_{2k}(q)$ be a maximal \ir fixed-point subgroup. Then $H=G\times \Sp_{2n}(q)$ (a ``diagonal" embedding) is a maximal  fixed-point subgroup of $\Sp_{2(k+n)}(q)$.
\end{prop}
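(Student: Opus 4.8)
The plan is to show that $H$ is a fixed-point subgroup and then that every fixed-point subgroup $X$ with $H\subseteq X\subseteq\Sp_{2(k+n)}(q)$ equals $H$. Write the underlying space as an orthogonal direct sum $V=V_1\perp V_2$ with $\dim V_1=2k$, $\dim V_2=2n$, both $V_i$ non-degenerate, so that $\Sp(V_1)\times\Sp(V_2)$ is the stabiliser of this decomposition and the ``diagonal'' embedding puts $G=G\times 1$ in $\Sp(V_1)$ and $L:=1\times\Sp_{2n}(q)$ in $\Sp(V_2)$, each factor acting trivially on the other summand. For $h=(g,s)\in H$ we have $V^h=V_1^g\oplus V_2^s\supseteq V_1^g\neq 0$ since $G$ is fixed point on $V_1$; hence $H$ is a fixed-point subgroup, and on $V$ it has exactly the composition factors $V_1$ and $V_2$, both non-trivial.

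Now let $X$ be a fixed-point subgroup with $H\subseteq X\subseteq\Sp(V)$. Since $V_1$ is $G$-irreducible, $V_2$ is $\Sp(V_2)$-irreducible, and $V_1\not\cong V_2$ as $H$-modules (the two $H$-actions have different kernels, $G$ and $L$ both being non-trivial), the only $H$-submodules of $V$, hence the only $X$-submodules, are $0,V_1,V_2,V$; and as $X$ is symplectic with $V_2=V_1^\perp$, either $X$ acts irreducibly on $V$ or $X$ stabilises both $V_1$ and $V_2$. In the second case $X\subseteq P:=\Sp(V_1)\times\Sp(V_2)$; writing $\pi_i\colon P\to\Sp(V_i)$ for the projections we have $\pi_2(X)=\Sp(V_2)$ and $L=1\times\Sp(V_2)\trianglelefteq P$, so for $(a,b)\in X$ the element $(a,1)=(a,b)(1,b^{-1})$ lies in $X$ (as $(1,b^{-1})\in L$); thus $G'\times 1\subseteq X$ where $G':=\pi_1(X)\supseteq G$, and together with $L\subseteq X$ this forces $X=G'\times\Sp(V_2)$. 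Choose $s\in\Sp(V_2)$ of order $q^n+1$: its eigenvalues on $V_2\otimes\overline{\F}_q$ are Galois conjugates of a root of unity of order $q^n+1>1$, so $V_2^s=0$. Then for every $g'\in G'$ the element $(g',s)\in X$ has $V^{(g',s)}=V_1^{g'}$, which must be non-zero because $X$ is fixed point; hence $G'$ is a fixed-point subgroup of $\Sp(V_1)$, and it is irreducible since $G\subseteq G'$. By maximality of $G$ we get $G'=G$, so $X=H$.

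It remains to exclude the possibility that $X$ acts irreducibly on $V$, and this is the crux. Here $L=\Sp(V_2)\subseteq X$ is generated by the symplectic transvections supported on $V_2$, which are long-root elements of $\Sp(V)$, so the normal closure $N:=\langle L^X\rangle\trianglelefteq X$ is generated by long-root elements of $\Sp(V)$. The subspace $V^N$ is $X$-invariant, hence $0$ or $V$; it is not $V$ since $1\neq L\subseteq N$ acts non-trivially, so $V^N=0$. By Kantor's classification of subgroups of $\Sp(V)$ generated by long-root elements, $N$ then stabilises an orthogonal decomposition $V=U_1\perp\cdots\perp U_t$, permutes the $U_j$, and induces on each $U_j$ a group of type $\Sp(U_j)$, $\OO^\pm(U_j)$ (for even $q$), $\SU$, or a symmetric group in its natural module. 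Each of these — and, after a short additional check, $N$ itself — contains an element with no eigenvalue $1$ on $V$ (a Coxeter/Singer-type element on all components simultaneously), contradicting that $N$, being a subgroup of the fixed-point subgroup $X$, is fixed point. Hence the irreducible case cannot occur and $X=H$. I expect this last step to be the only real obstacle: one must run through Kantor's list, produce the fixed-point-free element while handling permuted components and the small non-simple cases such as $\Sp_4(2)\cong S_6$ and $\Sp_2(q)$ for small $q$; the alternative is a self-contained proof that a transvection-generated subgroup of $\Sp(V)$ with no non-zero fixed vector is never unisingular.
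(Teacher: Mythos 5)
Your reducible case is complete and is in fact more carefully justified than the paper's own version. The paper merely asserts that the restriction $X_1$ of $X$ to $V_1$ ``must be fixed point'' on $V_1$; your device of pairing an arbitrary $g'\in\pi_1(X)$ with an element $s\in\Sp(V_2)$ of order $q^n+1$ acting without nonzero fixed vectors on $V_2$, so that $V^{(g',s)}=V_1^{g'}$, actually proves this, and the use of normality of $1\times\Sp(V_2)$ to split $X$ as $\pi_1(X)\times\Sp(V_2)$ is sound. The reduction to ``either $X$ is irreducible or $X$ stabilises both $V_1$ and $V_2$'' is also correct.

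The genuine gap is the irreducible case, which you yourself flag as unfinished. Both you and the paper feed this case into the classification of irreducible subgroups generated by symplectic transvections (long-root elements), but your plan is then to exhibit a fixed-point-free element inside \emph{every} group on that list, including imprimitive configurations with permuted components and the small exceptional cases; that is a substantial verification you do not carry out, and it is not obviously uniform. The paper closes the case more cheaply: among the transvection-generated irreducible subgroups of $\GL_{2(n+k)}(q)$ (subfield versions of $\SL$, $\SU$, $\Sp$, and for $q$ even also $\OO^{\pm}$ and some symmetric/alternating groups), the only one that can contain the specific subgroup $\diag(\Id_{2k},\Sp_{2n}(q))$ is $\Sp_{2(n+k)}(q)$ itself, and that group is visibly not fixed point (a Singer-type element of order $q^{n+k}+1$ has no eigenvalue $1$). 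So instead of producing a fixed-point-free element in each group of the classification, one only has to test which listed groups contain one explicit reducible subgroup --- a much shorter check. Until that step (or your version of it) is actually carried out, the maximality claim is not proved.
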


\bp Let $V,V_1,V_2$ be the standard modules for $\Sp_{2(k+n)}(q)$, $\Sp_{2k}(q)$ and $\Sp_{2n}(q)$, respectively. By  ``diagonal" embedding we mean the embedding which agrees with $V_1\oplus V_2\rightarrow V$ such that $V_1,V_2$ are non-degenerate subspaces of $V$. Note that $k>1$ by \cite[Ch. I,\S 1, Exercise 1]{serre}, and in fact $k>2$ by \cite{katz} as $Sp_2(q)$ and $Sp_4(q)$ has no \ir fixed-point subgroup.

Suppose the contrary. Let $X$ be a  fixed-point subgroup of $\Sp_{2(k+n)}(q)$ containing $H$. Suppose first that $X$ is irreducible. Then $X$ contains an \ir subgroup $Y$ containing all transvections of $H$, in particular, $\diag(\Id_{2k},\Sp_{2n}(q))\subset Y$.  Irreducible subgroups of $\GL_{2(n+k)}(q)$ generated by transvections
are known: if $q$ is odd then  these are $\SL_{2(n+k)}(q')$, $\SU_{2(n+k)}(q')$, $\Sp_{2(n+k)}(q')$ with $q'|q$. If $q$ is even then, additionally, these are $\OO_{2(n+k)}(q')$, $S_{2n+2}$,  $S_{2n+1}$ or
$A_6\subset \SO^+_4(q)$, see \cite[\S 12]{z81} for detailed references. The latter case is ruled out by assumption. An easy analysis implies
that only $\Sp_{2(n+k)}(q)$ can contain $\diag(\Id_{2k},\Sp_{2n}(q))$, so $Y=\Sp_{2(n+k)}(q)$.
This group is not fixed point. So $X$ is reducible. 
Let $0\neq W\subset V$ be a  $X$-stable subspace of $V$. Then .
 $W=V_i$ for $i=1$ or 2. As $V_i$ is non-degenerate, $X$ stabilizes $V_i^\perp$.
\itf that $X$ stabilizes both $V_1,V_2$. As $\diag(\Id_{2k},\Sp_{2n}(q))\subset X$, we deduce that
the restriction $X_1$ of $X$ to $V_1$ contains $G$ and $X_1 $ must be fixed point on $V_1$.
Then $X_1=G$, and then $X=H$, a contradiction.
\enp

With these preliminary results complete, we now turn to the main business of the paper in the next several sections.

\section{The Affine Groups}

In this  section we show that affine linear groups occur as fixed-point subgroups of certain symplectic groups $\Sp_m(2)$.

An affine group is defined as a semidirect product of $A=\F_q^+$ and a subgroup $H\subset \GL_n(q)$
such that $H$ transitively permutes  the non-zero elements of $A$. Such groups $H$ have been classified by Hering \cite[\S 5]{He}; see also \cite{Dix}. Then $H$ acts 2-transitively on the cosets $AH/H$.
In particular, $AH$ is a 2-transitive subgroup of $S_n$, where $n=|A|$.

Theorem \ref{af1} is a special case of the \f result:

 \begin{thm}\label{af0} Let $AH$ be an affine group with $|A|$ odd,  and $m=(|A|-1)/2$.
  Suppose that $n>1$ or $n=1$ and q is not a prime.
 Then there exists an absolutely \ir fixed-point subgroup of $\Sp_{2m}(2)$ isomorphic to $H$.
\end{thm}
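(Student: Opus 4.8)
Write $\Gamma=AH$ and $N=|A|$, so $N$ is odd and $2m=N-1$. The plan is to realise $\Gamma$ itself on the deleted permutation module of its natural action on the $N$ points of $A$; for $H=\GL_n(q)$ this group is $\AGL_n(q)$, the group named in Theorem~\ref{af1}. (A counting argument shows that $H$ alone is far too small to carry an absolutely irreducible module of degree $2m=N-1$ — already $\GL_n(q)$ has no faithful irreducible representation of degree $q^n-1$, since its irreducible degrees are bounded well below $q^n-1$ — so the content of the statement is the realisation of the affine group $\Gamma$.) Concretely, let $M=\F_2 A$ be the permutation module with basis $\{e_a:a\in A\}$ permuted by $\Gamma$, put $j=\sum_{a\in A}e_a$, and give $M$ the $\Gamma$-invariant form $\langle e_a,e_b\rangle=\delta_{ab}$. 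As $N$ is odd, $\langle j,j\rangle=N=1$ in $\F_2$, so $j\notin j^{\perp}$ and $M=\langle j\rangle\oplus D$ as $\F_2\Gamma$-modules, with $D=j^{\perp}$ of dimension $N-1=2m$.

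First I would install the symplectic structure. For $v=\sum c_a e_a\in D=j^{\perp}$ we have $\langle v,v\rangle=\sum_a c_a=\langle v,j\rangle=0$, so the form restricts to an alternating form on $D$, and its radical $D\cap D^{\perp}=j^{\perp}\cap\langle j\rangle$ is $0$; thus the form is nondegenerate and $\Gamma$ embeds into $\Sp_{2m}(2)$. For absolute irreducibility I would use Clifford theory over $\overline{\F}_2$. Since $|A|$ is odd, $M|_A$ is the regular $A$-module $\bigoplus_{\chi}\chi$, each character of $A$ occurring once, with $\langle j\rangle$ the trivial summand; hence $D|_A=\bigoplus_{\chi\neq 1}\chi$. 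Any $\Gamma$-submodule $W\subseteq D$ is $A$-stable, hence the sum of the character spaces indexed by some subset $S\subseteq\widehat A\setminus\{1\}$, and $H$-invariance of $W$ makes $S$ an $H$-invariant set. As $H$ is transitive on $A\setminus\{0\}$, equivalently on $\widehat A\setminus\{1\}$, we get $S=\varnothing$ or $S=\widehat A\setminus\{1\}$, i.e. $W=0$ or $W=D$. The argument is insensitive to the base field, so $D$ is absolutely irreducible; having dimension $2m\geq 2$ it has no trivial composition factor.

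It remains to prove that $\Gamma$ is a fixed-point subgroup, i.e. $D^{g}\neq 0$ for all $g\in\Gamma$. Because $M=\langle j\rangle\oplus D$ and $g$ fixes $j$, we have $\dim M^{g}=1+\dim D^{g}$, while $\dim M^{g}$ equals the number of $\langle g\rangle$-orbits on $A$. Thus $D^{g}=0$ exactly when $g$ acts as a single $N$-cycle on $A$. Such a cycle has order $N=p^{f}$ with $f=\dim_{\F_p}A=ne$, so it is a $p$-element; but $H\subseteq\GL_n(q)$ acts $\F_p$-linearly, whence $\Gamma\subseteq\AGL_f(p)$, and via the embedding $\AGL_f(p)\hookrightarrow\GL_{f+1}(p)$ every $p$-element of $\AGL_f(p)$ is unipotent, of order at most $p^{\lceil\log_p(f+1)\rceil}$. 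Since $p$ is odd and $f\geq 2$ one has $p^{f-1}\geq f+1$, so this order is at most $p^{f-1}<p^{f}=N$; hence no element of $\Gamma$ is an $N$-cycle, and $D^{g}\neq 0$ for every $g$.

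The hypothesis ``$n>1$, or $n=1$ and $q$ not prime'' is exactly the condition $f=ne\geq 2$, and the excluded case $f=1$ is precisely the one in which the regular translation by a generator of $A=\F_p$ is itself an $N$-cycle (so the construction genuinely fails there). I expect this last verification — ruling out a single full $N$-cycle and recognising that the stated hypothesis is exactly what the order estimate in $\AGL_f(p)$ requires — to be the one substantive point; the symplectic form and the irreducibility are then formal. Since $D$ is already defined over $\F_2$ and absolutely irreducible there, $\Gamma\cong AH$ is realised as an absolutely irreducible fixed-point subgroup of $\Sp_{2m}(2)$, which is the assertion of the theorem (and gives $\AGL_n(q)$ in the setting of Theorem~\ref{af1}).
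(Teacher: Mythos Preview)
Your proof is correct, and you rightly observe that the group realised is the affine group $\Gamma=AH$ rather than $H$ alone (the statement as printed contains a slip; compare Theorem~\ref{af1}, where the subgroup is $\AGL_n(q)$). Your construction of the module $D$, the symplectic form, and the Clifford-theoretic proof of absolute irreducibility are the same as the paper's.

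The substantive difference is in the fixed-point argument. The paper shows that every odd-order element $h\in AH$ satisfies $|h|\le q^n-1<|A|$ by an induction on $n$: if $\langle h\rangle\cap A=1$ this is the standard bound on element orders in $\GL_n(q)$, while if $\langle h\rangle\cap A\neq 1$ one passes to the centraliser of a nontrivial element of $A$, which is an affine group of one dimension less, and invokes the inductive hypothesis (with a separate check of the base case via the exponent of a Sylow $r$-subgroup). Your argument is shorter and more uniform: an $N$-cycle would force a $p$-element of order $p^{f}$ in $\AGL_f(p)\subset\GL_{f+1}(p)$, but unipotent elements there have order at most $p^{\lceil\log_p(f+1)\rceil}\le p^{f-1}$ once $f\ge 2$ and $p$ is odd. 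This bypasses the induction and the centraliser analysis entirely, and it makes transparent why the hypothesis ``$n>1$, or $n=1$ and $q$ not prime'' is exactly $f\ge 2$. Both approaches identify the excluded case $f=1$ as the unique situation where a genuine $N$-cycle (the regular translation on $\F_p$) appears.
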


\bp Let $n=|A|$. Then $AH$ is a 2-transitive subgroup of $S_n$. Let $R$ be the natural permutational $FS_n$-module, where $F$ is an \acf of characteristic 2.  As $n$ is odd, we have $R=F\oplus M$, where $M$ is \ir and $F=1_{S_n}$ stands for the trivial $FS_n$-module. This is also true for $\F_2$ in place of $F$. If $M_2$ is the corresponding $\F_2S_n$-module then $S_n$ is well known to preserve a non-degenerate alternating form on $M_2$;
this yields an embedding $\phi:S_n\rightarrow \Sp(M_2)$. As an $FA$-module, $R$ is completely reducible.

Let $M=\sum_{\al\neq 1_A}R_\al$ be the sum of the non-trivial \ir $FA$-submodules of $R$. As  $H$ is transitive on $A\setminus 1$, it also  is transitive on the non-trivial
\ir characters of $A$. \itf $\dim R_\al=1$ for every $\al$ and $M$ is \ir $FAH$-module as well as $M_2$.

It remains to show that $\phi(AH)$ is a fixed point subgroup. For this we can assume that $H=\GL_n(q)$.
Let $h\in AH$. To prove that 1 is an \ei of $\phi(h)$, it suffices to assume that $|h|$ is odd.

In a permutational module $\C X$-module
of a  group $X$ the \mult of the trivial character $1_X$ equals the number of the $X$-orbits at the
underlying permutation set (see \cite[Theorem 32.3]{CR}. This is true for the $FX$-module
if $|X|$ is odd. Applying this to the cyclic group $\lan h\ran$
 and the permutational set $AH/H$, it suffices to  show that  $\lan h\ran$ has at least 2 orbits on $A$. For this  it suffices to  show that  $|h|<|H:G|=q^n$ for every $h\in H$.

Note that the  order of an element in $\GL_n(q)$ is at most $q^n-1$ (see for instance \cite[Corollary 2.7]{PS}).  If  $\lan h\ran\cap A=1$ then $|h|$ does not exceed the maximum of element order in $H/A\cong G\subset \GL_n(q)$, so $|h|\leq q^n-1$.

Suppose that  $\lan h\ran\cap A\neq 1$. Let  $h_0$ be a generator of $\lan h\ran\cap A$. Then $h\in C_H(h_0)$,
and hence $|h|$ does not exceed $rx$, where $x$ is the maximum of element orders in $C_G(h_0)$. Note that
$C_G(h_0)$ can be interpreted as the stabilizer in $G$ of a non-zero vector in the underlying vector space of $\GL_n(q)$. So $C_G(h_0)$ is isomorphic to the affine group $\F_q^{n-1}\GL_{n-1}(q)$. We use induction. If $n=2$ then $\F_q^{n-1}\GL_{n-1}(q)=\F_q\GL_1(q)$. If $x\in \F_q\GL_1(q)$ then
$|x|\leq q-1$ unless $x\in \F_q$ and $q=r$. Then $|x|=r$, and hence $|h|$ is an $r$-power.
Note that $\AGL_2(q)$ is isomorphic to a subgroup  $\begin{pmatrix} *&*&*\cr *&*&*\cr 0&0&1\end{pmatrix}$
over $\F_q$, and the subgroup  
$\begin{pmatrix} 1&*&*\cr 0&1&*\cr 0&0&1\end{pmatrix}$ is a Sylow $r$-subgroup of it.
This is of exponent  $r$, as $q$ is odd. \itf $|h|\leq q^2-1$. Let $n>2$.
By induction, the maximum element order in $\F_q^{n-1}\GL_{n-1}(q)$ does not exceed  $q^{n-1}-1$,
so  $|h|\leq r(q^{n-1}-1)<q^n-1$.

Let $n=1$. Then a  maximum element order in $\AGL_1(q)$
 is  $q-1$ unless $q=r$. In the latter case $|h|=r$ for $1\neq h\in A$, and 1 is not an \ei of $h$.
\enp

\begin{cor}\label{se1} Let $d>4$. Then $\Sp_{2d}(2)$ has a fixed point subgroup $G$ of order $27$ and exponent $3$ with no trivial composition factor. If $d=4$ then there is such group $G$ of order
$9$. 
\end{cor}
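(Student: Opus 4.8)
The plan is to build the required groups by hand for $d=4$ and then propagate to all $d>4$ by the ``trivial fact'' Lemma~\ref{tf1}; no new ideas beyond what is already in Section~4 are needed.

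\textbf{The case $d=4$.} First I would exhibit a fixed-point subgroup $G_1\cong C_3\times C_3$ of $\Sp_8(2)$ with no trivial composition factor. Identify $\Sp_2(2)$ with $S_3$ and let $C_3$ denote its subgroup of order $3$; acting on $\F_2^2$ its generator has characteristic polynomial $x^2+x+1$, so $C_3$ is irreducible of degree $2$ and has \emph{no} eigenvalue $1$. Decompose the symplectic space $\F_2^8=W_1\perp W_2\perp W_3\perp W_4$ into four mutually orthogonal nondegenerate planes, so $\Sp(W_1)\times\cdots\times\Sp(W_4)\le\Sp_8(2)$. The group $C_3\times C_3$ has exactly four subgroups $K_1,\dots,K_4$ of order $3$; for each $j$ let $C_3\times C_3$ act on $W_j$ through the quotient $(C_3\times C_3)/K_j\cong C_3\hookrightarrow\Sp(W_j)$. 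Since $\bigcap_j K_j=1$ the resulting map $G_1:=C_3\times C_3\to\prod_j\Sp(W_j)\le\Sp_8(2)$ is injective, $|G_1|=9$ and $\exp G_1=3$. Every nonidentity $g\in G_1$ lies in a unique $K_j$, hence fixes $W_j$ pointwise, so $1$ is an eigenvalue of $g$; thus $G_1$ is a fixed-point subgroup. Finally, since $|G_1|=9$ is prime to $2$, $\F_2[G_1]$ is semisimple, and each $W_j$ is a nontrivial irreducible $\F_2G_1$-module, so $\F_2^8=\bigoplus_j W_j$ has no trivial composition factor. (This $G_1$ is nothing but the image of the translation subgroup $A=\F_3^2$ of the affine group $\AGL_2(3)$ under the embedding $\phi\colon S_9\to\Sp_8(2)$ constructed in the proof of Theorem~\ref{af0}; there the absence of a trivial composition factor is the same observation that $R=\F_2[A]$ is semisimple because $|A|$ is odd.)

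\textbf{The case $d>4$.} Keep $V_1=\F_2^8$ with $G_1$ as above and set $V_2=\F_2^{2(d-4)}$. Since $d-4\ge1$ we may embed $G_2\cong C_3$ diagonally into $\Sp_2(2)^{d-4}\le\Sp(V_2)$, so $G_2$ acts on $V_2$ as a direct sum of copies of the $2$-dimensional irreducible module; hence $G_2$ has no trivial composition factor on $V_2$ (and is itself not fixed-point, having no eigenvalue $1$ there). Writing the symplectic space $\F_2^{2d}=V_1\perp V_2$, Lemma~\ref{tf1} gives that $G:=G_1\times G_2\cong C_3^3\le\Sp_{2d}(2)$ is a fixed-point subgroup with no trivial composition factor; it has order $27$ and exponent $3$, as required.

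\textbf{Main obstacle.} There is essentially no obstacle: the statement is a direct assembly of the $\Sp_2(2)\cong S_3$ example with Lemma~\ref{tf1}. The one point deserving attention is the ``no trivial composition factor'' clause, and this is exactly where $\gcd(|G_1|,2)=1$ — i.e. semisimplicity of $\F_2[G_1]$ by Maschke's theorem — is used to rule out a trivial summand in $V_1$; once that is in place, Lemma~\ref{tf1} carries the property to $G$ for every $d>4$.
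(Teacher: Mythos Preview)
Your proof is correct and follows essentially the same approach as the paper's own proof: for $d=4$ take the elementary abelian group $A\cong C_3\times C_3$ (the translation subgroup of $\AGL_2(3)$ inside $\Sp_8(2)$, exactly as you identify), and for $d>4$ form $A\times C_3$ with $C_3$ acting diagonally via $\Sp_2(2)^{d-4}$ on the remaining $2(d-4)$ coordinates, invoking Lemma~\ref{tf1}. Your explicit four-quotient description of the $A$-action on $\F_2^8$ is a nice concrete version of what the paper obtains by restricting the permutation module in Theorem~\ref{af0}, but the content is identical.
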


\bp If $d=4$ then we choose for $G$ the subgroup $A$ of $\AGL_2(3)$ in Theorem \ref{af0}, whose proof
 makes evident that    $A$ has no
trivial composition factor. Let $d>4$ and let $V$ be the underlying space of $\Sp_{2d}(2)$.
Let $V=V_1\oplus V_2\oplus \cdots \oplus  \oplus V_{d-4}$, where $V_1\ld V_{d-4}$ are non-degenerate mutually orthogonal subspaces such that $\dim V_1=8$ and $\dim V_2=\cdots=\dim V_{d-4} =2$.
Let $g\in \Sp_2(2)$ be an element of order 3, and $G=A\times C_3$. The \rep in question
is the sum of a \rep $A\rightarrow \Sp_8(2)$ with $|A|=9$ considered above, and the \rep $C_3 \rightarrow \Sp_{2(d-4)}(2)$, which sends $g $ to the element $\diag(g,g\ld g)$. Then the \rep obtained satisfies
the conclusion of the corollary.\enp

 \begin{rmk} Let $A$ be an elementary abelian group of finite order $m$. The proof of Theorem \ref{af0} shows that $A$ is isomorphic to a fixed point subgroup of $\Sp_{m-1}(2)$ which has no trivial composition factor. In addition, $A$ is a minimal subgroup of  $\Sp_{m-1}(2)$ with this property, that is,  every proper subgroup $B$ of $A$ has a trivial composition factor.\end{rmk}

\bl{ap1} Let G be a group such that $A\subset G\subseteq AH\subset S_n\subset \Sp(M)$. Suppose that
G is \ir on M. Then G acts transitively on $A\setminus 1$, in particular, $|G|$ is a multiple of
$|A|-1$.

If $|A|=9$ then $G/A$ is isomorphic to one of the following groups:  $\GL_2(3),\SL_2(3)$, $D_{16}$, $Q_8$ or  $C_8$, and $G\in\{\AGL_2(3),\ASL_2(3),\AGGL_1(9), \PSU_2(3),\AGL_1(9)  \}$.\el

\bp It is known that $G$ is a 2-transitive subgroup of $S_n$, see for instance \cite[Theorem 3.10]{KS}.
This implies that $G/A$ is transitive on $A\setminus 1$ when $G$ acts on $A$ by conjugation.
This implies $|G|$ to be a multiple of
$|A|-1$.

Let $|A|=9$. Then $G/A$ is a subgroup of $\GL_2(3)$ and 8 divides $|G/A|$. Clearly, $\SL_2(3)$
is transitive on $A\setminus 1$. Other subgroups of $\SL_2(3)$ whose order is a multiple of 8 are 2-groups.
The Sylow 2-subgroup $D$, say, of $\GL_2(3)$ is dihedral of order 16. This is transitive on $A\setminus 1$ as $|C_D(a)|=2$ for $1\neq a\in A$. Let $D_1$ be a subgroup of $D$. Then $|D_1|\leq 8$, we have to inspect $D_1$ of order 8. This is transitive on $A\setminus 1$ \ii $|C_{D_1}(a)|=1$, equivalently, \ii $D_1$ contains no reflection (of Jordan form $\diag(1,-1)$). Clearly, the only such group containing a reflection is the dihedral group $D_8$ of order 8. We can assume that
$$
D_8=\left\{ \begin{pmatrix}\pm 1&0\\ 0&\pm 1\end{pmatrix}, \begin{pmatrix} 0&\pm 1\\ \pm 1&0\end{pmatrix}\right\}.
$$
Moreover, the group $Y=3^2:D_8\cong S_3\wr C_2$ is not \ir in $\Sp_8(2)$.
Indeed, the character table of $Y$, available at {\small
\begin{center}
\href{https://people.maths.bris.ac.uk/~matyd/GroupNames/61/S3wrC2.html}{https://people.maths.bris.ac.uk/$\sim$matyd/GroupNames/61/S3wrC2.html},
\end{center}}
\noindent implies that the \ir 2-modular \reps of it are of degrees 1,4,4, and the Brauer character values are integers.
By Lemma \ref{tt1},  each of these \reps is realized over $\F_2$. Therefore, $Y$ has no \irr of degree 8 over $\F_2$.
The remaining subgroups of order 8 are cyclic $C_8$  and the quaternion group of order 8 (a Sylow 2-subgroup of $\SL_2(3)$).
\enp

\section{Fixed-point subgroups of $\Sp_8(2)$ }\label{applications}

In this section we focus on the smallest symplectic groups $\Sp_{2n}(p)$ for which a complete classification of its irreducible fixed-point subgroups is of yet unknown: the case $n=4$ and $p=2$.  We remind the reader that this is the motivating case from the point of view of 2-torsion on abelian fourfolds. In this section $H=\Sp_8(2)$ and $V$ the symplectic space over $\F_2$ on which $H$ acts in the natural way. So $H$ preserves a non-degenerate bilinear form $(\cdot,\cdot)$ on $V$.
If $U$ is a subspace of $V$ then $U^\perp=\{x\in V: (x,U)=0\}$.
 \medskip

We denote by $\mathcal{F}$ the set of irreducible fixed-point subgroups of $H$. In this section we determine $\mathcal{F}$, that is, we prove Theorem \ref{cmain}. 
We focus on the first statement as the second one follows from Lemma \ref{ap1} and easy analysis of subgroups of $\PGL_2(7)$.

Our strategy is to search in the tree of maximal subgroups to determine the maximal irreducible fixed-point groups.  However, due to the size of the group $\Sp_8(2)$, we first make a few observations that reduce the search size.

The elements of $H$ is not fixed point, hence any irreducible fixed-point subgroup must lie in a maximal proper irreducible subgroup. By \cite[Table 8.48]{bhrd}, the proper, \emph{irreducible} maximal subgroups $M$ of $H$ are given in Table 1, in increasing size order.

\med
\begin{center}
\begin{tabular}{|l|l|}
\hline
Maximal Subgroup & Order \\
\hline
${\rm L}_2(17)$ & $2^4.3^2.17$ \\
$\Sp_4(2) \wr S_2$ & $2^9.3^4.5^2$\\
$\Sp_4(4).2$ & $2^9.3^2.5^2.17$\\
$S_{10}$ & $2^8.3^4.5^2.7$\\
$\SO_8^+(2)$ & $2^{13}.3^5.5^2.7$\\
$\SO_8^-(2)$ & $2^{13}.3^4.5.17$\\
\hline
\end{tabular}

\smallskip
\textsc{Table 1:} Maximal Irreducible Subgroups of $\Sp_8(2)$
\end{center}

\med
Until the end of this section, $G\in {\mathcal F}$ and
 $M$ denotes a maximal subgroup of $H$ containing $G$.

\bl{55a}
$G$ has no element of order $5, 17$, or $21$.
\el

\bp  The elements of order 17 and 21 of $H$ do not have \ei 1. For the remaining claim, suppose the contrary and let $h\in G$ be of order 5. Observe that $G$ is primitive on $V$. (Otherwise $V=V_1+V_2$, where $V_1,V_2$ are disjoint subspaces permuted by $G$, and  $gV_1=V_2$, $gV_2=V_1$ for some $g\in G$. Clearly,  $hV_i=V_i$ and $h$ is either trivial on $V_i$ or irreducible for $i=1,2$. As  $V^h\neq 0$,  we may assume that  $V^h=V_2$. But then $hghg\up$ acts fixed point freely on $V$, a contradiction.)

 As $G$ is primitive, we  can ignore $M= \Sp_4(2) \wr S_2$.

Let $M=\Sp_4(4).2$. Then $h$ lies in a subgroup isomorphic to $\SL_2(16)$ or
  $(A_5\times A_5):2$ \cite[p. 44]{Atlas}. The latter case is ruled out as above, the former case is ruled out by \cite{JLPW}.

Let $M= S_{10}$. Let $L$ be a maximal subgroup of $M$.
Then $L$ has an element of order 5 \cite[p.49]{Atlas}, but we can ignore
$L$  with $O_2(L)\neq 1$.  Groups   $L\cong S_6\times S_4$ and $S_7\times S_3$ are reducible (by Clifford's theorem, say), and   $L\cong (S_5\times S_5)\cdot 2$ is imprimitive.  We are
left with  $L\cong S_9$ and $L\cong M_{10}.2$. As  $G\subset S_{10}$, the Brauer character $\phi$ of
$G$ has integral values, so $\phi|_{A_6}=\phi_2+\phi_3$ in the notation of \cite{JLPW}, and then $\phi(h)$ is fixed point free by \cite[p. 4]{JLPW}. So $G\subseteq S_9\subset S_{10}$.  \itf $\phi(h)=3$ and then $\phi(x)=-1$ for $x\in S_9$ of order 9, and hence $x$ is fixed point free. So $G\neq S_9$. In addition, $G\neq A_8,S_8$ as otherwise $\phi(g)=-2$ \cite[p. 49]{JLPW}. If $G\subset S_8, G\neq A_8$ and $(5,|G|)=1$ then $G$ is reducible. This is a contradiction.
\enp

\bl{a34} Let   $A$ be a maximal abelian normal subgroup of $G$. If $A\neq 1$ then $A=O_3(G)$, $A=C_3\times C_3$ and $G$ is isomorphic to a subgroup of $\AGL_2(3)$. In particular, this holds if $G$ is solvable.\el

\bp By Lemma \ref{55a}, $(|A|, 85)=1$. Let $B$ be
a Sylow 7-subgroup of $A$. Then $B=\langle b \rangle$ is cyclic and normal in $G$. Then $V^B\neq0$ is $G$-stable, a contradiction.

So $|A|$ is a $3$-power. As above, one observes that $A$ has no element of order 9. So $A$ is elementary abelian, and $|A|\leq 27$ to be fixed point. Suppose that $|A|=27$. In addition,
$(|G|,7)=1$.  (Indeed, if $g\in G$ is of order 7 then $[g,A]\neq 1 $ (otherwise $G$ has an element of order 21, contrary to Lemma \ref{55a}), and $g$ cannot normalizes $A$ as $(|\GL_3(3)|,7)=1$. \itf that
$2,3$ are the only prime divisors of  $|G|$, in particular, $G$ is solvable.
Note that $V^A\neq 0$ is $G$-stable so $V^A=0$; in particular,   $A$ is not cyclic. If $A\neq O_3(G)$
then $O_3(G)$ is non-abelian, and hence there is an \ir constituent of $O_3(G)$ on $V$ whose dimension is a multiple of 3. This is false by Clifford's theorem.

Let $Y$ be a minimal \ir normal subgroup of $G $ such that $A<Y$. Then $Y$ has a reducible subgroup $Y_0$, say, of index 2. (Otherwise,  $G$ has a normal subgroup $G_3$ of index 3 with   $A<G_3$. By Clifford's theorem,   $G_3$ is irreducible, violating the minimality of $Y$.)
By Clifford's theorem, either $V$ is a homogeneous $\F_2Y$-module or $V=V_1+V_2$,  where $V_1,V_2$ are $Y_0$-stable subspaces of $V$. Then $V_1,V_2$ are \ir and non-isomorphic $\F_2Y_0$-modules (this essentially follows from \cite[Theorem 51.7]{CR}). So $V_1,V_2$ are permuted  by $y\notin Y\setminus Y_0$.

 Observe first that (*) if $1\neq a\in A$ then $a$ is non-trivial on $V_2$. (Indeed, suppose that  $V_2\subseteq V^a$. If $a$ fixes no non-zero vector on $V_1$ then $a hah\up$ is fixed point free on $V$, a contradiction. So $V_2\neq V^a$.
 Then $\dim V^a=6$. Note that  $A$ is generated by 3 conjugates of $a$. (If not there is a proper normal subgroup $A_0$ of Y containing $a$. Then $A_0$ is generated by 2 conjugates of $a$. Then $V^{A_0}\neq 0$ is $Y$-stable.)  So let $A=<a,b,c>$, where $a,b,c$ are conjugates of $a$. Then $V^A=V^a\cap V^b\cap V^c\neq 0$ is $Y$-stable, a contradiction.)

Let $a,b,c$ be generators of $A$, and $a_2,b_2,c_2$ the restriction of them to $V_2$. By the above, each of them is non-trivial on $V_2$. Clearly,
the restriction of $A$ to $V_2$ is of rank 2 (due to (*) this is not of rank 1), so we can assume $c_2= a_2^ib_2^j$ for some $i,j$. Then $c\up a^i,b^j\neq 1$ is trivial on $V_2$,  contradicting to (*).

Thus $A=C_3\times C_3$ and $C_G(A)=A$. So $G/A$
is isomorphic to a subgroup $X$ of $\GL_2(3)$ such that $O_3(X)=1$. Then $X$ is either a 2-group or
$\SL_2(3)$ or $\GL_2(3)$. In addition,  $G=A:D$, where $D\cong X$. This is obvious if $X$ is a 2-group, otherwise let $z$ be the central involution in $X$. Then there is an involution $t$, say, of $G$
such that $z=tA$. Then $[t,G]\subset A$ and $tat=a^{-1}$ for every $a\in A$. Observe that
the coset $Ag$ ($g\in G\setminus A)$ meets $C_G(t)$. Indeed, if $tgt=ag$ with $1\neq a\in A$
then $ta^{-1}gt=a^2g=a^{-1}g$. \itf $C_G(t)\cong G/A$, so we can take $D=C_G(t)$. As $\AGL_2(3)\cong A:\GL_2(3)$, the result follows.\enp

\bl{so1} 
If $(7,|G|)=1$ then $G\in \{\AGL_2(3),\ASL_2(3),\AGGL_1(9)$, $\AGL_1(9)$,  $\PSL_3(2)\}$.
\el

\bp Note that $|H|=2^{16}.3^5.5^2.7.17$. By Lemma \ref{55a}, the primes 5 and 17 are not divisors of $|G|$, and 7 is not by assumption. By Burnside's theorem,  $G$ is solvable as $|G|$ has at most 2 prime divisors. Then the result follows from Lemma \ref{a34}.\enp

These observations allow us to present a streamlined proof of Theorem \ref{cmain}.
For any computations performed in \textsf{Magma}, we refer the reader to the Appendix for the commands.

\begin{proof}[Proof of Theorem {\rm \ref{cmain}}] 
As mentioned above $G \in \mathcal{F}$ lies in some maximal irreducible subgroup $M$ of Table 1.  By Lemma \ref{55a}, $G$ has no element of order 5 or 17. In addition, $Z(G)=1=O_2(G)$ by Lemma \ref{au1}.  For the remainder of this proof, $L$ denotes a maximal subgroup of $M$. \\

\noindent \textbf{{Case (1)}} Suppose $M\in \{{\rm L}_2(17),\Sp_4(2)\wr S_2,\Sp_4(4).2\}$. Then $(7,|M|)=1$ and the result follows from Lemma \ref{so1}. \\

\noindent \textbf{Case (2)} If $M=\SO_8^-(2)$ then $L\in \{{\rm L}_3(2):2, \, {\rm L}_2(16):4,  \Omega_8^-(2)\}.$
\begin{itemize}
\item The group ${\rm L}_3(2):2$ is  irreducible in $M$ (otherwise it is not maximal), and then it is a    fixed-point group, which is   listed in the statement of the theorem.
\item The group ${\rm L}_2(16):4$ has order divisible by 17, and all subgroups
of order coprime to 17 are reducible. 
\item The group $\Omega_8^-(2)$ has index 2 in $\SO_8^-(2)$ and by \cite[p.~89]{Atlas}, every maximal subgroup of $\Omega_8^-(2)$ has index 2 in some other maximal subgroup of $\SO_8^-(2)$.  Therefore, there are no novel subgroups left to analyze that have not been covered already. \\
\end{itemize}

\noindent \textbf{Case (3)} Let $M=S_{10}$ so that  $L\in\{M_{10}.2,S_5 \wr S_2,S_9,A_{10}\}$.
By Lemma \ref{55a}, $M,L\notin {\mathcal F}$. 

\begin{itemize}
\item  If $L=M_{10}.2$ or $S_5 \wr S_2$, then $(7,|L|)=1$ and the result follows as above by Lemma  \ref{so1}.

\item  If $L=S_9$, then 
${\rm AGL}_2(3) $ and $
 A_{9}$ are the only maximal irreducible subgroups of $L$ \cite{Atlas}. The former is fixed-point.
 The latter  is not fixed-point, so we descent to the maximal irreducible subgroups of it.  By \cite{Atlas}, these are $
{\rm ASL}_2(3), $ and two conjugacy classes of subgroups isomorphic to 
${\rm L}_2(8).3$ (these are conjugate in $S_9$). The former is not maximal fixed-point subgroup
as ${\rm ASL}_2(3)\subset {\rm AGL}_2(3) . $

 Each of the two groups isomorphic to ${\rm L}_2(8).3$ has a fixed point free element of order 9, so these groups are  not fixed-point, and a search in {\sf Magma} for irreducible fixed-point subgroups reveals that there are none.

\item 
The maximal irreducible subgroups  of $A_{10}$ are
\begin{center}
 $A_9$, $M_{10}$ and $(A_5 \times A_5):4$.
\end{center}
The group $A_9$ has been covered by the analysis of maximal subgroups of $S_9$, and the other two groups have no element of order 7 and so are ruled out as above. \\
\end{itemize}

\noindent \textbf{Case (4)}  Let $M=\SO_8^+(2)$ so that  $L\in\{
S_5 \wr S_2, \, S_3 \wr S_4,\, S_9, \, \ \Omega_8^+(2)\}$.  As above, we only have to examine $L$ with $(7,|L|=7)$, which are $S_9$ and $ \Omega_8^+(2)$.

The group $H$ has a unique subgroup (up to conjugacy) isomorphic to $S_9$ \cite{JLPW}. As this group has  already appeared  above in our analysis of the case of $S_{10}$, we can safely ignore it.

To finish the proof, we turn to maximal subgroups $L_1$ of $L=\Omega_8^+(2)$. By Lemma \ref{so1}, we can ignore the subgroups having a non-trivial normal subgroup. This leaves us with the following groups  (up to isomorphism):
$$
L_1 \in \lbrace S_6(2), \, A_9, \, (A_5 \times A_5):2^2, \, (3 \times {\rm U}_4(2)):2 \rbrace.
$$
We may  ignore the group $(A_5 \times A_5):2^2$  since it has no element of order 7 (see Lemma \ref{so1}).  One observes from \cite[p.~85]{Atlas} that  there are three conjugacy classes of groups  isomorphic to  each of the remaining groups. Moreover, exactly one of them has index 2 in a maximal subgroup of $\SO_8^+(2)$, so that one is covered by the above analysis. Furthermore, in each case two remaining classes fuse in  $\SO_8^+(2)$, so it suffices to examine any one of these classes.  We now again proceed case-by-case.

\begin{itemize}
\item Let $L_1\cong \Sp_6(2)$.
  If $g\in L_1$ is of order  9 and $\phi$   is a  2-modular \irr of $L_1$ of degree 8 then  $\phi(g)$  does not have \ei 1 \cite[p. 110]{JLPW}. So $G$ has no element of order 9. All maximal subgroups of $L_1$ with this property are reducible in $H$.

 \item
  Let $L_1\cong A_9$. By the above, we only have to consider 
  the 2-modular \irr of $A_9$ denoted by $\phi_3$   in \cite[p. 85]{JLPW}.
  We use \textsf{Magma} to determine the maximal subgroups of $K$ of $A_9$ subject to condition $(5,|K|)=1$.  This shows that $K$ is conjugate either to $\PSU_3(2)$ or to $\ASL_3(2)$. These are not maximal fixed point subgroups of $\Sp_8(2)$.
  \end{itemize}

This complete our analysis and the proof of Theorem \ref{cmain}. 
\end{proof}

\section{Sufficient Conditions of Unisingularity}

\subsection{Symplectic and orthogonal groups}

\begin{lem} \label{spo}
Let   ${\mathbf G}$ be the algebraic group of type $C_n, n>1$ or $D_n, n>3$.
 Let $G=C_n(q)$ or $D_n^\pm(q)$ and let $g\in G$ a semisimple element. Suppose that  $|g|$ does not divide $q-1$ or $q+1$. Then g is conjugate in ${\mathbf G}$ to an element $t\in {\mathbf G}$  such that
 $\ep_1(t)^q\cdot \ep_2(t)=1$.
\end{lem}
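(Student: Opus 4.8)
The plan is to recast the statement as a claim about the Weyl group acting on a split maximal torus, and then to use that $g$, lying in a finite group of Lie type, has characteristic polynomial over $\FF_q$. To set up coordinates, realize $\mathbf G$ acting on its natural $2n$-dimensional module $W_0$ (the symplectic space in type $C_n$, the orthogonal space in type $D_n$), pick a split maximal torus $\mathbf T$ with the standard basis $\varepsilon_1\ld\varepsilon_n$ of $X^*(\mathbf T)$, so that the $\mathbf T$-weights on $W_0$ are $\pm\varepsilon_1\ld\pm\varepsilon_n$ and, writing $t_i:=\varepsilon_i(t)$, the eigenvalues of $t\in\mathbf T$ on $W_0$ are $t_1^{\pm1}\ld t_n^{\pm1}$. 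The Weyl group $W$ acts on $(t_1\ld t_n)$ by signed permutations — all of them in type $C_n$, the even ones in type $D_n$ — two elements of $\mathbf T$ are $\mathbf G$-conjugate iff they lie in one $W$-orbit, and every semisimple element of $\mathbf G$ is $\mathbf G$-conjugate into $\mathbf T$. So, after conjugating $g$ into $\mathbf T$, the task becomes: arrange by the $W$-action two coordinates in positions $1$ and $2$ with $t_1^q t_2=1$.

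Next I would extract the arithmetic. Since $g\in G\subseteq\mathbf G^F$, its characteristic polynomial on $W_0$ lies in $\FF_q[X]$, so the eigenvalue multiset $E$ of $g$ on $W_0$ is stable under $\mu\mapsto\mu^q$, and (as $g$ preserves the form) under $\mu\mapsto\mu^{-1}$; in particular $\mu^{-q}\in E$ for every $\mu\in E$. The crux is to exhibit $\mu\in E$ with $\mu^{q-1}\ne1$ and $\mu^{q+1}\ne1$: for such $\mu$ one has $\mu^{-q}\notin\{\mu,\mu^{-1}\}$, so $\mu$ and $\mu^{-q}$ are eigenvalues of $g$ lying in two \emph{distinct} inverse-pairs $\{t_i,t_i^{-1}\}$. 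Then I can use $W$ to put a copy of $\mu$ in position $1$ and a copy of $\mu^{-q}$ in position $2$: immediate for type $C_n$ ($n>1$), and for type $D_n$ ($n>3$) one first makes this move by a signed permutation and, if its determinant is $-1$, corrects the parity by an extra sign change on one of the remaining coordinates $3\ld n$, without disturbing positions $1,2$. The resulting $t$ then satisfies $\varepsilon_1(t)^q\varepsilon_2(t)=\mu^q\cdot\mu^{-q}=1$, which is the assertion.

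The step I expect to be the real obstacle is producing that $\mu$. One has to be a little careful here, because $|g|$ is the lcm of the eigenvalue orders, so the bare inequalities $|g|\nmid q-1$, $|g|\nmid q+1$ allow the eigenvalues to split — some of order dividing $q-1$, the rest of order dividing $q+1$ — in which case no single $\mu$ with $\mu^{q-1}\ne1\ne\mu^{q+1}$ need exist. The clean way to handle this is to note that the conclusion of the lemma is only needed when such a split does \emph{not} occur: in the application (condition (2) of Theorem~\ref{s21}) an eigenvalue $\alpha$ with $\alpha^{q-1}=1$, moved by $W$ into position $1$, already makes the weight $m_2(q-1)\omega_1$ of $V$ vanish on $t$ — giving eigenvalue $1$ without this lemma — and an eigenvalue with $\mu^{q+1}=1$ does the same via $m_1(q+1)\omega_1$; hence $\varepsilon_1(t)^q\varepsilon_2(t)=1$ is required precisely when every eigenvalue of $g$ has order dividing neither $q-1$ nor $q+1$, and then a suitable $\mu$ is automatic (equivalently, one may simply read the hypothesis of the lemma in this sharper form). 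Everything else in the proof is routine bookkeeping with the signed-permutation action of $W$.
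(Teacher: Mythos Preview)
The paper does not actually prove this lemma; it declares it well known and defers to \cite[\S 3.2]{z16} and \cite[\S 2.4]{HZ09}. Your sketch is the standard argument those references unwind: conjugate $g$ into the diagonal torus, use that the eigenvalue multiset on the natural module is stable under $\mu\mapsto\mu^q$ and $\mu\mapsto\mu^{-1}$, pick an eigenvalue $\mu$ with $\mu^{q^2-1}\neq 1$ so that $\mu^{-q}\notin\{\mu,\mu^{-1}\}$, and then move the pair $(\mu,\mu^{-q})$ into coordinates $(1,2)$ by a signed permutation, correcting parity in type $D_n$ via a spare coordinate (available since $n>3$). All of that is correct.

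Your caution about the hypothesis is not only justified but necessary: the lemma as literally stated is false. In $\Sp_4(4)$ take $g=\diag(g_1,g_2)$ with $g_i\in\SL_2(4)$ of orders $3$ and $5$; then $|g|=15$ divides neither $q-1=3$ nor $q+1=5$, yet every eigenvalue satisfies $\mu^{q-1}=1$ or $\mu^{q+1}=1$, and one checks the eight $W$-conjugates directly to see that none satisfies $\varepsilon_1(t)^4\varepsilon_2(t)=1$. So the correct hypothesis is the sharper one you isolate: some eigenvalue of $g$ has order dividing neither $q-1$ nor $q+1$. Your remedy for the application is exactly right as well: in the proof of Proposition~\ref{aa1}, the residual ``split'' case (each eigenvalue of order dividing $q-1$ or $q+1$, but $|g|$ dividing neither) is already handled by moving such an eigenvalue into position $1$ via a pure permutation in $W$ and evaluating $m_2(q-1)\omega_1$ or $m_1(q+1)\omega_1$ there. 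Thus the package Lemma~\ref{spo} $+$ Proposition~\ref{aa1} is sound once one reads the lemma with the strengthened hypothesis, and you have identified precisely where the patch goes.
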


\begin{proof}
This is well known, and explained in detail in many sources, in particular see \cite[\S 3.2]{z16} or \cite[\S 2.4]{HZ09}.
\end{proof}

\begin{prop}\label{aa1}
Let $\mathbf{G}$ be a simple algebraic group of type $C_n$, $n>1$, $(n,q)\neq (2,2)$ or $D_n$, $n>3$. Let
$G=\Sp_{2n}(q)\subset \Sp_{2n}(F)=\mathbf{G}$ or $D^+_{2n}(q),D^-_{2n}(q)\subset D_{2n}(F)=\mathbf{G}$.
Let V be an \ir $\mathbf{G}$-module with \hw $\om$. Suppose that $ m_1(q+1)\om_1, m_2(q-1)\om_1$ and $m_3((q-1)\om_1+\om_2)$ are weights of V  for some natural numbers  $m_1,m_2,m_3$. Then  every semisimple element $g\in G$ has \ei $1$ on V.
\end{prop}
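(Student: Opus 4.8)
The plan is to reduce the claim to a combinatorial statement about weights and then feed it into the structural description of semisimple elements provided by Lemma \ref{spo}. Let $g \in G$ be semisimple. I would first dispose of the easy case where $|g|$ divides $q-1$ or $q+1$: then $g$ is conjugate in $\mathbf{G}$ to an element of a maximal torus on which the character $(q\pm1)\om_1$ (or a suitable multiple $m_i(q\pm1)\om_1$) is trivial, because $(q\pm 1)\om_1$ evaluated on $g$ is an $|g|$-th power of a root of unity whose order already divides $q\pm 1$; hence the weight space for $m_1(q+1)\om_1$ (resp.\ $m_2(q-1)\om_1$) carries the trivial character of $\langle g\rangle$, giving eigenvalue $1$ for $g$ on $V$. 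The real content is when $|g|$ divides neither $q-1$ nor $q+1$, and there Lemma \ref{spo} is exactly the tool: it produces a conjugate $t$ of $g$ with $\ep_1(t)^q \cdot \ep_2(t) = 1$, i.e.\ $\ep_2(t) = \ep_1(t)^{-q}$.

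Next I would translate ``$\mu$ is a weight of $V$'' into ``the weight space $V_\mu$ is nonzero'' and observe that $t$ acts on $V_\mu$ by the scalar $\mu(t)$; so it suffices to exhibit a weight $\mu$ of $V$ with $\mu(t) = 1$. Writing weights in the $\ep_i$-coordinates (the standard coordinates for types $C_n$ and $D_n$), one has $\om_1 = \ep_1$ and $\om_2 = \ep_1 + \ep_2$. Using the Weyl group action on weights — which for $C_n$ and $D_n$ includes sign changes and permutations of the $\ep_i$, so $V_\mu \neq 0$ implies $V_{w\mu}\neq 0$ for all $w$ in the Weyl group — I can replace each of the given weights $m_3((q-1)\ep_1 + \ep_1 + \ep_2) = m_3(q\ep_1 + \ep_2)$ and its relatives by any Weyl-conjugate. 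The point is to assemble, from the weights $m_1(q+1)\ep_1$, $m_2(q-1)\ep_1$, and the Weyl orbit of $m_3(q\ep_1 + \ep_2)$, a single weight $\mu$ such that $\mu(t) = \ep_1(t)^a \ep_2(t)^b$ with $a + qb \equiv 0$ in the relevant sense; substituting $\ep_2(t) = \ep_1(t)^{-q}$ forces $\mu(t)$ to be a power of $\ep_1(t)$ with exponent that one can arrange — by choosing the integer combination appropriately and possibly passing to a suitable multiple, keeping in mind $\ep_1(t)$ has order dividing $|g|$ which divides $|\mathbf{G}^F|$ — to be $1$. Concretely I expect to use the weight $m_3(q\ep_1+\ep_2)$ (or a sign-flipped version $m_3(q\ep_1 - \ep_2)$ or $m_3(-q\ep_1+\ep_2)$), whose value at $t$ is $\ep_1(t)^{m_3 q}\ep_2(t)^{m_3} = \ep_1(t)^{m_3 q}\ep_1(t)^{-m_3 q} = 1$ directly from $\ep_1(t)^q\ep_2(t)=1$. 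This is the crux: the relation $\ep_1(t)^q = \ep_2(t)^{-1}$ is precisely matched by the shape of the third hypothesis weight $m_3((q-1)\om_1 + \om_2)$.

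With that in hand the proof structure is: (i) reduce to finding a weight $\mu$ with $\mu(t)=1$; (ii) if $|g|\mid q-1$ use $m_2(q-1)\om_1$; if $|g|\mid q+1$ use $m_1(q+1)\om_1$; (iii) otherwise invoke Lemma \ref{spo} to get $\ep_1(t)^q\ep_2(t)=1$ and use the Weyl-orbit of $m_3((q-1)\om_1+\om_2)$, whose evaluation at $t$ collapses to $1$ by that relation. I would also need to check the orthogonal case $D_n$ carefully, since $D_n^-(q)$ involves a twisted torus structure and the ``$\ep_1^q\ep_2=1$'' normal form must be the one valid there — but Lemma \ref{spo} is stated uniformly for $C_n(q)$ and $D_n^\pm(q)$, so this is covered. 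The main obstacle I anticipate is purely bookkeeping: making sure the coordinate conventions ($\om_i$ versus $\ep_i$, the precise meaning of $\ep_1(t), \ep_2(t)$ as characters of the ambient torus, and the fact that the semisimple $g$ actually lands in a torus where these coordinates make sense after conjugation in $\mathbf{G}$) line up so that ``$\ep_1(t)^q\ep_2(t)=1$'' combines cleanly with ``$m_3(q\ep_1+\ep_2)$ is a weight'' to yield a genuine fixed vector; the algebra itself is a one-line cancellation once the setup is correct.
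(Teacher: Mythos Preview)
Your proposal is correct and follows essentially the same route as the paper: split into the cases $|g|\mid q-1$, $|g|\mid q+1$, and neither, use the weights $m_2(q-1)\om_1$, $m_1(q+1)\om_1$ in the first two cases, and in the third invoke Lemma~\ref{spo} together with the identity $(q-1)\om_1+\om_2=q\ep_1+\ep_2$ so that $\bigl(m_3(q\ep_1+\ep_2)\bigr)(t)=(\ep_1(t)^q\ep_2(t))^{m_3}=1$. The Weyl-group discussion you include is harmless but unnecessary---the given weights themselves already do the job without passing to their orbits.
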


\begin{proof} Let $t$ be as in Lemma \ref{spo}.  Suppose that $|g|$ divides $q-1$ or $q+1$.
In the latter case we have $(m_1(q+1)\om_1)(t)=(m_1(q+1)\ep_1)(t)=(\ep_1(t))^{m_1(q+1)}=1$.
 So the result follows. The case of $q-1$  a multiple of $|g|$ is similar.

Suppose that $|g|$  divides neither $q-1$ nor $q+1$. By Lemma \ref{spo},  we can assume that $\ep_1(g)^q= \ep_2(g)=1$. As $m_3((q-1)\om_1+\om_2)=m_3(q\ep_1+\ep_2)$,
we have   $(m_3(q\ep_1+\ep_2))(g)=(\ep_1(g))^q\cdot \ep_2(g))^{m_3}=1$.\end{proof}

 \begin{lem} \label{rr1}
Let $\Lambda$ be the weight system of type $C_n$, $n>1,$ $(n,q)\neq (2,2)$, or $D_n,n>3$   and let $\si_q=(q-1)(\om_1+...+\om_n)$, $q$ even. Suppose that $\si_q$ is not radical. Then $\si_q\succ (q+1)\om_1\succ(q-1)\om_1+\om_2\succ (q-1)\om_1$.\end{lem}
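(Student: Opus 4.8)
The plan is to reduce the chain to its three individual strict comparisons,
$$
\si_q\succ(q+1)\om_1,\qquad (q+1)\om_1\succ(q-1)\om_1+\om_2,\qquad (q-1)\om_1+\om_2\succ(q-1)\om_1,
$$
and to verify each by exhibiting the difference of the two weights as a nonzero element of $\mathcal{R}^+$ (recall $\lam\succ\mu$ means $\lam-\mu\in\mathcal{R}^+\setminus\{0\}$). Two of the three are immediate and do not use the hypothesis. For the rightmost comparison the difference is $\om_2$, which is dominant and radical: in the standard coordinates of \cite[Planches~III, IV]{Bo} one has $\om_2=\ep_1+\ep_2$, with even coordinate sum, so $\om_2$ lies in the root lattice, which in types $C_n$ and $D_n$ is $\mathcal{R}=\{\sum a_i\ep_i : \sum a_i \textup{ is even}\}$. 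Hence Lemma \ref{mm2} gives $(q-1)\om_1+\om_2\succeq(q-1)\om_1$, and this is strict since $\om_2\neq0$. For the middle comparison the difference is $2\om_1-\om_2$, which equals the simple root $\al_1$ in both types (equivalently, the first row of the Cartan matrix is $(2,-1,0,\dots,0)$; see \cite[Planches~III, IV]{Bo}); since $\al_1\in\mathcal{R}^+\setminus\{0\}$, this gives $(q+1)\om_1\succ(q-1)\om_1+\om_2$.

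The substantive comparison is $\si_q\succ(q+1)\om_1$, and this is where the hypothesis that $\si_q$ is not radical enters. I would argue in the $\ep$-coordinate realization of $\Lambda$. Using $\om_i=\ep_1+\cdots+\ep_i$ (for every $i$ in type $C_n$, and for $i\leq n-2$ in type $D_n$, together with $\om_{n-1}+\om_n=\ep_1+\cdots+\ep_{n-1}$), a short computation gives
$$
\si_q-(q+1)\om_1=\sum_{j=1}^{n}\lam_j\ep_j,
$$
where, for type $C_n$, $\lam_1=q(n-1)-(n+1)$ and $\lam_j=(q-1)(n-j+1)$ for $2\leq j\leq n$, and, for type $D_n$, $\lam_1=q(n-2)-n$, $\lam_j=(q-1)(n-j)$ for $2\leq j\leq n-1$, and $\lam_n=0$. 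Expanding $\sum_j\lam_j\ep_j$ in simple roots, the coefficient of $\al_i$ is the partial sum $c_i=\lam_1+\cdots+\lam_i$ at a non-branch node and is $\frac12\sum_j\lam_j$ at the distinguished node ($\al_n$ for $C_n$; the two fork nodes for $D_n$, whose coefficients agree here because $\lam_n=0$). Now $\lam_j\geq0$ for $j\geq2$, and also $\lam_1\geq0$: for $C_n$ this reads $q(n-1)-(n+1)\geq0$, which holds for $n\geq3$, and for $n=2$ follows from $(n,q)\neq(2,2)$, i.e.\ $q\geq4$; for $D_n$ it reads $q(n-2)-n\geq0$, which holds for $n\geq4$. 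Hence all $\lam_j\geq0$, so every $c_i\geq0$ and $\frac12\sum_j\lam_j\geq0$. It remains to see that $\frac12\sum_j\lam_j\in\Z$, i.e.\ that $\sum_j\lam_j$ is even, i.e.\ that $\si_q-(q+1)\om_1\in\mathcal{R}$; this holds because $(q+1)\om_1=(q+1)\ep_1$ has odd coordinate sum (so is not radical), while $\si_q$ is not radical by hypothesis, so the two weights lie in the same nonzero coset of $\mathcal{R}$. Therefore $\si_q-(q+1)\om_1$ is a nonnegative integral combination of simple roots, and it is nonzero (for instance $\lam_2=q-1>0$), which gives $\si_q\succ(q+1)\om_1$ and finishes the proof.

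The main obstacle I anticipate is not conceptual but the careful bookkeeping: carrying out the $\ep$-coordinate expansions correctly for both $C_n$ and $D_n$, in particular handling the long root of $C_n$ and the fork of $D_n$, and verifying that the integrality condition $\sum_j\lam_j\equiv 0\pmod{2}$ is genuinely equivalent to $\si_q$ not being radical, so that the residue of $n$ modulo $4$ is exactly what decides when the hypothesis, and hence the conclusion, is in force. Everything else reduces to the monotonicity of the partial sums $c_i$ and the sign of the single coefficient $\lam_1$, so I expect no real difficulty there.
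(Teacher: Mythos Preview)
Your argument is correct and is a genuinely different route from the paper's. The paper proves $\si_q\succ(q+1)\om_1$ by chaining through intermediate dominant weights and invoking Lemma~\ref{mm2} repeatedly, with a case split between $q=2$ and $q>2$ (and further subcases $n=5,6,\geq 7$ when $q=2$, plus a separate argument for $C_2$). You instead compute $\si_q-(q+1)\om_1$ directly in $\ep$-coordinates and read off the simple-root coefficients: this is uniform in $q$ and $n$ and avoids all case analysis, at the price of the coordinate bookkeeping you flagged. The two easy links $(q+1)\om_1\succ(q-1)\om_1+\om_2$ and $(q-1)\om_1+\om_2\succ(q-1)\om_1$ are handled identically in both proofs.

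Two small points of precision. First, your phrase ``the two weights lie in the same nonzero coset of $\mathcal{R}$'' is not literally justified for $D_n$, since $|\Lambda/\mathcal{R}|=4$ there; what makes your argument work is that both $\si_q$ and $(q+1)\om_1$ have \emph{integer} $\ep$-coordinates (which you did compute), and within $\Z^n$ the coset modulo $\mathcal{R}$ is determined by the parity of the coordinate sum. You should say this explicitly. Second, the parenthetical ``for instance $\lam_2=q-1>0$'' is a slip: in fact $\lam_2=(q-1)(n-1)$ for $C_n$ and $(q-1)(n-2)$ for $D_n$; either way $\lam_2>0$, so the conclusion that the difference is nonzero stands.
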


\begin{proof}
As $\si_q$ is not radical, we have $\si_q\succ \om_1$ for both the groups. (For $C_n$ see \cite[Ch. VIII, \S 7.3]{Bo8}. Let $G$ be of type $D_n$. One observes that either weight 0 or $\om_1$ is a subdominant weight of each of the weights $\om_1\ld,\om_{n-2}, \om_{n-1}+\om_n$. So we conclude similarly.) In addition,
if $\Lambda$ is of type $D_n$ then $n>4$ as  $\si_q$ is not radical. So $\om_1\prec\om_3$ \cite{Bo}. If
$\Lambda$ is of type $C_3$ or $C_4$ then $\si_q$ is radical, so we have $n>4$ again.

 As $q$ is even, $\si_q-(q+1)\om_1\in {\mathcal R}$.  For $q>2$ we have $\si\succ \si_1:=(q+1)\om_1+(q-1)\om_2+(q-3)\om_3+(q-1)(\om_4+...+\om_n)$ as $\om_3\succ\om_1$. In addition,
 $\si_1-(q+1)\om_1\in {\mathcal R}$ is dominant. By Lemma \ref{mm2},
 $\si_1\succ  (q+1)\om_1$.     Whence the first inequality.
And $(q+1)\om_1-\al_1=(q-1)\om_1+\om_2\succ (q-1)\om_1$, as $\om_2$ is radical.

Let $q=2,n>2$. Then $n\geq 5$.  If $n=5$ and 6 then $\si_2\succ 3\om_1\succ \om_1+\om_2$ for both the groups (in the $D_n$-case use $\om_{n-1}+\om_n\succ \om_1$).
If $n\geq 7$ then $\si_2-\om_1-\om_3-\om_5 $ is a radical dominant weight, and $\om_1-\om_3-\om_5\succ 3\om_1$. This implies $\si_2\succ 3\om_1$ by Lemma \ref{mm2}.

Let $n=2,q>2$. Then $\si_q-\al_2=(q+1)\om_1+(q-3)\om_2\succ (q+1)\om_1$ as $\om_2$ is radical, and $(q+1)\om_1\succ (q-1)\om_1+\om_2$.\end{proof}

 \begin{lem}\label{spo1}
 Theorem {\rm \ref{st1}} is true  for $G=C_n(q), n>1$ and $D^\pm_n(q), n>3$. \end{lem}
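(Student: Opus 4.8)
The plan is to identify the module affording $\St_2$ with (the restriction to $G$ of) the irreducible $\mathbf G$-module $V := \Phi_q$ of highest weight $\si_q = (q-1)(\om_1+\cdots+\om_n)$. By Steinberg's tensor-product theorem this restriction is indeed $\St_2$, and $\dim V = |G|_2 = m$; granting this, the lemma reduces to showing that $V$ is a unisingular $G$-module. First I would pass to semisimple elements: in characteristic $2$ every $g\in G$ has a Jordan decomposition $g = g_2 g_{2'}$ with $g_2$ a unipotent $2$-element commuting with the odd-order (hence semisimple) element $g_{2'}$, and on each eigenspace of $g_{2'}$ on $V$ the element $g$ acts as a scalar times a unipotent transformation; hence $\det(g-1)=0$ on $V$ precisely when $\det(g_{2'}-1)=0$ on $V$. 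So it suffices to prove that every semisimple $g\in G$ has eigenvalue $1$ on $V$. That $\St_2(G)$ is conjugate into $\Sp_m(2)$ rather than merely $\GL_m(2)$ follows from Lemma \ref{tt1}, since on odd-order elements the Brauer character of $\St_2$ agrees with the ordinary Steinberg character, which takes integer values there.

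I would then split into two cases according to whether $\si_q$ is radical. If $\si_q\in\mathcal R$, then Lemma \ref{mm2} applied with $\lam=\si_q$ and $\lam'=0$ gives $\si_q\succeq 0$, so the zero weight is a dominant weight lying below $\si_q$ and is therefore a weight of the irreducible complex Lie-algebra module of highest weight $\si_q$ (by the standard description of its weight set); by Lemma \ref{st2} it is then a weight of $V$. Thus $V$ carries the zero weight, and Lemma \ref{wr2} gives $V^g\neq 0$ for \emph{every} $g\in\mathbf G$, a fortiori for $g\in G$ --- more than enough. (This case includes $D_4(q)$, $C_3(q)$, $C_4(q)$ and, in type $C_n$, precisely the ranks $n\equiv 0,3\pmod 4$; it is also covered over $\C$ by Theorem \ref{z90}.) If $\si_q$ is not radical, then $(n,q)\neq(2,2)$ and, in type $D_n$, $n>4$, and Lemma \ref{rr1} gives
\[
\si_q\succ (q+1)\om_1\succ (q-1)\om_1+\om_2\succ (q-1)\om_1 .
\]
Since $q$ is even, the three weights on the right are dominant, and they lie below $\si_q$ in the partial order, hence each is a weight of the irreducible complex Lie-algebra module of highest weight $\si_q$, hence by Lemma \ref{st2} a weight of $V$. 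Proposition \ref{aa1}, applied with $m_1=m_2=m_3=1$, now shows that every semisimple $g\in G$ has eigenvalue $1$ on $V$. Combined with the reduction of the first paragraph, this proves that $\St_2$ is unisingular, and so $\St_2(G)$ is a fixed-point subgroup of $\Sp_m(2)$.

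The weight-theoretic core of the argument is thus delegated entirely to Lemmas \ref{st2}, \ref{mm2}, \ref{rr1}, \ref{wr2} and Proposition \ref{aa1}. I expect the only genuinely delicate points to be the bookkeeping at the two ends: invoking Steinberg's tensor-product theorem to identify $V|_G$ with $\St_2$ and to pin down $\dim V=|G|_2$, the characteristic-$2$ reduction from arbitrary to semisimple elements, and verifying that the degenerate small-rank groups cause no trouble --- in particular that $\Sp_4(2)\cong S_6$ is not simple (so $(n,q)=(2,2)$ is not a case of the theorem) and that $C_3$, $C_4$, $D_4$ land in the radical case and hence outside the scope of Lemma \ref{rr1}. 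None of these requires a new idea.
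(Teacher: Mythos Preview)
Your argument is essentially the paper's own, and in one respect it is cleaner: you split explicitly into the radical and non-radical cases and dispose of the radical case via the zero weight and Lemma \ref{wr2}, whereas the paper's written proof of Lemma \ref{spo1} only invokes Lemma \ref{rr1} (which assumes $\si_q$ non-radical) and tacitly leaves the radical congruence classes to Theorem \ref{z90}. The non-radical case is handled identically in both: Lemma \ref{st2} to import the characteristic-zero weight set, Lemma \ref{rr1} to locate $(q+1)\om_1$, $(q-1)\om_1+\om_2$, $(q-1)\om_1$ among the weights, and Proposition \ref{aa1} to conclude.

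The one point to correct is your dismissal of $(n,q)=(2,2)$. The paper does treat $C_2(2)$ as a case of Theorem \ref{st1} and settles it by direct inspection of the Brauer character table in \cite{JLPW}; your observation that $\Sp_4(2)\cong S_6$ is not simple does not exclude it under the paper's convention for ``simple group of Lie type'', and since both Lemma \ref{rr1} and Proposition \ref{aa1} explicitly bar $(n,q)=(2,2)$, your non-radical branch cannot absorb it either. Add the one-line inspection for $C_2(2)$ and your proof is complete.
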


 \begin{proof} 
 The case with $G=C_2(2)$ follows by inspection
 of the Brauer character table of $G$ \cite{JLPW}. Assume that $(n,q)\neq (2,2)$. Then $\St_q$ extends to a \ir \rep $\Phi_q$
 of the simple algebraic group $\mathbf{G}$ of type $C_n$ or $D_n$ \cite[Theorem 43]{St}. The highest weight of $\Phi_q$ is $\si_q$. By Lemma  \ref{st2},  the weights of $\Phi_q$
 are the same as in characteristic 0. By  \cite[Ch. VIII, \S7, Prop. 5(iv)]{Bo}, if $\rho,\mu$ are dominant weights, then $\rho\succ\mu$ implies $\mu$ to be a weight of $V$. In particular, $ (q+1)\om_1, (q-1)\om_1$ and $(q-1)\om_1+\om_2$ are weights of $V$ by Lemma \ref{rr1}.
 So the result follows from Proposition \ref{aa1}.
 \end{proof}

\subsection{The groups $\SL_n(q)$}

  In this section $G=\SL_{n+1}(q)\subset \GG=\SL_{n+1}(F)$, $n>1$,   where $F$ be an \acf of characteristic $r> 0$ and $q$ is an $r$-power. Let $\mathbf{T}$ be a maximal torus of $\GG$ whose rational \ir \reps are weights of $\GG$. By $W$ we denote the Weyl group of $\GG$. The conjugacy classes of maximal tori of $G$  are labeled by a partition $\pi=[n_1\geq \cdots\geq n_k]$ of $\{1\ld n+1\}$ \cite[3.23]{DM}, and we can write $T=T_\pi$ to denote a representative of the conjugacy class labeled by $\pi$.
For $i=1\ld k$ let $\zeta_j\in \overline{\F}_q$ be a primitive $(q^{n_j}-1)$-root of unity.
It is well known, and explained in detail in \cite[Section 3]{z16},  that $T$ is conjugate in $\GG$ to a subgroup of the diagonal matrix group
 $D_\pi=\diag(D_1\ld D_k)$, where $D_j$ is a cyclic group of order $q^{n_j}-1$ for $j=1\ld k$. More precisely, $D_j=\lan d_j\ran$, where $d_j=\diag(\zeta_j,\zeta_j^q\ld \zeta_j^{q^{n_j-1}})$. If $\phi$ is a rational \rep of $\GG=\SL_{n+1}(F)$ then
 $\phi|_{T_\pi}$ is equivalent to $\phi|_{D_\pi}$ when we identify $D_\pi$ with $T_\pi$.
 As the weights $\om$ of $\GG$ are the one-dimensional \ir rational \reps of $\mathbf{T}$, it is meaningful to write $\om|_{D_\pi}$, even if $\om$ is not a weight of $\phi$.

\begin{prop}{\rm \cite[Prop. 4.1]{z18}} \label{p31}
    Let     $m>1$ be an integer such that
     r is coprime to $(m^{n+1}-1)/(m-1)$. Let $\zeta\in F$ be a primitive $\big((m^{n+1}-1)/(m-1)\big)$-root of unity and set $t_m=\diag(\zeta,\zeta^{m},\zeta^{m^2}\ld \zeta^{m^{n}})\in\GG$.     Let $\mu=\om_1+(m-1)\om_i+\om_n$ $(1\leq i\leq n)$ be the weight of $\GG$. Then $g(\mu)(t_m)=1$ for some $g\in W$. \end{prop}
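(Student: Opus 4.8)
The plan is to introduce explicit coordinates on the weight lattice of type $A_n$ and reduce the statement to an elementary combinatorial identity. Take for $\mathbf{T}$ the maximal torus of diagonal matrices of $\GG=\SL_{n+1}(F)\subset\GL_{n+1}(F)$ (this is the torus of the statement, whose character group is identified with the full weight lattice), and let $e_1,\dots,e_{n+1}$ be the coordinate characters restricted to $\mathbf{T}$. Then $X^*(\mathbf{T})=\big(\bigoplus_j\Z e_j\big)/\Z(e_1+\cdots+e_{n+1})$, the fundamental weights are $\om_k=e_1+\cdots+e_k$, and $W\cong S_{n+1}$ acts by permuting the $e_j$; moreover $e_j(t_m)=\zeta^{m^{j-1}}$ and $t_m\in\mathbf{T}$. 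In particular $\det(t_m)=\zeta^{1+m+\cdots+m^n}=\zeta^{N}=1$, where $N=(m^{n+1}-1)/(m-1)=1+m+\cdots+m^n$, so $t_m$ indeed lies in $\GG$; and $\mu$ is a genuine weight of $\GG$ because it is a $\Z$-combination of fundamental weights.

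First I would expand $\mu=\om_1+(m-1)\om_i+\om_n$ in the basis $e_1,\dots,e_{n+1}$; collecting coefficients gives
$$
\mu=(m+1)\,e_1+m\,(e_2+\cdots+e_i)+(e_{i+1}+\cdots+e_n)+0\cdot e_{n+1},
$$
so the coefficients of $\mu$ form the multiset with one entry $m+1$, with $i-1$ entries equal to $m$, with $n-i$ entries equal to $1$, and with one entry $0$ (with the evident reading when $i=1$ or $i=n$). For $g\in W$ the weight $g(\mu)$ has the same coefficients attached to the $e_j$ in some permuted order, hence
$$
g(\mu)(t_m)=\zeta^{\sum_j c_j\,m^{\pi(j)}},
$$
where $\{c_j\}$ is this multiset and, as $g$ runs over $W$, the exponent assignment $\pi$ runs over all bijections $\{1,\dots,n+1\}\to\{0,1,\dots,n\}$. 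Since $\zeta$ has order $N$, it therefore suffices to find one such $\pi$ with $\sum_j c_j m^{\pi(j)}\equiv 0\pmod N$; I will in fact make this sum equal $N$ exactly.

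The required assignment: give the exponent $0$ to the coefficient $m+1$, the exponents $1,\dots,i-1$ to the $i-1$ coefficients equal to $m$, the exponents $i+1,\dots,n$ to the $n-i$ coefficients equal to $1$, and the remaining exponent $i$ to the coefficient $0$. Then each of $0,1,\dots,n$ is used exactly once, so $\pi$ is a bijection, and
\begin{align*}
\sum_j c_j m^{\pi(j)} &= (m+1)\,m^{0}+m\,(m^{1}+\cdots+m^{i-1})+(m^{i+1}+\cdots+m^{n})\\
&= 1+m+m^{2}+\cdots+m^{n}=N.
\end{align*}
Hence $g(\mu)(t_m)=\zeta^{N}=1$ for the corresponding $g\in W$, as claimed. (The hypothesis that $r$ is coprime to $N$ is used only to guarantee the existence in $F$ of a primitive $N$-th root of unity $\zeta$.)

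I do not anticipate a genuine obstacle: the entire content is the telescoping identity displayed above. The only things requiring care are the bookkeeping — verifying that the prescribed assignment really is a bijection onto $\{0,\dots,n\}$ — and the degenerate ranges $i=1$ (no coefficient equal to $m$) and $i=n$ (no coefficient equal to $1$), in which one block of exponents is empty, although both the bijection claim and the identity persist there; the hypothesis $n>1$ ensures $0$ and $1$ are distinct exponents in $\{0,\dots,n\}$. One should also fix a convention for the $W$-action on $X^*(\mathbf{T})$, but as the set $\{g(\mu):g\in W\}$ is unaffected by replacing $g$ with $g^{-1}$, either convention gives the same conclusion.
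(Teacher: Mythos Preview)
Your proof is correct: the expansion of $\mu$ in the $e_j$-coordinates, the identification of the $W$-orbit with all rearrangements of the coefficient multiset, and the telescoping identity $(m+1)+m(m+\cdots+m^{i-1})+(m^{i+1}+\cdots+m^n)=N$ all check out, including in the boundary cases $i=1$ and $i=n$. (In fact the argument does not require $n>1$; your remark about that hypothesis is unnecessary, though harmless.)

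The paper does not supply its own proof of this proposition; it simply quotes the statement from \cite[Prop.~4.1]{z18}. So there is no in-paper argument to compare against, but your direct coordinate computation is exactly the sort of elementary verification one would expect for this result.
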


Next we mimic the reasoning for $q=2$ in  \cite[\S 7]{z18}. 

\begin{prop}\label{n10}
Let $T=T_\pi$ be a maximal torus of G  corresponding to a partition $\pi=[n_1\ld n_k]$.
Let $\lam_i=(q-1)\om_i$ for $1\leq i\leq n$.
Suppose that  $i=n_{j_1}+\cdots+n_{j_l}$ for some subset $\{j_i\ld j_l\}$ of  $\{1\ld k\}$. Then the W-orbit of  $\lam_i$ has a weight $\mu$ such that $\mu|_T=1_T$.
\end{prop}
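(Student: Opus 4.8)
The plan is to work with the explicit diagonal model of the torus $T=T_\pi$ given just above the statement, namely $T \cong D_\pi = \diag(D_1, \dots, D_k)$ with $D_j = \langle d_j \rangle$, $d_j = \diag(\zeta_j, \zeta_j^q, \dots, \zeta_j^{q^{n_j-1}})$ and $\zeta_j$ a primitive $(q^{n_j}-1)$-root of unity. The key point is that $\lambda_i = (q-1)\om_i$ restricts to $D_\pi$ as the $(q-1)$-st power of the fundamental weight $\om_i$, and $\om_i$ on the diagonal torus of $\SL_{n+1}$ is (the restriction of) the $i$-th elementary symmetric character, i.e.\ up to the $W$-action it is the map sending a diagonal matrix to the product of some $i$ of its entries. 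Conjugating by $W = S_{n+1}$ permutes the $n+1$ diagonal entries of $d:=\diag(d_1,\dots,d_k)$, so a weight $\mu$ in the $W$-orbit of $\lambda_i$ is exactly the character $d \mapsto (\text{product of } i \text{ chosen entries})^{q-1}$, the choice of $i$ entries being arbitrary.

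So the first step is to reduce the claim to: there is a choice of $i$ of the $n+1$ diagonal entries of $d$ whose product $P$ satisfies $P^{q-1}=1$. Here I would use the hypothesis $i = n_{j_1} + \cdots + n_{j_l}$: choose, for each index $j \in \{j_1,\dots,j_l\}$, \emph{all} $n_j$ entries of the block $d_j$, so that the total number of chosen entries is exactly $i$. The second step is then the computation that the product of the entries of a single block $d_j$ is $\zeta_j^{1 + q + q^2 + \cdots + q^{n_j - 1}} = \zeta_j^{(q^{n_j}-1)/(q-1)}$, whose $(q-1)$-st power is $\zeta_j^{q^{n_j}-1} = 1$ since $\zeta_j$ has order $q^{n_j}-1$. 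Multiplying over the chosen blocks, the product $P$ of all $i$ chosen entries still satisfies $P^{q-1}=1$. Hence the corresponding $\mu$ in the $W$-orbit of $\lambda_i$ has $\mu|_T = 1_T$, as required.

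The third (bookkeeping) step is to make precise the identification of $\mu|_T$ with $\mu|_{D_\pi}$ and the statement that $W$ acts on these characters by permuting diagonal coordinates: this is spelled out in the passage preceding the statement (and in \cite[Section 3]{z16}), and one should cite it rather than reprove it. One small subtlety to handle carefully is that the $\zeta_j$ across different blocks are \emph{a priori} unrelated roots of unity living in $\overline{\F}_q$, so the argument must keep the blocks' contributions to $P^{q-1}=1$ independent — which the computation above does, since each block contributes $1$ on its own.

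The main obstacle — really the only non-formal point — is getting the dictionary between abstract weights of $\GG=\SL_{n+1}(F)$ and characters of the diagonal torus right, in particular confirming that $(q-1)\om_i$ does restrict on $D_\pi$ to the $(q-1)$-st power of a product-of-$i$-coordinates character and that the $W$-orbit of $\lambda_i$ is exhausted (up to the kernel of restriction) by the various such products. Once that is in hand, the selection ``take whole blocks'' dictated by the hypothesis on $i$ and the telescoping exponent $(q^{n_j}-1)/(q-1)$ finish things immediately; this mirrors the $q=2$ argument of \cite[\S 7]{z18}, which the authors explicitly say they are imitating.
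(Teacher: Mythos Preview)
Your argument is correct, but it takes a different route from the paper's. The paper does not construct an explicit $\mu$; instead it invokes \cite[Corollary 3.7]{z16}, which says that the number of weights $\mu\in W\lambda_i$ with $\mu|_T=1_T$ equals the value $1_Y^{S_{n+1}}(\pi)$ of the permutation character induced from the Young subgroup $Y\cong S_i\times S_{n+1-i}$, evaluated at an element of cycle type $\pi$. That value is positive precisely when an element of cycle type $\pi$ is conjugate into $Y$, i.e.\ when $i$ is a sum of some parts of $\pi$ --- exactly the hypothesis. So the paper's proof is a one-line appeal to a counting formula.

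Your approach is more elementary and fully self-contained: you exhibit the specific $\mu$ (take all coordinates from the blocks $j_1,\dots,j_l$) and verify $\mu|_{D_\pi}=1$ by the telescoping computation $\bigl(\zeta_j^{(q^{n_j}-1)/(q-1)}\bigr)^{q-1}=1$. This avoids the dependence on \cite{z16} and makes the weight explicit, at the cost of not giving the exact count of such $\mu$ that the induced-character formula provides. One small point worth tightening in your write-up: you phrase the computation for the single element $d=\diag(d_1,\dots,d_k)$, but $D_\pi$ is the full product $D_1\times\cdots\times D_k$, so you should check $\mu(x)=1$ for an arbitrary $x=\diag(d_1^{a_1},\dots,d_k^{a_k})$; of course the same block-by-block computation goes through with $\zeta_j$ replaced by $\zeta_j^{a_j}$, and you essentially acknowledge this in your ``small subtlety'' paragraph.
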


\begin{proof}   Let $W\lam_i$ be  the $W$-orbit of $\lam_i$.
By \cite[Corollary 3.7]{z16},   the number of weights $\mu\in W\lam_i$ such that $\mu|_T=1_T$ equals
$1_{Y}^{S_{n+1}}(\pi)$, where $W\cong S_{n+1}$ is the Weyl group of $\GG$ and $Y\cong S_i\times S_{n+1-i}$ is the Young subgroup of $S_{n+1}$ labeled by $[i, n+1-i]$.
Then  $1_{Y}^{S_{n+1}}(\pi)> 0$ whenever $\pi$ is conjugate to an element of $Y$. This happens \ii $i$ is the sum of some parts of $\pi$, as stated. \end{proof}

\begin{lem}\label{n16}
 Let 
 $\phi$ be an \irr of  $\GG$
with highest weight $\om $. Let T be a maximal torus of G. For a fixed $i=1\ld n$
set $\kappa_i=(q-1)\om_i+\om_1+\om_n$ and $\lam_i=(q-1)\om_i$, and let $W\kappa_i$, $W\lam_i$ be the W-orbits of these weights.
Then $\mu|_{D_\pi}=1_{D_\pi}$ for some weight $\mu\in (W\kappa_i\cup W\lam_i)$.
\end{lem}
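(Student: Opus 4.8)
The plan is to turn the statement into a purely combinatorial distribution problem and then split into two cases according to whether $i$ can be written as a sum of a subset of the parts of $\pi=[n_1\ge\cdots\ge n_k]$. Recall from the setup that $W\cong S_{n+1}$ acts on a weight (lifted to $\Z^{n+1}$) by permuting its $n+1$ coordinates, and that, with $T=T_\pi$ identified with $D_\pi=\diag(D_1\ld D_k)$ where $D_j=\lan d_j\ran$ and $d_j=\diag(\zeta_j,\zeta_j^q\ld \zeta_j^{q^{n_j-1}})$, a weight $\mu$ restricts trivially to $D_\pi$ as soon as, for every block $j$, the coordinates $(b_0\ld b_{n_j-1})$ of (a lift of) $\mu$ in that block satisfy $\sum_{l=0}^{n_j-1}b_l\,q^l\equiv 0\pmod{q^{n_j}-1}$. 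Thus it suffices to show that the multiset of coordinates of $\kappa_i$, or of $\lam_i$, can be distributed among the blocks so that every block sum vanishes modulo the relevant $q^{n_j}-1$.

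If $i=n_{j_1}+\cdots+n_{j_l}$ for some subset $\{j_1\ld j_l\}\subseteq\{1\ld k\}$, then I would invoke Proposition \ref{n10} directly: it produces $\mu\in W\lam_i$ with $\mu|_T=1_T$, which is exactly the assertion since $T$ is identified with $D_\pi$. (Explicitly, one fills the blocks $j_1\ld j_l$ with the $i$ coordinates equal to $q-1$ and every other block with $0$'s; a full $(q-1)$-block sums to $(q-1)(1+q+\cdots+q^{n_j-1})=q^{n_j}-1\equiv 0$ and a $0$-block sums to $0$.)

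The substantive case is when $i$ is \emph{not} a sum of a subset of the parts; here I would use $W\kappa_i$. Lifting $\kappa_i=(q-1)\om_i+\om_1+\om_n$ to $\Z^{n+1}$ (using $1\le i\le n$) gives the vector having one coordinate $q$, then $i-1$ coordinates $q-1$, then $n-i$ coordinates $0$, and one coordinate $-1$. Let $l$ be the largest index with $n_1+\cdots+n_l\le i$; since $n_1+\cdots+n_k=n+1>n\ge i$ we get $l<k$, and $s:=i-(n_1+\cdots+n_l)$ lies in $\{0\ld n_{l+1}-1\}$ by maximality of $l$. Were $s=0$ then $i=n_1+\cdots+n_l$ would be a sum of parts; hence $1\le s\le n_{l+1}-1$, and in particular $n_{l+1}\ge 2$. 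I then distribute the coordinates of $\kappa_i$ as follows: fill blocks $1\ld l$ entirely with $q-1$'s; in block $l+1$ put the coordinate $q$ at power $q^0$, the coordinates $q-1$ at powers $q^1\ld q^{s-1}$, the coordinate $-1$ at power $q^s$, and $0$'s at the remaining $n_{l+1}-s-1$ positions; fill all remaining blocks with $0$'s. A quick count confirms this uses exactly the coordinates of $\kappa_i$: $i-1=(n_1+\cdots+n_l)+(s-1)$ copies of $q-1$, one $q$, one $-1$, and $n-i$ copies of $0$. The block sums are $q^{n_j}-1\equiv 0$ for the full $(q-1)$-blocks, $0$ for the $0$-blocks, and for block $l+1$
\[
q\cdot q^{0}+(q-1)(q^{1}+\cdots+q^{s-1})+(-1)q^{s}=q+(q^{s}-q)-q^{s}=0 .
\]
Hence the resulting $\mu\in W\kappa_i$ satisfies $\mu|_{D_\pi}=1_{D_\pi}$, as required.

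The only point that is not bookkeeping is the choice of $(l,s)$: one must know that when $i$ fails to be a sum of parts, taking a prefix of the descending-ordered parts always leaves a remainder $s$ with $1\le s\le n_{l+1}-1$, so that the block $D_{l+1}$ has room to absorb the extra $q$ and the $-1$ coming from $\om_1+\om_n$ via the telescoping identity $(q-1)(q+\cdots+q^{s-1})=q^{s}-q$. I expect this to be precisely the argument that \cite[\S7]{z18} carries out for $q=2$, now run for general $q$; everything else is matching multisets and reducing block sums modulo $q^{n_j}-1$.
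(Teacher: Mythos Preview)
Your proof is correct and follows essentially the same strategy as the paper's: split into the sum-of-parts case (handled by Proposition~\ref{n10}) and otherwise distribute the coordinates of $\kappa_i$ among the blocks with a telescoping computation in the one block that absorbs the remainder. The only cosmetic differences are that the paper works with the lift $(q+1,q,\ldots,q,1,\ldots,1,0)$ of $\kappa_i$ rather than your $(q,q-1,\ldots,q-1,0,\ldots,0,-1)$, so it finishes via $\det x=1$ instead of your direct block-sum $0$, and it allows an arbitrary subset of parts (with remainder strictly smaller than every remaining part) rather than just the descending prefix you use.
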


\begin{proof}
As explained above, $D_\pi=\diag(D_1\ld D_k)$, where $D_j$ is a cyclic group of order $q^{n_j}-1$ for $j=1\ld k$. More precisely, $D_j=\lan d_j\ran$, where $d_j=\diag(\zeta_j,\zeta_j^q\ld \zeta_j^{q^{n_j-1}})$ and $\zeta_j$ a primitive $(q^{n_j}-1)$-root of unity in $F$.
One observes that $(q\ep_u)(x)=\ep_{u+1}(x)$ for every $x\in D$ and $u\in\{1\ld n+1\}$,  unless $u=n_1+\cdots +n_j $ for $j\in\{1\ld k\}$. In addition, $\det d_1=(\ep_1+\cdots +\ep_{n_1})(d_1)=\zeta_1^{1+q+\cdots +q^{n_1-1}}=\zeta_1^{(q^{n_1}-1)/(q-1)}\in \FF_q$ as this is an element of order $q-1$ in $F$. Similarly, for other matrices $d_2\ld d_k$.

First observe that there exists a subset $\{n_{j_1}\ld n_{j_l}\}$ of   $\{n_{1}\ld n_{k}\}$ such that
$m:=i-1-n_{j_1}-\cdots -n_{j_l}<n_r$ for $r\in \{n_{1}\ld n_{k}\}$ and $r\neq j_1\ld j_l$. (This is trivial as $ n_1+\cdots +n_k=n+1>i$. Note that this subset may be empty, and then $m=i-1 $.)

Suppose first that $m+1=n_r$ for some $r$. Then $i=n_{j_1}+\cdots +n_{j_l}+n_r$.
  So in this case the result follows  from Lemma \ref{n10}.
Therefore, we now assume that $m+1<n_r$ for every $r$.

Let we fix these subset $n_{j_1}\ld n_{j_l}$, and define a subgroup $D'\in \GG$ by moving
  $D_{j_1}\ld D_{j_l}$ to the positions $k+1-l\ld k$ and  the submatrices $D_u$ for $u\in \{k-l+1\ld k\}$  and $u\neq j_i\ld j_l$ to the positions prior
to  $ k+1-l$.  Set $n'=n_{j_1}+\cdots+ n_{j_l}$ so $i=m+1+n'$.  Clearly, $D$ and $D'$ are conjugate in $\GG$ so it suffices to prove that there is a weight $\mu$ of $\phi$ such that $\mu(D')=1$.

   Note that $\kappa_i=(q-1)\om_i+\om_1+\om_n=(q+1)\ep_1+q\ep_2+\cdots +q\ep_i+\ep_{i+1}+\cdots +\ep_n$. The $W$-orbit of $\kappa_i$ contains the weight $\mu=(q+1)\ep_1+q\ep_2+\cdots +q\ep_{m+1}+ \ep_{m+3}+\cdots +\ep_{n+1-n'} +q\ep_{n+2-n'}+\cdots +q\ep_{n+1}
$. Then $\mu(D')=((q+1)\ep_1+q\ep_2+\cdots +q\ep_{m+1}+ \ep_{m+3}+\cdots +\ep_{n+1-n'}
)(x)\cdot (q\ep_{n+2-n'}+\cdots +q\ep_{n+1})(x)$ for $x\in D'$.

Consider the second multiple. 
Here $(\ep_{n+2-n'}+\cdots +\ep_{n+1})(x)$ is the product of the diagonal entries of certain matrices $d_j$ above, and hence this lies in $\FF_q$. Therefore, $(q\ep_{n+2-n'}+\cdots +q\ep_{n+1})(x)=(\ep_{n+2-n'}+\cdots +\ep_{n+1})(x)$.

 Consider the first multiple. As $n_r>m+1$ for every $r\neq j_1\ld j_l$, $r\in \{1\ld k\}$,  we have
$((q+1)\ep_1+q\ep_2+\cdots +q\ep_{m+1})(D')=(\ep_1+(q+1)\ep_2+q\ep_3+\cdots +q\ep_{m+1})(D')=(\ep_1+\cdots +\ep_{m+1}+\ep_{m+2})(D')$. Therefore, $((q+1)\ep_1+q\ep_2+\cdots +q\ep_{m+1}+ \ep_{m+3}+
 \cdots +\ep_{n+1-n'})(x)=(\ep_1+\cdots +\ep_{n+1-n'})(x)$. (To make it more clear, write $D'=\diag(D_1'\ld D_k')$.
Let $n_j'$ be the size of the matrix $D_j'$ $(j=1\ld k)$. Then $n_1'>m+1$ by the above, and $q\ep_u(x)=\ep_{u+1}(x)$ for $u=1\ld n_1'-1$ and any $x\in D'$.)

Finally, $((\ep_1+\cdots +\ep_{n+1-n'})(x))\cdot ((\ep_{n+2-n'}+\cdots +\ep_{n+1})(x))=\det x=1$.
\end{proof}

\begin{rmk} If $\phi$ is $p$-restricted then $(q-1)\om_i$ is a weight of $\phi$ whenever so is $(q-1)\om_i+\om_1+\om_n$. In general, this is not probably true. The following special case
illustrates the situation.\end{rmk}

\begin{lem}\label{ww2}
Let $\phi$ be an \irr of $\GG$ with highest weight $ (q-1)\om_i+\om_1+\om_n$. Then $(q-1)\om_i$
is a weight of $\phi$.\end{lem}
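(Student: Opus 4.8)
The plan is to produce an explicit nonzero weight vector of weight $(q-1)\om_i$ inside $\phi$ by applying a single lowering operator to a highest weight vector. The starting observation is that, since $\GG=\SL_{n+1}(F)$ has type $A_n$ with $n>1$, the highest root $\theta$ of the root system is $\al_1+\cdots+\al_n$, which written in terms of the fundamental weights equals $\om_1+\om_n$. Hence, writing $\kappa=(q-1)\om_i+\om_1+\om_n$ for the highest weight of $\phi$, we have $\kappa-\theta=(q-1)\om_i$, so it suffices to show that lowering a highest weight vector of $\phi$ by the root $\theta$ yields a nonzero vector.

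To carry this out I would pass to the Lie algebra. Regard $\phi$ as a module for $\g=\operatorname{Lie}(\GG)=\mathfrak{sl}_{n+1}(F)$; then the $\mathbf{T}$-weight decomposition refines the $\mathfrak{h}$-weight decomposition, and a vector of $\mathbf{T}$-weight $\mu$ is an $\mathfrak{h}$-eigenvector on which the coroot element $h_\theta$ acts by the image in $F$ of the integer $\langle\mu,\theta^\vee\rangle$. Fix the $\mathfrak{sl}_2$-triple $(e_\theta,h_\theta,f_\theta)$ attached to $\theta$ --- concretely $e_\theta=E_{1,n+1}$, $f_\theta=E_{n+1,1}$ and $h_\theta=E_{1,1}-E_{n+1,n+1}$ --- and let $v$ be a nonzero highest weight vector of $\phi$. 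Since $\theta$ is a positive root, $e_\theta v=0$, so the relation $[e_\theta,f_\theta]=h_\theta$ gives $e_\theta(f_\theta v)=h_\theta v$. As $A_n$ is simply laced, $\theta^\vee=\al_1^\vee+\cdots+\al_n^\vee$, and $\langle\om_j,\al_k^\vee\rangle=\delta_{jk}$ yields $\langle\kappa,\theta^\vee\rangle=(q-1)+1+1=q+1$. Because $r=\operatorname{char}F$ divides $q$, the scalar $q+1$ is nonzero in $F$, so $e_\theta(f_\theta v)=(q+1)\,v\neq 0$, and hence $f_\theta v\neq 0$. Since $f_\theta v$ has $\mathbf{T}$-weight $\kappa-\theta=(q-1)\om_i$, this shows $(q-1)\om_i$ is a weight of $\phi$.

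I do not expect a genuine obstacle here: the statement is soft, and the only point requiring a little care is the characteristic-$r$ bookkeeping, namely that $h_\theta$ contributes exactly the scalar $\langle\kappa,\theta^\vee\rangle$ on the highest weight line and that this scalar, being $q+1$, does not vanish in $F$. If one preferred to avoid the infinitesimal computation, an alternative would be to appeal to Premet's theorem --- valid for type $A_n$ in every characteristic --- which says that the set of weights of $\phi$ coincides with the set of weights of the Weyl module of the same highest weight; then the dominant weight $(q-1)\om_i$, which satisfies $(q-1)\om_i\preceq\kappa$ since $\kappa-(q-1)\om_i=\theta\in\mathcal{R}^+$, is automatically a weight of $\phi$ by Lemma \ref{mm2} and the Bourbaki criterion used elsewhere in this section. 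I would nonetheless keep the $\mathfrak{sl}_2$ argument as the main proof, as it is elementary and self-contained.
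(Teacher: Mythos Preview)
Your main argument via the $\mathfrak{sl}_2$-triple attached to the highest root is correct and is genuinely different from the paper's route. The paper does not compute with root vectors at all: it observes that when $q=p$ the highest weight is $p$-restricted and invokes Premet's theorem, while for $q>p$ it applies Steinberg's tensor product theorem to write $\phi=\phi_1\otimes\phi_2$ with highest weights $(p-1)\om_i+\om_1+\om_n$ and $(q-p)\om_i$, then uses Premet on the $p$-restricted factor $\phi_1$ to get the weight $(p-1)\om_i$ there and adds it to the highest weight of $\phi_2$. Your approach is more elementary and uniform in $q$: a single application of $f_\theta$ to the highest weight vector suffices, the key numerical point being that $\langle\kappa,\theta^\vee\rangle=q+1\equiv 1$ in $F$, so no case split on $q$ versus $p$ is needed.

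One caution about your proposed alternative: Premet's theorem, as stated in the reference the paper uses, applies to \emph{$p$-restricted} highest weights. The weight $(q-1)\om_i+\om_1+\om_n$ is not $p$-restricted once $q>p$ (the $\om_i$-coefficient is at least $p$), so you cannot invoke Premet directly on $\phi$ in that range; this is exactly why the paper passes through the Steinberg tensor decomposition first. Your $\mathfrak{sl}_2$ argument sidesteps this issue entirely, so keep it as the proof and drop the Premet alternative, or at least note that it only covers the case $q=p$.
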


\begin{proof} If $q$ is a prime then the result follows from Premet's theorem \cite[p.23]{Hu1}. Let $q>p$ be a $p$-power.
Then $\phi=\phi_1\otimes\phi_2$, where $(p-1)\om_i+\om_1+\om_n$, $(q-p)\om_i$ are the highest weights of $\phi_1,\phi_2$, respectively. The weights of a tensor product are $\lam+\mu$ with $\lam,\mu$ to be weights of $\phi_1,\phi_2$, respectively. By Premet's theorem, $(p-1)\om_i$ is a weight of $\phi_1$ so $ (q-1)\om_i=(p-1)\om_i+(q-p)\om_i$ is a weight of $\phi$.
\end{proof}

\bp[Proof of Theorem {\rm \ref{s21}}] Clearly, $W(m_1\kappa_i)=m_1(W\kappa_i)$
and $W(m_2\lam_i)=m_2(W\lam_i)$. Therefore, it follows from Lemma \ref{n16} that
$\nu|_{D_\pi}=1_{D_\pi}$ for a suitable weight $\nu\in W(m_1\kappa_i)\cup W(m_2\lam_i)$.
As $\nu$ is a weight of $V$ and $D_\pi$ is $\mathbf{G}$-conjugate to $T_\pi$, the result follows.
\enp

\begin{lem}\label{sts} Theorem {\rm \ref{st1}} is true if $G=\SL_{n+1}(q)$.\end{lem}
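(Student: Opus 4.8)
The plan is to deduce Lemma~\ref{sts} by verifying the hypotheses of Theorem~\ref{s21}, applied to the $2$-modular Steinberg representation $\St_2$ of $G=\SL_{n+1}(q)$, $q$ even, and then invoking Theorem~\ref{tt1} to make the representation symplectic. Recall that by \cite[Theorem 43]{St} the modular Steinberg representation $\St_2$ of $G$ extends to an irreducible rational representation $\Phi_q$ of the simple algebraic group $\mathbf{G}=\SL_{n+1}(F)$ with highest weight $\sigma_q=(q-1)(\om_1+\cdots+\om_n)$, and by Lemma~\ref{st2} the set of weights of $\Phi_q$ coincides with the set of weights of the irreducible module of the Lie algebra of type $A_n$ with highest weight $\sigma_q$. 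So it suffices to exhibit, for some $i\in\{1\ld n\}$ and some natural numbers $m_1,m_2$, that $m_1(q-1)\om_i+\om_1+\om_n$ and $m_2(q-1)\om_i$ are weights of $V=\Phi_q$.

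First I would take $i=1$ (any $i$ works symmetrically, but $i=1$ is cleanest) and $m_1=m_2=1$, so that the two candidate weights are $\kappa_1=(q-1)\om_1+\om_1+\om_n=q\om_1+\om_n$ and $\lam_1=(q-1)\om_1$. To see these are weights of $\Phi_q$, I would argue by the subdominance criterion \cite[Ch.~VIII, \S7, Prop.~5(iv)]{Bo}: a dominant weight $\mu$ with $\sigma_q\succeq\mu$ and $\sigma_q-\mu\in\mathcal{R}$ is a weight of the module of highest weight $\sigma_q$; by Lemma~\ref{mm2}, once $\sigma_q-\mu\in\mathcal{R}$ is dominant the relation $\sigma_q\succeq\mu$ is automatic. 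Since $q$ is even, $\sigma_q-\kappa_1$ and $\sigma_q-\lam_1$ both lie in the root lattice $\mathcal{R}$ (the coefficient of each $\om_j$ in $\sigma_q$ is $q-1$, odd, and subtracting $\kappa_1$ or $\lam_1$ changes these coefficients by amounts that keep the total an element of $\mathcal{R}$ — concretely one checks directly in terms of the $\ep_j$ coordinates). It remains to check dominance of $\sigma_q-\kappa_1$ and $\sigma_q-\lam_1$, which for $n>1$ and $q$ even (so $q-1\geq 1$, and for $q\geq 4$ there is ample room, while $q=2$ needs a small separate check) is a routine coordinate computation; alternatively one can cite Lemma~\ref{ww2} to get $\lam_1=(q-1)\om_1$ as a weight of the submodule of highest weight $\kappa_1$, hence of $\Phi_q$, once $\kappa_1$ itself is shown to be a weight.

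Having checked condition~(1) of Theorem~\ref{s21}, that theorem yields that every semisimple element $g\in G$ has eigenvalue $1$ on $V=\St_2$. Since we are in characteristic $2$, an element $g\in G$ is unipotent or has the form $g=g_sg_u$; I would note that $\St_2$ restricted to a unipotent element always has eigenvalue $1$ (it is the only eigenvalue, as $\St_2$ on a $2$-group of $G$ is a sum of copies of the regular-type module whose only eigenvalue is $1$ — equivalently, a unipotent element has a trivial Jordan block since $V^{g}\neq 0$ because $g$ fixes the lowest weight line), and more generally for $g=g_sg_u$ commuting, $g_s$ has eigenvalue $1$ on $V$ with eigenspace $V^{g_s}$ which is $g_u$-stable, and $g_u$ acting on that nonzero subspace again has eigenvalue $1$. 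Hence $\St_2$ is unisingular on all of $G$. Finally, since all elements of odd order in $G=\SL_{n+1}(q)$ are real (this is standard for $\SL$ in even characteristic, or can be cited), the Brauer character of $\St_2$ is rational-valued on $2'$-elements, so by Lemma~\ref{tt1} the representation is equivalent to one into $\Sp_m(2)$ with $m=|G|_2$, completing the proof.

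The main obstacle I anticipate is the small-rank / small-$q$ bookkeeping: verifying that $\sigma_q-\kappa_1$ is dominant (equivalently that $\kappa_1$ is subdominant to $\sigma_q$) when $q=2$ and $n$ is small, and making sure the chosen $i$ and multiplicities genuinely satisfy Theorem~\ref{s21}(1) in all cases rather than just generically — this is exactly the kind of weight-combinatorics that Lemma~\ref{n16} and Lemma~\ref{ww2} were set up to handle, so I expect it to go through, but it is where the care is needed. A secondary point to be careful about is the hypothesis ``$G$ not of type $A_1(q)$'' in Theorem~\ref{st1}: for $n=1$ Theorem~\ref{s21} is not applicable (it requires $n>1$), consistent with Lemma~\ref{sl2}, so the statement of Lemma~\ref{sts} should be read with $n+1\geq 3$, i.e. $n>1$.
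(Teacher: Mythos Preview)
Your overall plan---verify condition~(1) of Theorem~\ref{s21} for the Steinberg module $\Phi_q$ and conclude---matches the paper's strategy. The fatal gap is your choice $i=1$. For $\mathbf{G}$ of type $A_n$ one has $\Lambda/\mathcal{R}\cong\Z/(n+1)$ with $\om_j\mapsto j$, so root-lattice membership is governed by residues modulo $n{+}1$, not by parity of the $\om_j$-coefficients as you suggest. For $n$ odd (the only case not covered by Theorem~\ref{z90}, hence the only case Lemma~\ref{sts} must handle) and $q$ even one computes
\[
\sigma_q\equiv (q-1)\tfrac{n(n+1)}{2}\equiv \tfrac{n+1}{2}\pmod{n+1},\qquad
\kappa_1=q\om_1+\om_n\equiv q+n\equiv q-1\pmod{n+1},
\]
and these are unequal in general (e.g.\ $n=3$, $q=2$: $\sigma_2\equiv 2$, $\kappa_1\equiv 1$ in $\Z/4$). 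Thus $\sigma_q-\kappa_1\notin\mathcal{R}$, and $\kappa_1$ is simply not a weight of $\Phi_q$; the same obstruction kills $\lambda_1=(q-1)\om_1$. Your claim that ``any $i$ works symmetrically'' is therefore false, and the dominance check you flag as the obstacle is not the real issue (indeed $\sigma_q-\kappa_1$ has $\om_1$-coefficient $-1$ for every $q$, so Lemma~\ref{mm2} never applies to it anyway).

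The paper's fix is to take $i=(n+1)/2$. Then $(q-1)\om_i\equiv (q-1)\tfrac{n+1}{2}\equiv \tfrac{n+1}{2}\equiv\sigma_q\pmod{n+1}$ (since $q-1$ is odd), so both $\kappa_i=(q-1)\om_i+\om_1+\om_n$ and $\lambda_i=(q-1)\om_i$ lie in $\sigma_q+\mathcal{R}$. Moreover $\sigma_q-\kappa_i$ and $\sigma_q-\lambda_i$ are visibly dominant (all $\om_j$-coefficients are $q-2$, $q-1$, or $0$), so Lemma~\ref{mm2} gives $\sigma_q\succ\kappa_i\succ\lambda_i$, and via Lemma~\ref{st2} these are weights of $\Phi_q$; the paper then concludes directly from Lemma~\ref{n16} (equivalently from Theorem~\ref{s21}). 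Your closing remarks on unipotent elements and on landing in $\Sp_m(2)$ are peripheral: unisingularity is detected on semisimple elements since eigenvalues of $g$ and $g_s$ agree, and the symplectic embedding follows from the classical fact that the Steinberg character is integer-valued, not from any reality statement about odd-order elements of $\SL_{n+1}(q)$ (which is not true in general).
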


\begin{proof} By Lemma \ref{mm2}, we have $\si_q\succ  (q-1)\om_{(n+1)/2}+\om_1+\om_n\succ (q-1)\om_{(n+1)/2}$, so the weights  $(q-1)\om_{(n+1)/2}+\om_1+\om_n$ and $ (q-1)\om_{(n+1)/2}$ are weights of  $\Phi_{q}$. Then the result follows from Lemma \ref{n16}.\end{proof}

\bp[Proof of Theorem {\rm \ref{st1}}] The result follows from Lemmas \ref{spo1} and \ref{sts}
for groups $C_n(q)$, $D^\pm_n(q)$ and $A_n(q)$, respectively, and from Lemma  \ref{su9}  for $G={}^2B_2(q)$. For the remaining groups non excluded in Theorem \ref{st1} are contained in Theorem \ref{z1}.\enp

\begin{proof}[Proof of Corollary {\rm \ref{avcor}}]
Let $G$ be as in the statement of the Corollary and let $A$ be an abelian variety defined over $\Q$ of dimension $m/2$ such that $\im \overline{\rho}_2 = \Sp_{m}(2)$.  Extend scalars to $K = \Q(A[2])^G$, the fixed of $G$ in the 2-division field of $A$.  Then, over $K$, $\im \overline{\rho}_2 = G$; the Corollary follows because $G$ is fixed-point and absolutely irreducible.
\end{proof}

\noindent \textbf{Acknowledgements.} We would like to thank Gunter Malle for helpful correspondence.

\appendix

\section{{\sf Magma} Code} \label{appendix}

Many of the computations in this paper were performed using the computer algebra {\sf Magma} at the online calculator \url{http://magma.maths.usyd.edu.au/calc/}.  In this appendix we share the code we used, together with some sample calculations to show the reader how we arrived at some of our conclusions.

To initiate the group $\Sp_8(2)$ and store its maximal subgroups, we use: \\

\begin{tabular}{l}
\texttt{G:=Sp(8,2);} \\
\texttt{M:=MaximalSubgroups(G);} \\[10pt]
\end{tabular}

To run through the maximal subgroups of $G$ and check whether any are fixed-point groups and what the dimensions of the simple factors are, we use the following double loop: \\

\begin{tabular}{l}
\texttt{for i:=1 to \#M do;}\\
\begin{tabular}{l}
\texttt{\#M[i]\`{ }subgroup;} \\
\texttt{C:=ConjugacyClasses(M[i]\`{ }subgroup);}\\
\texttt{for j:=1 to \#C do;} \\
\begin{tabular}{l}
\texttt{CC:=Coefficients (CharacteristicPolynomial(C[j][3]));} \\
\texttt{print C[j][1],"     ",\&+ [CC[j] : j in [1..\#CC]];} \\
\end{tabular} \\
\texttt{end for;} \\

\texttt{A:=MatrixAlgebra<GF(2), 8 | Generators(M[i]\`{ }subgroup)>;}\\
\texttt{MM:=RModule(A);}\\
\texttt{B:=CompositionFactors(MM);}\\
\texttt{B;}\\
\end{tabular} \\
\texttt{end for;} \\[10pt]
\end{tabular}

We remark that we are checking the fixed-point condition by simply summing the coefficients of the characteristic polynomial and checking if it is 0. This procedure can be iterated to check the maximal subgroups of the maximal subgroups, etc. In addition, we make use of the commands  \\

\begin{tabular}{l}
\texttt{L:=LowIndexSubgroups(G,<n,m>);} \\
\texttt{S:=Subgroups(G:OrderEqual:=N);} \\[10pt]
\end{tabular}

\noindent to create lists of subgroups whose index lies in the range $[n,m]$, or to enumerate all subgroups of order $N$, respectively.

\end{document}